    \newcommand{\be}{\begin{equation}}
    \newcommand{\ee}{\end{equation}}
    \newcommand{\nrm}[1]{\left\| #1 \right\|}
    \newcommand\dt {{\Delta t}}
    \def\x{\mbox{\boldmath $x$}}
	\newtheorem{thm}{Theorem}[section]
	\newtheorem{prop}[thm]{Proposition}
	\newtheorem{cor}[thm]{Corollary}
	\newtheorem{lem}[thm]{Lemma}
	\newtheorem{rem}[thm]{Remark}
	\newtheorem{defi}[thm]{Definition}
\begin{document}
	
	\title{A second order accurate scalar auxiliary variable (SAV) numerical method for the square phase field crystal equation}

	\author{
Min Wang \thanks{Faculty of Sciences, Beijing University of Technology, Beijing 100124, P. R. China (w\_min@emails.bjut.edu.cn)}
\and
Qiumei Huang \thanks{Faculty of Sciences, Beijing University of Technology, Beijing 100124, P. R. China (Corresponding Author: qmhuang@bjut.edu.cn)}
\and		
Cheng Wang\thanks{Department of Mathematics, University of Massachusetts, North Dartmouth, MA  02747, USA (cwang1@umassd.edu)}
}

	\maketitle
	\numberwithin{equation}{section}

	\begin{abstract}
In this paper we propose and analyze a second order accurate (in time) numerical scheme  for the square phase field crystal (SPFC) equation, a gradient flow modeling crystal dynamics at the atomic scale in space but on diffusive scales in time. Its primary difference with the standard phase field crystal model is an introduction of the 4-Laplacian term in the free energy potential, which in turn leads to a much higher degree of nonlinearity. To make the numerical scheme linear while preserving the nonlinear energy stability, we make use of the scalar auxiliary variable (SAV) approach, in which a second order backward differentiation formula (BDF) is applied in the temporal stencil. Meanwhile, a direct application of the SAV method faces certain difficulties, due to the involvement of the 4-Laplacian term, combined with a derivation of the lower bound of the nonlinear energy functional. In the proposed numerical method, an appropriate decomposition for the physical energy functional is formulated, so that the nonlinear energy part has a well-established global lower bound, and the rest terms lead to constant-coefficient diffusion terms with positive eigenvalues. In turn, the numerical scheme could be very efficiently implemented by constant-coefficient Poisson-like type solvers (via FFT), and energy stability is established by introducing an auxiliary variable, and an optimal rate convergence analysis is provided for the proposed SAV method. A few numerical experiments are also presented, which confirm the efficiency and accuracy of the proposed scheme.
	\end{abstract}
	
\noindent
{\bf Key words.} \, square phase field crystal equation, Fourier pseudo-spectral approximation, the scalar auxiliary variable (SAV) method, second order BDF stencil, energy stability, optimal rate convergence analysis
	
\medskip
	
\noindent
{\bf AMS Subject Classification} \, 35K30, 35K55, 65K10, 65M12, 65M70

%chen20a, chen20b, chen20c, cheng2019c, cheng2020a, fengW18a,  Hao2020, Meng2020
%ChengQ2020a, ChengQ2020b

	\section{Introduction}
	
The phase field crystal (PFC) equation, originally proposed in \cite{elder02}, stands for a new model to simulating crystal dynamics at the atomic scale in space but on diffusive scales in time. This model naturally incorporates elastic and plastic deformations, multiple crystal orientations and defects, and it has already been used to simulate a wide variety of microstructures, such as epitaxial thin film growth~\cite{elder04}, grain growth~\cite{stefanovic06}, eutectic solidification~\cite{elder07}, and dislocation formation and motion~\cite{stefanovic06}, etc.  Also see a related review~\cite{provatas07}. In more details, the phase variable describes a coarse-grained temporal average of the number density of atoms, which is related to dynamic density functional theory \cite{backofen07, marconi99}.  A significant advantage of this approach has been observed over other atomistic methods, such as molecular dynamics methods where the time steps are constrained by atomic-vibration time scales. In the PFC approach, the dimensionless energy is given by the following form~\cite{elder02, elder04, swift77}
	\begin{equation}
E_{\rm pfc}(\phi) = \int_\Omega \left\{ \frac{1}{4}\phi^4 +\frac{1-\varepsilon}{2}\phi^2-\left|\nabla\phi\right|^2 +\frac{1}{2} (\Delta\phi)^2 \right\} d{\bf x}  ,
  \quad  \varepsilon > 0 ,
	\label{energy-PFC}
	\end{equation}
where $\Omega\subset \mathbb{R}^D$, $D = 2$ or 3, $\phi:\Omega\rightarrow \mathbb{R}$ is the atom density field, and the parameter $\varepsilon$ represents a deviation from the melting temperature with $0< \varepsilon < 1$. % though it may be possible that $\varepsilon>1$.
For simplicity, a periodic boundary condition is imposed for $\phi$; the analysis for the homogeneous Neumann boundary condition case could be similarly extended. %Quite often, in the physics literature especially, the energy is rewritten as
%The two views of the energy allow us to analyze the convexity structure from different perspectives. In \eqref{energy-PFC}, we view the quadratic term $-\left|\nabla\phi\right|^2$ as destabilizing (concave), and $\frac{1-\varepsilon}{2}\phi^2$ (and all other terms) as stabilizing (convex). This view is valid only when $a:=1-\varepsilon$ is positive, which, as we have said, may be slightly restrictive.  By contrast, in \eqref{energy-PFC-alt-1}, we view $- \frac{\varepsilon}{2}\phi^2$ as destabilizing and all other terms as stabilizing.
In turn, the standard PFC equation becomes the associated $H^{-1}$ gradient flow:
	\[
\partial_t \phi = \Delta\mu, \quad \mu := \delta_\phi E_{\rm pfc} = \phi^3 + a \phi + 2 \Delta \phi + \Delta^2 \phi ,  \quad  a = 1- \varepsilon .
	\]
For $\varepsilon>0$, spatial oscillations could be observed in the solution of the PFC equation; typically in 2D, the peaks and valleys of $\phi$ are arranged in a hexagonal pattern. These solutions represent ``solid phase" solutions in the model. Meanwhile, ``liquid phase" solutions, which are spatially uniform and constant, may also be possible. In fact, these solutions even be in coexistence with the solid phase solutions to describe a crystal in equilibrium with its melt; see the related discussions in~\cite{provatas10}.

On the other hand, alternate lattice structures, such as ``square" symmetry crystal lattices, are possible in 2D solutions. As mentioned in \cite{elder04, golovin03}, a different choice of nonlinear term in the PFC model is needed to obtain a square symmetry crystal lattice rather than the usual hexagonal structure. In particular, such a symmetry can be obtained~\cite{golovin03} by replacing $\phi^4$ in (\ref{energy-PFC}) with $\left| \nabla \phi \right|^4$; also see~\cite{wu10} for a related method. This results in the following energy functional
	\begin{align}
E_{\rm spfc}(\phi) & = \int_\Omega\left\{ \frac{a}{2}\phi^2 +\frac{1}{4} \left| \nabla \phi \right|^4  -\left|\nabla\phi\right|^2 + \frac12 (\Delta\phi)^2 \right\} d{\bf x} .
	\label{energy-SPFC}
	\end{align}
In fact, there are essential similarities between this energy and the Aviles-Giga-type energy~\cite{aviles96}.  The square phase field crystal (SPFC) equation is given by the following dynamics
	\begin{equation}
	\partial_t \phi =  \Delta \mu \ ,   \quad  \mu := \delta_\phi E_{\rm spfc} =  - \nabla \cdot \left(  | \nabla \phi |^2  \nabla \phi \right) + a \phi + 2 \Delta \phi + \Delta^2 \phi .
  	\label{equation-SPFC}
	\end{equation}
We will  assume for simplicity that $a=1-\varepsilon>0$. %In this case, the energy will be bounded from below, a fact that we will prove later.
For the standard PFC model and its modified version, there have been extensive numerical works~\cite{baskaran13a, baskaran13b, dong18, hu09, wang10c, wang11a, wise09, zhang13}, etc.  %Of course, because of its generality, the new SAV approach of Shen \emph{et al.}~\cite{ChengQ2018, shen18a, shen19} could be applied to the PFC and SPFC problems.
In terms of the nonlinearity, the only difference between the standard PFC and SPFC equations is the replacement of $\phi^4$ by $| \nabla \phi |^4$ in the free energy functional, while the analysis and numerical approximation of the later one are much more challenging, especially when using pseudo-spectral approximations of spatial derivatives. Very limited numerical results have been available for the SPFC equation in the existing literature. For instance, some simulation results are reported for a closely related equation in~\cite{golovin03}. A modified backward differentiation formula (BDF) scheme was presented in a more recent work~\cite{cheng2019d}, in which the energy stability (in the original phase variable) and the convergence analysis have been theoretically justified.

Meanwhile, most existing works of energy stable schemes for a gradient flow containing $|\nabla \phi|^4$ energy potential are based on an implicit treatment of the 4-Laplacian part; see the related works~\cite{cheng2019d, fengW18a, feng2017preconditioned, fengW17c, shen12, wang10c}, etc. In particular, the preconditioned steepest descent (PSD) nonlinear iteration has been proposed in~\cite{feng2017preconditioned} for the 4-Laplacian solver in both the $L^2$ and $H^{-1}$ gradient flow, due to its convex structure, so that the computational cost is decomposed of certain Poisson-like solvers at each iteration stage. Extensive numerical experiments have implied that, approximately 10 to 15 iteration stages are needed for such a PSD algorithm in most practical numerical simulations of physical examples. As a result, the computational cost of implicit nonlinear 4-Laplacian solvers is approximately 10 to 15 times of a linear scheme for the corresponding physical system .

On the other hand, a theoretical justification of linear schemes for the gradient flows containing 4-Laplacian energy potential turns out to be a challenging issue. The scalar auxiliary variable (SAV) approach for various gradient flows has attracted more and more attentions in recent years~\cite{ChengQ2018, shen18b, shen18a, shen19}. To overcome the difficulty associated with the nonlinearity, the energy functional is split into two parts: a nonlinear energy functional with a uniform lower bound, combined with a quadratic surface diffusion energy with constant-coefficients. In turn, the elevated nonlinear energy part (which contains a global constant to make its value positive) is rewritten as a quadratic term, not in terms of the original physical variable, but in terms of an artificially-introduced auxiliary variable. As a result, linear schemes could be derived for the gradient flow reformulated in the quadratic nonlinear energy and the surface diffusion energy, so that both the unique solvability and modified energy stability could be theoretically justified for the linear schemes. Also notice that such an energy estimate is in terms of the reformulated energy functional, not in terms of the original energy functional.

However, a direct application of the SAV method to the SPFC equation faces certain technical difficulties. It is observed that, the concave diffusion energy $- \| \nabla \phi \|^2$ corresponds to a linear part in the chemical potential, while such a functional does not have a global lower bound. In addition, its combination with two quadratic convex energy parts, namely, $\frac{a}{2} \| \phi \|^2$ and $\frac12 \| \Delta \phi \|^2$, does not have a global lower bound, either. As a result, if the concave diffusion energy is placed into the linear diffusion energy part, the SAV method would not be effectively derived. In this article, we come up with an alternate split, which places the  concave diffusion energy $- \| \nabla \phi \|^2$ into the nonlinear energy functional part. In additional, a combination of the 4-Laplacian energy $\frac14 \| \nabla \phi \|_{L^4}^4$ and the concave energy $- \| \nabla \phi \|^2$ has a well-established global lower bound, $- | \Omega |$, so that the nonlinear energy part is well-defined, and the linear surface diffusion energy only contains two terms with positive eigenvalues. Based on such an energy split, the PDE system is reformulated, and the SAV scheme could be derived via the second order BDF2 temporal discretization. Similar to the epitaxial thin film growth and other related gradient flow models, an explicit extrapolation is applied to obtain a second order approximation to the nonlinear chemical potential and nonlinear energy functional value. The resulting numerical system could be very efficiently solved; only a few Poisson-like solvers, via the FFT-based algorithms, are needed at each time step, since only constant-coefficient equations are involved in the numerical scheme.

An unconditional energy stability could be proved via a careful estimate. Again, such a stability estimate is in terms of the reformulated energy functional, not in terms of the original energy functional. In the spatial discretization, we use Fourier pseudo-spectral approximation for its ability to capture more detailed structures with a reduced computational cost. Summation-by-parts formulas enable us to derive unique solvability and energy stability for the fully discrete numerical scheme. As a result of this discrete energy stability, a uniform-in-time discrete $H^2$ bound for the numerical solution becomes available. In addition to this uniform $H^2$ bound for the numerical solution (of the phase variable), a higher order $H^3 $ estimate could also be derived, with the help of various discrete Sobolev inequality in the Fourier pseudo-spectral space. With such an $H^3$ bound at hand, we are able to control a discrete gradient of the nonlinear chemical potential error function, in the Fourier pseudo-spectral space. %Such an embedding analysis cannot be derived from a straightforward calculation. %Instead, detailed discrete Fourier analyses, combined with certain non-trivial aliasing error estimates, are involved in the derivation.
In addition, one nonlinear error inner product could be cancelled between the error evolutionary equations for the original phase variable and the one for the introduced auxiliary variable. These preliminary estimates enable one to obtain an optimal rate ($O (\dt^2 + h^m)$) convergence analysis for the proposed numerical scheme in the energy norm, i.e., in the $\ell^\infty (0,T; H_N^2) \cap \ell^2 (0,T; H_N^5)$ norm. In particular, the aliasing error control techniques have to be applied in the nonlinear error estimate associated with the 4-Laplacian term.

 The outline of the paper is given as follows. In Section~\ref{sec:numerical scheme} we present the numerical scheme. First we review the Fourier pseudo-spectral approximation in space and recall an aliasing error control technique. Then we formulate the proposed numerical scheme, and prove its unique solvability. Subsequently, the energy stability analyses is provided in Section~\ref{sec:stability}, and an optimal rate convergence analysis is established in Section~\ref{sec:convergence}. Some numerical results are presented in Section~\ref{sec:numerical results}.  Finally, some concluding remarks are made in Section~\ref{sec:conclusion}.

	\section{The numerical scheme}
	\label{sec:numerical scheme}
	
\subsection{Review of Fourier pseudo-spectral approximations}

The Fourier pseudo-spectral method is also referred as the Fourier collocation spectral method. It is closely related to the Fourier spectral method, but complements the basis by an additional pseudo-spectral basis, which allows to represent functions on a quadrature grid. This simplifies the evaluation of certain operators, and can considerably speed up the calculation when using fast algorithms such as the fast Fourier transform (FFT); see the related descriptions in~\cite{Boyd2001, chen20a, chen20b, chen20c, cheng2015fourier, cheng2019c, cheng16b, cheng2020a, cheng16a, gottlieb12a, gottlieb12b, HGG2007, zhangC18a, zhangC17a}, etc.

To simplify the notation in our pseudo-spectral analysis, we assume that the domain is given by $\Omega = (0,1)^3$, $N_x = N_y = N_z =: N\in\mathbb{N}$ and $N \cdot h = 1$. We further assume that $N$ is odd:
	\[
N = 2K+1, \quad \mbox{for some} \ K\in\mathbb{N}.
	\]
The analyses for more general cases are a bit more tedious, but can be carried out without essential difficulty. The spatial variables are evaluated on the standard 3D numerical grid $\Omega_N$, which is defined by grid points $(x_i, y_j, z_k)$, with $x_i = i h$, $y_j=jh$, $z_k = k h$, $0 \le i , j, k \le 2K +1$. This description for three-dimensional mesh ($d=3$) can here and elsewhere be trivially modified for the two-dimensional case ($d=2$).

We define the grid function space
	\begin{equation}
\mathcal{G}_N := \left\{ f:\mathbb{Z}^3 \to \mathbb{R} \ \middle| \ f \ \mbox{is $\Omega_N$-periodic} \right\} .
	\end{equation}
Given any periodic grid functions $f,g\in\mathcal{G}_N$, the $\ell^2$ inner product and norm are defined as
	\begin{eqnarray}
 \left\langle f , g \right\rangle  := h^3 \sum_{i,j,k=0}^{N -1}   f_{i,j,k}\cdot g_{i,j,k} , \quad \left\| f \right\|_2 := \sqrt{ \left\langle f , f \right\rangle } .
	\label{spectral-coll-inner product-1}
	\end{eqnarray}
The zero-mean grid function subspace is denoted $\mathring{\mathcal{G}}_N := \left\{ f\in \mathcal{G}_N \ \middle| \  \langle f, 1\rangle =:  \overline{f}  = 0\right\}$.  For  $f\in \mathcal{G}_N$, we have the discrete Fourier expansion
	\begin{equation}
f_{i,j,k} = \sum_{\ell,m,n=-K}^{K} \hat{f}_{\ell,m,n}^N \exp \left( 2 \pi {\rm i} ( \ell x_i + m y_j + n z_k ) \right) ,
	\label{spectral-coll-1}
	\end{equation}
where the discrete Fourier coefficients are given by
	\begin{equation}
\hat{f}_{\ell,m,n}^N := h^3\sum_{i,j,k = 0}^{N-1} f_{i,j,k}\exp\left(-2\pi {\rm i} \left(\ell x_i + m x_j + n z_k \right)\right) .
	\end{equation}
The collocation Fourier spectral first and second order \emph{derivatives} of $f$ are defined as
	\begin{eqnarray}
{\cal D}_x f_{i,j,k} := \sum_{\ell,m,n = -K}^{K}  \left( 2 \pi {\rm i} \ell \right) \hat{f}_{\ell,m,n}^N  \exp \left( 2 \pi {\rm i} ( \ell x_i + m y_j + n z_k ) \right) ,
	\label{spectral-coll-2-1}
	\\
{\cal D}_x^2 f_{i,j,k} := \sum_{\ell,m,n = -K}^{K}   \left( - 4 \pi^2 \ell^2 \right) \hat{f}_{\ell,m,n}^N \exp \left( 2 \pi {\rm i} ( \ell x_i + m y_j + n z_k) \right) .
	\label{spectral-coll-2-3}
	\end{eqnarray}
The differentiation operators in the $y$ and $z$ directions, ${\cal D}_y$, ${\cal D}_y^2$, ${\cal D}_z$ and ${\cal D}_z^2$ can be defined in the same fashion. In turn, the discrete Laplacian, gradient and divergence operators are given by
	\begin{equation}
\Delta_N f :=  \left( {\cal D}_x^2  + {\cal D}_y^2 + {\cal D}_z^2 \right) f , \quad \nabla_N f := \left(
	\begin{array}{c}
{\cal D}_x f
	\\
{\cal D}_y f
	\\
{\cal D}_z f
	\end{array}
\right),  \quad  \nabla_N \cdot \left(
	\begin{array}{c}
f _1
	\\
f _2
	\\
f_3
	\end{array}
\right)  := {\cal D}_x f_1 + {\cal D}_y f_2 + {\cal D}_z f_3 ,
	\label{spectral-coll-3}
	\end{equation}
at the point-wise level. It is straightforward to verify that
	\begin{equation}
\nabla_N \cdot \nabla_N f = \Delta_N f .
	\label{spectral-coll-4-a}
	\end{equation}
See the derivations in the related references~\cite{Boyd2001, canuto82,
Gottlieb1977}.

	\begin{defi}
Suppose that the grid function $f\in\mathcal{G}_N$ has the discrete Fourier expansion (\ref{spectral-coll-1}).  Its spectral extension into the trigonometric polynomial space $\mathcal{P}_K$ (the space of trigonometric polynomials of degree at most $K$) is defined as
	\begin{equation}
f_S (x,y,z)  = \sum_{\ell,m,n=-K}^{K}   \hat{f}_{\ell,m,n}^N \exp \left( 2 \pi {\rm i} ( \ell x + m y + n z) \right) .
	\label{spectral-coll-projection-2}
	\end{equation}
We write $S_N(f) = f_S$ and denote $S_N:\mathcal{G}_N \to \mathcal{P}_K$ the spectral interpolation operator. Suppose $g\in C_{\rm per}(\Omega,\mathbb{R})$. We define the grid projection $Q_N: C_{\rm per}(\Omega,\mathbb{R})\to\mathcal{G}_N$ via
	\begin{equation}
Q_N(g)_{i,j,k} := g(x_i,y_j,z_k),
	\end{equation}
The resultant grid function may, of course, be expressed as a discrete Fourier expansion:
	\[
Q_N(g)_{i,j,k} = \sum_{\ell,m,n=-K}^{K}   \widehat{Q_N(g)}_{\ell,m,n}^N \exp \left( 2 \pi {\rm i} ( \ell x_i + m y_j + n z_k)\right) .
	\]
We define the de-aliasing operator $R_N :C_{\rm per}(\Omega,\mathbb{R}) \to \mathcal{P}_K$ via $R_N := S_N(Q_N)$. In other words,
	\begin{eqnarray}
R_N(g)(x,y,z)  = \sum_{\ell,m,n=-K}^{K}   \widehat{Q_N(g)}_{\ell,m,n}^N \exp \left( 2 \pi {\rm i} ( \ell x + m y + n z)\right) .
    \label{spectral-coll-projection-3}
	\end{eqnarray}
Finally, for any $g\in L^2(\Omega,\mathbb{R})$, we define the (standard) Fourier projection operator $P_N:L^2(\Omega,\mathbb{R}) \to {\mathcal P}_K$ via
	\[
P_N(g) (x,y,z) = \sum_{\ell,m,n = -K}^K \hat{g}_{\ell,m,n} \exp \left( 2 \pi {\rm i} ( \ell x + m y + n z)\right),
	\]
where
	\[
\hat{g}_{\ell,m,n} = \int_\Omega g(x,y,z) \exp\left(- 2\pi{\rm i} \left(\ell x+m y+n z \right)\right)  d\x,
	\]
are the (standard) Fourier coefficients.
	\end{defi}
	
%	\begin{rem}
%Note that, in general, for $g\in C_{\rm per}(\Omega,\mathbb{R})$, $P_N(g) \ne R_N(g)$, and, in particular,
%	\[
%\hat{g}_{\ell,m,n} \ne \widehat{Q_N(g)}_{\ell,m,n}^N .
%	\]
%However, if $g\in\mathcal{P}_K$ to begin with, then $\hat{g}_{\ell,m,n} = \widehat{Q_N(g)}_{\ell,m,n}^N$. In other words, $R_N :\mathcal{P}_K \to \mathcal{P}_K$ is the identity operator.
%	\end{rem}

To overcome a key difficulty associated with the $H^m$ bound of the nonlinear term  obtained by collocation interpolation, the following lemma is introduced. The case of $r=0$ was proven in earlier works~\cite{E92, E93}, and the case of $r \ge 1$ was analyzed in a recent article~\cite{gottlieb12b}.

	\begin{lem}
	\label{lemma:aliasing error}
Suppose that $m$ and $K$ are non-negative integers, and,  as before, assume that $N = 2K+1$. For any $\varphi \in {\cal P}_{mK}$ in $\mathbb{R}^d$, we have the estimate
	\begin{equation}
\left\| R_N (\varphi) \right\|_{H^r}  \le  m^{\frac{d}{2}}  \left\|  \varphi \right\|_{H^r},
	\label{spectral-coll-projection-4}
	\end{equation}
for any non-negative integer $r$.
	\end{lem}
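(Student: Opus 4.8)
The plan is to pass to the discrete Fourier side and to quantify precisely how the collocation operator $R_N = S_N \circ Q_N$ folds the high frequencies of $\varphi \in \mathcal{P}_{mK}$ back into the band $\{-K,\dots,K\}^d$. First I would write $\varphi$ as a trigonometric polynomial $\varphi(\x) = \sum_{\boldsymbol{\ell}} \hat{\varphi}_{\boldsymbol{\ell}} \exp(2\pi {\rm i}\, \boldsymbol{\ell}\cdot\x)$, with $\boldsymbol{\ell}$ ranging over the box $\{-mK,\dots,mK\}^d$, and observe that on the grid $\Omega_N$ the exponential $\exp(2\pi {\rm i}\, \boldsymbol{\ell}\cdot\x)$ depends only on $\boldsymbol{\ell}\bmod N$ in each component, since $Nh=1$. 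Computing the discrete Fourier coefficients of $Q_N(\varphi)$ and invoking the discrete orthogonality relation $h^d\sum_{i,\dots}\exp(2\pi{\rm i}(\boldsymbol{\ell}-\boldsymbol{\ell}')\cdot\x)=1$ exactly when $\boldsymbol{\ell}\equiv\boldsymbol{\ell}'\ (\mathrm{mod}\ N)$ componentwise (and $0$ otherwise), I obtain the aliasing identity
	\[
\widehat{R_N(\varphi)}_{\boldsymbol{\ell}'} = \sum_{\boldsymbol{\ell}\equiv\boldsymbol{\ell}'\ (\mathrm{mod}\ N)} \hat{\varphi}_{\boldsymbol{\ell}}, \qquad \boldsymbol{\ell}'\in\{-K,\dots,K\}^d,
	\]
the sum running over those frequencies $\boldsymbol{\ell}$ in the support of $\varphi$ congruent to $\boldsymbol{\ell}'$ in every coordinate. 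This structural fact is the heart of the argument.

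Next I would carry out the counting and a weight comparison. In each coordinate, the integers $\ell_j \in \{-mK,\dots,mK\}$ congruent to a fixed $\ell_j' \in \{-K,\dots,K\}$ modulo $N=2K+1$ form an arithmetic progression of common difference $N$; since the index window has length $2mK/(2K+1) < m$, at most $m$ such integers occur, so at most $m^d$ multi-indices $\boldsymbol{\ell}$ contribute to each $\boldsymbol{\ell}'$. Moreover, the representative $\ell_j'\in\{-K,\dots,K\}$ is the element of least magnitude in its residue class (if $\ell_j = \ell_j' + j N$ with $j\neq 0$ then $|\ell_j|\ge N-K = K+1 > |\ell_j'|$), whence $|\ell_j'|\le|\ell_j|$ for each $j$ and therefore $|\boldsymbol{\ell}'|\le|\boldsymbol{\ell}|$ componentwise. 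Since the $H^r$ Fourier weight $w_r(\boldsymbol{\ell})$ — whether taken as $(1+|\boldsymbol{\ell}|^2)^r$ or as the seminorm-type weight $\sum_{|\alpha|\le r}\prod_j(2\pi\ell_j)^{2\alpha_j}$ — is nondecreasing in each $|\ell_j|$, this yields $w_r(\boldsymbol{\ell}')\le w_r(\boldsymbol{\ell})$.

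With these two ingredients the estimate follows by Cauchy--Schwarz and reindexing. Using that each inner sum has at most $m^d$ terms, I would bound
	\[
\left\| R_N(\varphi) \right\|_{H^r}^2 = \sum_{\boldsymbol{\ell}'} w_r(\boldsymbol{\ell}')\Big| \sum_{\boldsymbol{\ell}\equiv\boldsymbol{\ell}'} \hat{\varphi}_{\boldsymbol{\ell}}\Big|^2 \le m^d \sum_{\boldsymbol{\ell}'} w_r(\boldsymbol{\ell}') \sum_{\boldsymbol{\ell}\equiv\boldsymbol{\ell}'} |\hat{\varphi}_{\boldsymbol{\ell}}|^2 .
	\]
Inserting $w_r(\boldsymbol{\ell}')\le w_r(\boldsymbol{\ell})$ and then collapsing the double sum over $(\boldsymbol{\ell}',\boldsymbol{\ell})$ into a single sum over all frequencies $\boldsymbol{\ell}$ of $\varphi$ (each $\boldsymbol{\ell}$ lies in exactly one residue class $\boldsymbol{\ell}'$) produces $\left\| R_N(\varphi) \right\|_{H^r}^2 \le m^d \left\| \varphi \right\|_{H^r}^2$, which is the claimed bound $\left\| R_N(\varphi) \right\|_{H^r}\le m^{d/2}\left\| \varphi \right\|_{H^r}$.

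I expect the main obstacle to be the careful bookkeeping of the aliasing identity together with the counting bound: one must verify that $R_N$ collapses exactly the frequencies differing by integer multiples of $N$ in each component, and that in the window $\{-mK,\dots,mK\}$ no residue class is populated more than $m$ times. The weight monotonicity and the Cauchy--Schwarz step are routine once these are secured; what is essential is that the folded frequency never exceeds the original in magnitude, so that the extra aliased contributions are dominated by the original high-frequency energy rather than amplified by it.
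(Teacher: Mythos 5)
Your proposal is correct, but note that the paper itself never proves Lemma~\ref{lemma:aliasing error}: it is stated as a known result, with the case $r=0$ attributed to earlier works \cite{E92, E93} and the case $r\ge 1$ to \cite{gottlieb12b}. What you have written is therefore a self-contained reconstruction of the argument that the paper outsources to the literature, and it is the standard one: (i) the aliasing identity $\widehat{R_N(\varphi)}_{\boldsymbol{\ell}'}=\sum_{\boldsymbol{\ell}\equiv\boldsymbol{\ell}'\,(\mathrm{mod}\,N)}\hat{\varphi}_{\boldsymbol{\ell}}$, which follows from the discrete orthogonality relation exactly as you say; (ii) the count that each residue class meets $\{-mK,\dots,mK\}$ in at most $\lfloor 2mK/N\rfloor+1\le m$ points, which is where the hypothesis $N=2K+1$ and the constant $m^{d/2}$ (rather than something worse) enter; (iii) the observation that the representative $\ell_j'\in\{-K,\dots,K\}$ has least magnitude in its class, since $|\ell_j'+jN|\ge N-K=K+1>|\ell_j'|$ for $j\neq 0$, so the $H^r$ weight can only decrease under folding; and (iv) Cauchy--Schwarz plus collapsing the double sum. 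Each of these steps is verified correctly in your write-up (your parenthetical ``window has length $2mK/(2K+1)<m$'' is a slight misstatement --- the window has length $2mK$ and it is the ratio to the gap $N$ that is $<m$ --- but the conclusion of at most $m$ terms per class is exactly right). Step (iii) is the one genuinely non-routine ingredient, and you correctly identify it as the reason aliased energy is dominated rather than amplified; without it, monotonicity of the weight could not be invoked and the estimate would fail for $r\ge 1$. The only thing your argument buys beyond the paper is transparency: the paper's reader must chase two references to see why the constant is $m^{d/2}$ and why the same constant works for every non-negative integer $r$, whereas your proof makes both facts visible in three lines of Fourier bookkeeping.
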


In addition, we introduce the discrete  fractional operator $(-\Delta_N)^\gamma$ (with $\gamma >0$):
	\begin{equation}
(-\Delta_N)^\gamma f_{i,j,k} := \sum_{\ell,m,n=-K}^{K} \lambda_{\ell,m,n}^\gamma \hat{f}_{\ell,m,n}^N  \exp \left( 2 \pi {\rm i} ( \ell x_i + m y_j + n z_k) \right) ,
 \, \, \lambda_{\ell, m, n} = 4 \pi^2 ( \ell^2 + m^2 + n^2 )  .
	\label{spectral-coll-4-b}
	\end{equation}
for a grid function $f$ with the discrete Fourier expansion as (\ref{spectral-coll-1}). Similarly, for a grid function $f\in\mathring{\mathcal{G}}_N$ of  (discrete) mean zero, a discrete version of the operator $(-\Delta)^{-\gamma}$ may be defined as
	\begin{equation}
(-\Delta_N)^{-\gamma} f_{i,j,k} := \sum_{\substack{\ell,m,n =-K \\
(\ell,m,n) \ne {\bf 0}}}^K  \lambda_{\ell,m,n}^{-\gamma} \hat{f}_{\ell,m,n}^N \exp \left( 2 \pi {\rm i} ( \ell x_i + m y_j + n z_k) \right).
	\label{spectral-coll-5}
	\end{equation}
We notice that the right hand side of~\eqref{spectral-coll-5} is a periodic grid function of zero mean, \emph{i.e}, $(-\Delta_N)^{-\gamma} f\in\mathring{\mathcal{G}}_N$. Furthermore, to facilitate the analysis in later sections, we introduce an operator $L_N$ as $L_N f := ( a I + \Delta_N^2 ) f$, for any  $f \in \mathcal{G}_N$. The following fractional operator is similarly defined:
	\begin{equation}
 L_N^\frac12 f_{i,j,k} := \sum_{\ell,m,n =-K}^K  \Big( a + \lambda_{\ell,m,n}^2 \Big)^\frac12 \hat{f}_{\ell,m,n}^N \exp \left( 2 \pi {\rm i} ( \ell x_i + m y_j + n z_k) \right) ,
	\label{spectral-coll-6}
	\end{equation}
based on the fact that, the Fourier eigenvalue of the operator $L_N$ (for the frequency mode $(\ell, m, n)$) is given by $a + \lambda_{\ell,m,n}^2$.

%Meanwhile, such a discrete $L^2$ inner product can also be viewed
%in the Fourier space other than in physical space,
%with the help of Parseval equality:
%\be
%   \left\langle f , g \right\rangle
%   =   \sum_{\ell,m=-N}^{N}
%   \hat{f}_{\ell,m}  \overline{\hat{g}_{\ell,m}}
%   =   \sum_{\ell,m=-[N/2]}^{N}
%   \hat{g}_{\ell,m}  \overline{\hat{f}_{\ell,m}}  ,
%   \label{spectral-coll-inner product-2}
%\ee
%in which $\hat{f}_{\ell,m}$, $\hat{g}_{\ell,m}$ are the
%Fourier coefficients of the grid functions $f$ and  $g$ in the
%expansion as in (\ref{spectral-coll-1}).
The following summation-by-parts formulas are valid (see the related discussions in~\cite{chen12, chen14,   gottlieb12a, gottlieb12b}): for any periodic grid functions $f,g\in\mathcal{G}_N$,
	\begin{equation}
\left\langle f ,  \Delta_N  g  \right\rangle  = - \left\langle \nabla_N f ,  \nabla_N g   \right\rangle  ,    \quad \left\langle f ,  \Delta_N^2  g  \right\rangle =  \left\langle \Delta_N f ,  \Delta_N g   \right\rangle ,   \quad \left\langle f ,  \Delta_N^3  g  \right\rangle =  - \left\langle \nabla_N \Delta_N f ,  \nabla_N \Delta_N g   \right\rangle .
	\label{spectral-coll-inner product-3}
	\end{equation}
Similarly, the following identity could be derived in the same manner:
\begin{eqnarray}
  \langle f, L_N g \rangle = \langle L_N^\frac12 f , L_N^\frac12 g \rangle , \quad
  \forall  f,g\in\mathcal{G}_N .
  \label{spectral-coll-inner product-4}
\end{eqnarray}
%Similarly, the following summation-by-parts formula is also available: for any   $\gamma \ge 0$,
%\begin{eqnarray}
%  \left\langle f ,  ( - \Delta_N )^\gamma g  \right\rangle
 % = \langle ( - \Delta_N )^{\frac{\gamma}{2}}  f , ( - \Delta_N )^{\frac{\gamma}{2}} g   \rangle  .
%  \label{spectral-coll-inner product-4}
%\end{eqnarray}
%We define $\| f \|_{H_N^\gamma} := \| ( - \Delta_N )^{\frac{\gamma}{2}}  f \|_2$.

Since the SPFC equation (\ref{equation-SPFC}) is an $H^{-1}$ gradient flow, we need  a discrete version of the norm $\| \cdot \|_{H^{-1}}$ defined on $\mathring{\mathcal G}_N$. For any $f, g \in \mathring{\mathcal G}_N$, we define
\begin{eqnarray}
  \langle f,  g \rangle_{-1,N} := \left\langle f ,  ( - \Delta_N )^{-1} g  \right\rangle
  = \left\langle ( - \Delta_N )^{-\frac12}  f , ( - \Delta_N )^{-\frac12} g   \right\rangle,
  \label{spectral-coll-inner product-6}
\end{eqnarray}
so that the $\| \cdot \|_{-1,N}$ norm could be introduced as
	\begin{equation}
\| f \|_{-1,N} := \sqrt{ \langle f , f \rangle_{-1,N} } = \|  ( - \Delta_N )^{-\frac12}  f  \|_2 .
	\label{spectral-coll-inner product-5}
	\end{equation}

%And also, we denote $\| \cdot \|$ as the standard $L^2$ norm for a continuous function.

In addition to the standard $\ell^2$ norm, we also introduce the $\ell^p$, $1\le p <\infty$, and $\ell^\infty$ norms for a grid function $f\in\mathcal{G}_N$:
\begin{equation}
 \nrm{f}_{\infty} := \max_{i,j,k} |f_{i,j,k}| ,   \qquad
 \nrm{f}_{p}  := \Bigl( h^3\sum_{i,j,k=0}^{N-1} |f_{i,j,k} |^p \Bigr)^{\frac{1}{p}} , \quad 1\leq p < \infty.  \label{spectral-defi-Lp}
\end{equation}
The discrete $H^1$ and $H^2$ norms are introduced as
\begin{equation}
  \| f \|_{H_N^1}^2 = \| f \|_2^2 + \| \nabla_N f \|_2^2 ,  \quad
  \| f \|_{H_N^2}^2 = \| f \|_{H_N^1}^2 + \| \Delta_N f \|_2^2 .
   \label{spectral-defi-Hm}
\end{equation}
For any periodic grid function $\phi\in\mathcal{G}_N$, the discrete SPFC energy is defined as
	\be
E_N (\phi) := \frac14 \| \nabla_N \phi \|_4^4 + \frac{a}{2} \| \phi \|_2^2 - \| \nabla_N \phi \|_2^2 + \frac12 \nrm{\Delta_N \phi}_2^2 .
	\label{energy-discrete-spectral}
	\ee
	
%The inequalities in the next lemma will play an important role in the convergence analysis.

The following result corresponds to a discrete Sobolev embedding from $H_N^2$ to $W_N^{1,6}$ in the pseudo-spectral space. Similar discrete embedding estimates, in the lower order ones, could be found in Lemma 2.1 of~\cite{cheng16a}; also see the related results \cite{feng2017preconditioned,  fengW17c} in the finite difference version. A direct calculation is not able to derive these inequalities; instead, a discrete Fourier analysis has to be applied in the derivation; the details of the proof has been provided in a recent work~\cite{cheng2019d}. .

	\begin{prop} \cite{cheng2019d}
\label{prop:embedding}
  For any periodic grid function $f$, we have
\begin{eqnarray}
  \| \nabla_N f \|_6 \le C \| \Delta_N f \|_2 ,  \quad
  \mbox{for some constant $C$ only dependent on $\Omega$} .
  \label{embedding-0}
\end{eqnarray}
	\end{prop}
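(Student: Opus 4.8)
The plan is to transfer the problem to the trigonometric interpolant $f_S = S_N(f)$, apply the continuous Sobolev embedding $H^1 \hookrightarrow L^6$ (valid for $\Omega = (0,1)^d$ with $d \le 3$), and isolate the single place where the discrete and continuous norms genuinely differ — namely the discrete $\ell^6$ norm of the gradient — controlling it through the aliasing estimate of Lemma~\ref{lemma:aliasing error}. First I would note that $\nabla_N f = \nabla_N (f - \bar f)$ and $\Delta_N f = \Delta_N(f - \bar f)$, so I may assume $f \in \mathring{\mathcal{G}}_N$. Writing $f_S \in \mathcal{P}_K$, each coordinate derivative $\mathcal{D}_\alpha f$ is the grid restriction of $\partial_\alpha f_S \in \mathcal{P}_K$, and since $\| \nabla_N f \|_6$ depends only on the pointwise Euclidean magnitude of the gradient, it suffices up to a dimensional constant to bound each $\| \mathcal{D}_\alpha f \|_6$.

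The crux is the second step. Setting $w := \mathcal{D}_\alpha f$, the pointwise cube $w^3$ is exactly the collocation of the degree-$3K$ trigonometric polynomial $(\partial_\alpha f_S)^3$, i.e.\ $w^3 = Q_N\big( (\partial_\alpha f_S)^3 \big)$. Because the $N$-point quadrature is exact for trigonometric polynomials of degree at most $2K$, the discrete $\ell^2$ norm of any grid function equals the $L^2$ norm of its spectral interpolant, whence
\[
\| w \|_6^3 = \| w^3 \|_2 = \| S_N(w^3) \|_{L^2} = \big\| R_N\big( (\partial_\alpha f_S)^3 \big) \big\|_{L^2},
\]
using $R_N = S_N \circ Q_N$. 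Here the discrete $\ell^6$ norm cannot be identified directly with a continuous $L^6$ norm, since the sixth power of a degree-$K$ polynomial has degree $6K > N-1$ and the quadrature is inexact; the reformulation as $\| w^3 \|_2$ converts this aliasing obstruction into a clean application of Lemma~\ref{lemma:aliasing error} with $m = 3$ and $r = 0$, giving
\[
\big\| R_N\big( (\partial_\alpha f_S)^3 \big) \big\|_{L^2} \le 3^{\frac{d}{2}} \big\| (\partial_\alpha f_S)^3 \big\|_{L^2} = 3^{\frac{d}{2}} \| \partial_\alpha f_S \|_{L^6}^3 .
\]
Taking cube roots yields $\| \mathcal{D}_\alpha f \|_6 \le 3^{\frac{d}{6}} \| \partial_\alpha f_S \|_{L^6}$, so the discrete gradient is dominated by the continuous $L^6$ norm of $\nabla f_S$.

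Third, I would close the estimate entirely on the continuous side. The embedding $\| \nabla f_S \|_{L^6} \le C \| \nabla f_S \|_{H^1}$, combined with the Fourier identity $\| \nabla \nabla f_S \|_{L^2} = \| \Delta f_S \|_{L^2}$ (valid for periodic functions since $\sum_{\beta,\gamma} \xi_\beta^2 \xi_\gamma^2 = |\xi|^4$) and the Poincar\'e-type bound $\| \nabla f_S \|_{L^2} \le C \| \Delta f_S \|_{L^2}$ for mean-zero $f_S$ (each nonzero frequency satisfies $|\xi| \ge 2\pi$), produces $\| \nabla f_S \|_{L^6} \le C \| \Delta f_S \|_{L^2}$. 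Finally, exactness of the quadrature at degree $2K$ gives $\| \Delta f_S \|_{L^2} = \| \Delta_N f \|_2$, and assembling the chain delivers $\| \nabla_N f \|_6 \le C \| \Delta_N f \|_2$ with $C$ depending only on $\Omega$.

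I expect the main obstacle to be precisely the second step: a naive quadrature identification of $\| \nabla_N f \|_6$ with $\| \nabla f_S \|_{L^6}$ fails because of aliasing in the sixth power, and the whole argument hinges on recognizing that rewriting $\| w \|_6^3 = \| w^3 \|_2$ lands one in an $\ell^2$ setting where the aliasing lemma quantifies the admissible growth of the norm under the fold-back of the high frequencies of $(\partial_\alpha f_S)^3$. The remaining ingredients — the Sobolev embedding, the Hessian–Laplacian Fourier identity, and the Poincar\'e inequality — are standard once the problem has been moved to the continuous interpolant.
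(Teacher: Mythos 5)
Your proof is correct, and it fills in a proof that the paper itself omits: the paper defers Proposition~\ref{prop:embedding} entirely to \cite{cheng2019d}, describing the argument only as ``a discrete Fourier analysis.'' Your argument is a complete, self-contained realization of exactly that approach --- pass to the spectral interpolant, rewrite the discrete $\ell^6$ norm as $\| w \|_6^3 = \| w^3 \|_2$ so that the quadrature/Parseval identity applies and Lemma~\ref{lemma:aliasing error} (with $m=3$, $r=0$) absorbs the aliasing of the cubed polynomial, then close on the continuous side with $H^1 \hookrightarrow L^6$ and the Fourier identities --- and it is the same device the paper itself deploys in its nonlinear estimates, e.g.\ in \eqref{H3 est-7-2}--\eqref{H3 est-7-4} and \eqref{pre est-3-1}--\eqref{pre est-3-2}.
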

	
The following discrete elliptic regularity estimate will be used in the later stability analysis; its proof will be provided in Appendix~\ref{proof:Prop 2.4}.

	\begin{prop}
\label{prop:elliptic regularity}
For any periodic grid function $f$, we have
\begin{eqnarray}
    \| \nabla_N \Delta_N f \|_2  \le \hat{C}_0 \| \Delta_N^3 f \|_2 ,  \quad
  \mbox{for some $\hat{C}_0$ only dependent on $\Omega$} .
  \label{elliptic regularity-0}
\end{eqnarray}
	\end{prop}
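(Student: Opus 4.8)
The plan is to pass to the discrete Fourier side and argue mode by mode. Writing $f$ in its discrete Fourier expansion~(\ref{spectral-coll-1}), I observe that both operators appearing in the statement act as Fourier multipliers: the gradient component ${\cal D}_x$ multiplies the mode $(\ell,m,n)$ by $2\pi {\rm i} \ell$ (and analogously for ${\cal D}_y$, ${\cal D}_z$), while $\Delta_N$ multiplies it by $-\lambda_{\ell,m,n}$, with $\lambda_{\ell,m,n} = 4\pi^2 (\ell^2+m^2+n^2)$ as in~(\ref{spectral-coll-4-b}). By the orthogonality of the discrete Fourier basis (Parseval's identity) I would then compute both sides explicitly,
\[
\| \nabla_N \Delta_N f \|_2^2 = \sum_{\ell,m,n=-K}^{K} \lambda_{\ell,m,n}^3 \, | \hat{f}_{\ell,m,n}^N |^2 , \qquad \| \Delta_N^3 f \|_2^2 = \sum_{\ell,m,n=-K}^{K} \lambda_{\ell,m,n}^6 \, | \hat{f}_{\ell,m,n}^N |^2 .
\]
The first identity comes from adding the three squared gradient components, $(4\pi^2\ell^2 + 4\pi^2 m^2 + 4\pi^2 n^2)\lambda_{\ell,m,n}^2 = \lambda_{\ell,m,n}^3$; equivalently it is the content of the third summation-by-parts formula in~(\ref{spectral-coll-inner product-3}), which yields $\| \nabla_N \Delta_N f \|_2^2 = - \langle f , \Delta_N^3 f \rangle$.

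Once both quantities are written as weighted $\ell^2$ sums of the \emph{same} coefficients, the claimed inequality reduces to a term-by-term comparison of the weights $\lambda^3$ and $\lambda^6$. The key observation is the spectral gap: for the constant mode $(\ell,m,n)=(0,0,0)$ one has $\lambda_{\ell,m,n}=0$, so both summands vanish and this mode is harmless; for every nonzero mode, $\lambda_{\ell,m,n} \ge \lambda_{\min} := 4\pi^2$, the smallest nonzero Laplacian eigenvalue on $\Omega=(0,1)^3$. Hence $\lambda^6 = \lambda^3 \cdot \lambda^3 \ge \lambda_{\min}^3 \, \lambda^3$ for every mode (trivially at the zero mode), so that $\lambda^3 |\hat{f}|^2 \le \lambda_{\min}^{-3} \, \lambda^6 |\hat{f}|^2$ term by term. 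Summing over all modes gives $\| \nabla_N \Delta_N f \|_2^2 \le \lambda_{\min}^{-3} \| \Delta_N^3 f \|_2^2$, so one may take $\hat{C}_0 = \lambda_{\min}^{-3/2} = (4\pi^2)^{-3/2}$.

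I expect no serious obstacle here: the statement is a discrete Poincar\'e-type (spectral gap) estimate in disguise. The only point requiring care is the constant mode, where $\lambda=0$ forbids a naive division of the two weighted sums, but since both sides vanish there it causes no trouble. It is worth stressing that $\hat{C}_0$ depends only on $\lambda_{\min}$, hence only on the domain $\Omega$, and in particular is \emph{independent of the mesh parameter} $N$: the smallest nonzero frequency is always realized by a mode such as $(\pm 1, 0, 0)$ as soon as $K \ge 1$ (while for $K=0$ the grid function is constant and the inequality is trivial). This uniformity in $N$ is precisely the feature needed for the later stability analysis.
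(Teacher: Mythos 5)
Your proof is correct and follows essentially the same route as the paper: both pass to the discrete Fourier side, use Parseval's identity to express $\| \nabla_N \Delta_N f \|_2^2$ and $\| \Delta_N^3 f \|_2^2$ as weighted sums over the same coefficients, and compare the weights mode by mode via the spectral gap $\lambda_{\ell,m,n} \ge 4\pi^2$ on nonzero modes (the zero mode vanishing on both sides). The only cosmetic difference is that the paper first extends $f$ to the trigonometric polynomial $f_N$ and then cites continuous elliptic regularity for this comparison, whereas you carry out the term-by-term estimate directly, which additionally yields the explicit, $N$-independent constant $\hat{C}_0 = (4\pi^2)^{-3/2}$.
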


\subsection{The fully discrete numerical scheme}

The SPFC energy~\eqref{energy-SPFC} is decomposed into two parts:
\begin{eqnarray}
  E_{\rm spfc} (\phi) = E_1 (\phi) + \frac12 ( \phi , L \phi ) ,  \quad
  E_1 (\phi) = \int_\Omega\left\{ \frac14 | \nabla \phi |^4  - | \nabla \phi |^2 + 2 \right\} d{\bf x} ,  \, \, \, L \phi = a \phi + \Delta^2 \phi .  \label{energy-SPFC-split-1}
\end{eqnarray}
In particular, due to the point-wise quadratic inequality
\begin{eqnarray}
    \frac14 | \nabla \phi |^4  - | \nabla \phi |^2 + 1 \ge 0 ,   \label{energy-SPFC-split-2-1}
\end{eqnarray}
we conclude that $E_1 (\phi)$ have a well-established lower bound:
\begin{eqnarray}
  E_1 (\phi) \ge | \Omega | . \label{energy-SPFC-split-2-2}
\end{eqnarray}
In turn, the nonlinear chemical potential becomes
\begin{eqnarray}
  N (\phi) := \delta_\phi \delta E_1
  = - \nabla \cdot ( | \nabla \phi |^2 \nabla \phi )  + 2 \Delta \phi .
  \label{nonlinear chem pot-1}
\end{eqnarray}

Therefore, with an introduction of a scalar auxiliary variable
\begin{equation}
  r := \sqrt{E_1 (\phi)} ,  \label{SAV-0}
\end{equation}
the original SPFC equation~\eqref{equation-SPFC} could be rewritten as the following system:
 \begin{eqnarray}\label{SAV system-1}
 \begin{cases}
 \phi_t = \Delta \Big( \frac{r}{\sqrt{E_{1}(\phi)}} N(\phi) + L \phi \Big) , \\
 r_t  = \frac{1}{2\sqrt{E_{1}(\phi)}} \int_\Omega \, N(\phi) \phi_t \, d {\bf x}  .
 \end{cases}
\end{eqnarray}
Based on this reformulation, the fully discrete second order SAV scheme is proposed as follows, with Fourier pseudo-spectral spatial approximation:
\begin{eqnarray} \label{scheme-SAV-1}
\begin{cases}
\frac{\frac32 \phi^{n+1} - 2 \phi^n +\frac12 \phi^{n-1}}{\dt} = \Delta_N \Big( \frac{r^{n+1}}{\sqrt{E_{1,N}(\hat{\phi}^{n+1})}} N_N (\hat{\phi}^{n+1}) + L_N \phi^{n+1} \Big), \ \ \ \ \ \ ({\theequation a}) \\
\frac{\frac32 r^{n+1} - 2 r^n + \frac12 r^{n-1}}{\dt} = \frac{1}{2\sqrt{E_{1,N}(\hat{\phi}^{n+1})}} \langle N_N ( \hat{\phi}^{n+1}) , \frac{\frac32 \phi^{n+1} - 2 \phi^n + \frac12 \phi^{n-1}}{\dt} \rangle,  \ \ \ \ \ \ ({\theequation b})
\end{cases}
\end{eqnarray}
in which $N_N (\phi) := -\nabla_N \cdot ( | \nabla_N \phi |^2 \nabla_N \phi) + 2 \Delta_N \phi$, $L_N \phi = a \phi + \Delta_N^2 \phi$, and a second order explicit extrapolation is applied to obtain $\hat{\phi}^{n+1} = 2\phi^{n}-\phi^{n-1}$. The discrete nonlinear energy functional is introduced as $E_{1,N} (\phi) := \frac14 \| \nabla_N \phi \|_4^4  - \| \nabla_N \phi \|_2^2 + 2 | \Omega |$, similar to the notation in~\eqref{energy-discrete-spectral}.

Since~\eqref{scheme-SAV-1} is a two-step numerical method, a ``ghost" point extrapolation for $\phi^{-1}$ is useful. To preserve the second order accuracy in time, we apply the following approximation:
	\begin{equation}
  \phi^{-1} = \phi^0 - \dt \Delta_N \mu^0, \quad  \mu^0 :=
  - \nabla_N \cdot ( | \nabla_N \phi^0 |^2 \nabla_N \phi^0 )
   + a \phi^0 + 2 \Delta_N \phi^0 + \Delta_N^2 \phi^0 .  \label{scheme-SAV-initial-1}
	\end{equation}
A careful Taylor expansion indicates an $O (\dt^2 + h^m)$ accuracy for such an approximation:
\begin{eqnarray}
  \| \phi^{-1} - \Phi^{-1} \|_2 \le C (\dt^2 + h^m) ,  \quad
  \mbox{in which $\Phi$ is the exact solution for~\eqref{equation-SPFC} }.
   \label{scheme-SAV-initial-2}
\end{eqnarray}
In turn, we take $r^0 := \sqrt{E_{1,N} (\phi^0)}$, $r^{-1} := \sqrt{E_{1,N} (\phi^{-1}) }$

\subsection{Unique solvability and efficient numerical solver for the proposed scheme}

In this section we analyze the unique solvability of the proposed SAV scheme~\eqref{scheme-SAV-1}. From (\ref{scheme-SAV-1}a), one can get
\begin{eqnarray}  \label{solver-1}
\Big( \frac32 I - \dt \Delta_N L_N \Big) \phi^{n+1} = \dt \Delta_N \Big( \frac{r^{n+1}}{\sqrt{E_{1,N}(\hat{\phi}^{n+1})}} N_N (\hat{\phi}^{n+1}) \Big) + 2 \phi^n - \frac12 \phi^{n-1} .
\end{eqnarray}
Define $A_N =\frac32 I - \dt \Delta_N L_N$, so that the following identity is valid:
$$
  \phi^{n+1} = \dt \frac{r^{n+1}}{\sqrt{E_{1,N} (\hat{\phi}^{n+1}) }}
  A_N^{-1} \Delta_N N_N (\hat{\phi}^{n+1}) + A_N^{-1} ( 2 \phi^n - \frac12 \phi^{n-1} ) .
$$
From (\ref{scheme-SAV-1}b), we see that
\begin{eqnarray} \label{solver-2}
r^{n+1} = \frac43 r^n - \frac13 r^{n-1} + \frac{1}{3 \sqrt{E_{1,N} (\hat{\phi}^{n+1})} } \langle N_N (\hat{\phi}^{n+1}) , \frac32 \phi^{n+1} -2 \phi^n + \frac12 \phi^{n-1} \rangle .
\end{eqnarray}
A substitution of (\ref{solver-2}) into (\ref{solver-1}) gives
\begin{eqnarray*}
\nonumber&& \Big( \frac32 I - \dt \Delta_N L_N \Big) \phi^{n+1}
- \frac{\Delta_N N_N (\hat{\phi}^{n+1})}{2 E_{1,N}(\hat{\phi}^{n+1})} \dt \langle N_N (\hat{\phi}^{n+1}) , \phi^{n+1} \rangle \\
&=&\frac{\dt \Delta_N N_N (\hat{\phi}^{n+1})}{\sqrt{E_{1,N} (\hat{\phi}^{n+1})}}
 \Big(\frac43 r^n - \frac13 r^{n-1} + \frac{1}{3 \sqrt{E_{1,N}(\hat{\phi}^{n+1})}} \langle N_N (\hat{\phi}^{n+1}) , -2 \phi^n+ \frac12 \phi^{n-1} \rangle \Big)
 + 2 \phi^n - \frac12 \phi^{n-1} .
\end{eqnarray*}
Let $g_N^n$ denotes the right-hand of the above equation, then it becomes
\begin{eqnarray*}
A_N \phi^{n+1} - \frac{\Delta_N N_N (\hat{\phi}^{n+1})}{2 E_{1,N}(\hat{\phi}^{n+1}) }
 \dt  \langle N_N (\hat{\phi}^{n+1}) , \phi^{n+1} \rangle = g_N^n .
\end{eqnarray*}
Multiplying both sides by $A_N^{-1}$ implies that
 \begin{eqnarray}\label{solver-3}
 \phi^{n+1} - \frac{1}{2 E_{1,N} (\hat{\phi}^{n+1})} \dt
 \langle N_N (\hat{\phi}^{n+1}) , \phi^{n+1} \rangle
   \cdot A_N^{-1} \Delta_N  N_N (\hat{\phi}^{n+1}) = A_N^{-1} g_N^n .
 \end{eqnarray}
 Denote $LHS =  \langle N_N (\hat{\phi}^{n+1}) , \phi^{n+1} \rangle$, a scalar value.  Taking a discrete inner product with~\eqref{solver-3} by $N_N (\hat{\phi}^{n+1})$ leads to
$$
   \langle N_N (\hat{\phi}^{n+1}) , \phi^{n+1} \rangle
 - \frac{\dt}{2 E_{1,N} (\hat{\phi}^{n+1})} \cdot LHS \cdot
  \langle N_N (\hat{\phi}^{n+1}) , A_N^{-1} \Delta _N N_N (\hat{\phi}^{n+1}) \rangle
  = \langle N_N (\hat{\phi}^{n+1}) , A_N^{-1} g_N^n \rangle .
$$
 Then we arrive at
 \begin{eqnarray} \label{solver-4}
 \Big( 1 - \frac{\dt}{2 E_{1,N} (\hat{\phi}^{n+1})} \cdot
  \langle N_N (\hat{\phi}^{n+1}) , A_N^{-1} \Delta_N N_N (\hat{\phi}^{n+1}) \rangle \Big)
  \cdot LHS = \langle N_N (\hat{\phi}^{n+1}) , A_N^{-1} g_N^n \rangle .
\end{eqnarray}
In addition, we notice that
\begin{equation}  \label{solver-5}
   \langle N_N (\hat{\phi}^{n+1}) , A_N^{-1} \Delta_N N_N (\hat{\phi}^{n+1}))
   \le 0 ,
\end{equation}
since all the eigenvalues of the symmetric operator $A_N^{-1} \Delta_N$ are non-positive.
As a direct consequence, the coefficient on the left hand side of~\eqref{solver-4} is positive, so that the value of $LHS$ is uniquely solvable. Going back~\eqref{solver-3}, the numerical solution $\phi^{n+1}$ is uniquely determined:
\begin{equation}
  \phi^{n+1} = \frac{\dt}{2 E_{1,N} (\hat{\phi}^{n+1})} \cdot LHS
  \cdot A_N^{-1} \Delta_N N_N (\hat{\phi}^{n+1}) + A_N^{-1} g_N^n .
  \label{solver-6}
\end{equation}
Furthermore, a substitution of $\phi^{n+1}$ into (\ref{solver-2}) gives the numerical value of $r^{n+1}$.

	\begin{thm}
	\label{SPFC solvability}
 Given $\phi^n, \phi^{n-1} \in \mathcal{G}_N$, two scalar values $r^n$, $r^{n-1}$, with $\overline{\phi^n} = \overline{\phi^{n-1}}$, there exists a unique solution $\phi^{n+1} \in \mathcal{G}_N$ for the numerical schemes~\eqref{scheme-SAV-1}. The  scheme is mass conservative, i.e., $\overline{\phi^k} \equiv \overline{\phi^0} := \beta_0$, for any $k \ge 0$,
 provided that $\overline{\phi^{-1}} = \overline{\phi^0} = \beta_0$.
	\end{thm}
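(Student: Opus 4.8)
The plan is to confirm and complete the reduction already set up in the derivation of~\eqref{solver-6}. First I would observe that the coupled two-stage system~\eqref{scheme-SAV-1} decouples cleanly: the auxiliary scalar $r^{n+1}$ enters (\ref{scheme-SAV-1}a) only linearly and is given explicitly by~\eqref{solver-2}, so substituting~\eqref{solver-2} into (\ref{scheme-SAV-1}a) produces the single equation~\eqref{solver-3} for $\phi^{n+1}$. The only genuinely unknown scalar remaining is $LHS := \langle N_N(\hat\phi^{n+1}), \phi^{n+1}\rangle$. Taking the discrete inner product of~\eqref{solver-3} with the (known) grid function $N_N(\hat\phi^{n+1})$ then collapses the whole problem to the scalar linear equation~\eqref{solver-4} in the single unknown $LHS$.

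The heart of the argument, and the step I expect to be the main (if not severe) obstacle, is to verify that the coefficient multiplying $LHS$ in~\eqref{solver-4} is strictly positive; equivalently, that inequality~\eqref{solver-5} holds. I would establish this through the discrete Fourier diagonalization. For each frequency mode $(\ell,m,n)$ the operators $\Delta_N$, $L_N = aI + \Delta_N^2$, and hence $A_N = \frac32 I - \dt\Delta_N L_N$, are all simultaneously diagonal, with respective eigenvalues $-\lambda_{\ell,m,n}$, $a+\lambda_{\ell,m,n}^2$, and $\frac32 + \dt\lambda_{\ell,m,n}(a+\lambda_{\ell,m,n}^2)$, where $\lambda_{\ell,m,n} = 4\pi^2(\ell^2+m^2+n^2) \ge 0$ as in~\eqref{spectral-coll-4-b} and $a>0$ by assumption. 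Thus $a+\lambda_{\ell,m,n}^2 > 0$ and $\frac32 + \dt\lambda_{\ell,m,n}(a+\lambda_{\ell,m,n}^2) \ge \frac32 > 0$, so $A_N$ is symmetric positive definite, hence invertible, and $A_N^{-1}\Delta_N$ is symmetric with eigenvalue $-\lambda_{\ell,m,n}/(\frac32+\dt\lambda_{\ell,m,n}(a+\lambda_{\ell,m,n}^2)) \le 0$ in each mode. This yields~\eqref{solver-5}. Since $\dt>0$ and $E_{1,N}(\hat\phi^{n+1}) > 0$ (indeed $E_{1,N} \ge |\Omega|$, from the pointwise inequality~\eqref{energy-SPFC-split-2-1}), the coefficient in~\eqref{solver-4} equals $1$ minus a nonpositive quantity, hence is at least $1$. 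Therefore $LHS$ is uniquely determined; substituting it back into~\eqref{solver-6} yields a unique $\phi^{n+1}$, and then~\eqref{solver-2} yields a unique $r^{n+1}$, establishing existence and uniqueness.

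For the mass conservation statement, I would take the discrete inner product of (\ref{scheme-SAV-1}a) with the constant grid function $1$. Its right-hand side has the form $\Delta_N(\cdot)$, and by the summation-by-parts identity~\eqref{spectral-coll-inner product-3} one has $\langle \Delta_N g, 1\rangle = -\langle \nabla_N g, \nabla_N 1\rangle = 0$ for every $g\in\mathcal{G}_N$, since $\nabla_N 1 = 0$. Hence the entire right-hand side vanishes, leaving the BDF2 relation $\frac32\overline{\phi^{n+1}} - 2\overline{\phi^n} + \frac12\overline{\phi^{n-1}} = 0$ among the discrete means. Using the hypothesis $\overline{\phi^n} = \overline{\phi^{n-1}} = \beta_0$ gives $\frac32\overline{\phi^{n+1}} = 2\beta_0 - \frac12\beta_0 = \frac32\beta_0$, so $\overline{\phi^{n+1}} = \beta_0$. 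A straightforward induction starting from the initialization $\overline{\phi^{-1}} = \overline{\phi^0} = \beta_0$ then propagates $\overline{\phi^k} \equiv \beta_0$ for all $k \ge 0$, which completes the proof.
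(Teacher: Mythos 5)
Your proposal is correct and follows essentially the same route as the paper: reduce to the scalar equation~\eqref{solver-4} via the substitution of~\eqref{solver-2} into (\ref{scheme-SAV-1}a), verify the sign condition~\eqref{solver-5} so the coefficient of $LHS$ is positive, recover $\phi^{n+1}$ and $r^{n+1}$, and obtain mass conservation from $\overline{\Delta_N f}=0$ plus induction. The only difference is that you spell out the Fourier diagonalization behind the non-positivity of the eigenvalues of $A_N^{-1}\Delta_N$, which the paper simply asserts; this is a welcome (and correct) filling-in of detail rather than a different argument.
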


\begin{proof}
  The unique solvability comes from the derived identities~\eqref{solver-2}, \eqref{solver-4} and~\eqref{solver-6}. In addition, the mass conservation property is a direct consequence of a summation of (\ref{scheme-SAV-1}a) over $\Omega$, which is turn leads to
\begin{eqnarray}
   \overline{\phi^{n+1}} =  \frac43 \overline{\phi^n} - \frac13 \overline{\phi^{n-1}}
   + \frac23 \overline{\Delta_N \Big( \frac{r^{n+1}}{\sqrt{E_{1,N}(\hat{\phi}^{n+1})}} N_N (\hat{\phi}^{n+1}) + L_N \phi^{n+1} \Big)}
   =  \frac43 \overline{\phi^n} - \frac13 \overline{\phi^{n-1}} ,
\end{eqnarray}
with the fact that $\overline{\Delta_N f}=0$, $\forall f \in  \mathcal{G}_N$, has been applied. An application of induction implies that $\overline{\phi^k} = \beta_0$, for any $k \ge 0$,
 provided that $\overline{\phi^{-1}} = \overline{\phi^0} = \beta_0$. This completes the proof of Theorem~\ref{SPFC solvability}.
\end{proof}

	\section{Unconditional energy stability and the uniform $H^3$ estimate}
	\label{sec:stability}

\subsection{Modified energy stability for the proposed numerical scheme}

	\begin{thm} \label{SPFC-energy stability}
For $k \ge 1$, define the discrete modified energy
	\begin{equation}
\mathcal{E}_{N} (\phi^{k+1}, \phi^k, r^{k+1}, r^k) :=
 \frac14 ( \| L_N^\frac12 \phi^{k+1} \|_2^2 + \| L_N^\frac12 ( 2  \phi^{k+1} - \phi^k ) \|_2^2 )
 + \frac12 ( | r^{k+1} |^2 + | 2 r^{k+1} - r^k |^2 ) .
	\label{discrete energy}
	\end{equation}
Solution of the numerical scheme~\eqref{scheme-SAV-1} satisfies the following dissipation properties
	\begin{equation}
\mathcal{E}_{N} ( \phi^{k+1}, \phi^k, r^{k+1}, r^k)
\le \mathcal{E}_{N} ( \phi^k, \phi^{k-1}, r^k, r^{k-1} ) .
	\label{SPFC-eng stab-est}
	\end{equation}
	\end{thm}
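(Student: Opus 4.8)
The plan is to test the phase update (\ref{scheme-SAV-1}a) with the discrete chemical potential, exploit the summation-by-parts structure of the $H^{-1}$ gradient flow to extract a dissipation term, and absorb the troublesome nonlinear contribution through the scalar update (\ref{scheme-SAV-1}b). I would denote the bracketed chemical potential by $\mu^{n+1} := \frac{r^{n+1}}{\sqrt{E_{1,N}(\hat\phi^{n+1})}} N_N(\hat\phi^{n+1}) + L_N \phi^{n+1}$, so that, after multiplying by $\dt$, equation (\ref{scheme-SAV-1}a) reads $\frac32\phi^{n+1}-2\phi^n+\frac12\phi^{n-1} = \dt\,\Delta_N\mu^{n+1}$. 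Taking the discrete inner product with $\mu^{n+1}$ and invoking the summation-by-parts identity \eqref{spectral-coll-inner product-3} yields the right-hand side $\dt\langle \Delta_N\mu^{n+1},\mu^{n+1}\rangle = -\dt\,\| \nabla_N\mu^{n+1} \|_2^2 \le 0$, while the left-hand side splits into a nonlinear piece carrying $N_N(\hat\phi^{n+1})$ and a linear piece carrying $L_N\phi^{n+1}$.

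For the nonlinear piece, the factor $\frac{r^{n+1}}{\sqrt{E_{1,N}(\hat\phi^{n+1})}}$ multiplies $\langle \frac32\phi^{n+1}-2\phi^n+\frac12\phi^{n-1}, N_N(\hat\phi^{n+1})\rangle$, which equals $2\sqrt{E_{1,N}(\hat\phi^{n+1})}\,(\frac32 r^{n+1}-2r^n+\frac12 r^{n-1})$ precisely by the scalar update (\ref{scheme-SAV-1}b). The radicals cancel exactly, leaving the clean scalar quantity $2 r^{n+1}(\frac32 r^{n+1}-2r^n+\frac12 r^{n-1})$; this cancellation is the heart of the SAV mechanism, in that the highly nonlinear $4$-Laplacian contribution is never estimated directly but is transferred onto the auxiliary variable. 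For the linear piece, the identity \eqref{spectral-coll-inner product-4} lets me write $\langle \frac32\phi^{n+1}-2\phi^n+\frac12\phi^{n-1}, L_N\phi^{n+1}\rangle = \langle \frac32 \psi^{n+1}-2\psi^n+\frac12\psi^{n-1},\psi^{n+1}\rangle$ with $\psi^k := L_N^{\frac12}\phi^k$, reducing it to a quadratic form in $L_N^{\frac12}\phi$.

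The key algebraic tool is the BDF2 ``$G$-stability'' telescoping identity, valid for any sequence $a^k$ in an inner-product space:
\begin{equation*}
\left( \tfrac32 a^{n+1} - 2 a^n + \tfrac12 a^{n-1}, a^{n+1} \right)
= \tfrac14 \left( \| a^{n+1} \|^2 - \| a^n \|^2 + \| 2 a^{n+1} - a^n \|^2 - \| 2 a^n - a^{n-1} \|^2 \right) + \tfrac14 \| a^{n+1} - 2 a^n + a^{n-1} \|^2 ,
\end{equation*}
which I would verify once by direct expansion. Applying it with $a^k = \psi^k = L_N^{\frac12}\phi^k$, and (after multiplying by $2$) with the scalars $a^k = r^k$, converts both pieces into exact differences of the modified energy \eqref{discrete energy}, plus the nonnegative remainders $\frac14 \| L_N^{\frac12}(\phi^{n+1}-2\phi^n+\phi^{n-1}) \|_2^2$ and $\frac12 | r^{n+1}-2r^n+r^{n-1} |^2$. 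Collecting all terms gives
\begin{equation*}
\mathcal{E}_N(\phi^{n+1},\phi^n,r^{n+1},r^n) - \mathcal{E}_N(\phi^n,\phi^{n-1},r^n,r^{n-1}) = -\dt\, \| \nabla_N\mu^{n+1} \|_2^2 - \tfrac14 \| L_N^{\frac12}(\phi^{n+1}-2\phi^n+\phi^{n-1}) \|_2^2 - \tfrac12 | r^{n+1}-2r^n+r^{n-1} |^2 ,
\end{equation*}
and since every term on the right is nonpositive, the claimed dissipation \eqref{SPFC-eng stab-est} follows with $k=n$.

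I do not anticipate a genuine analytical obstacle, since unconditional stability is built into the SAV construction; the only points requiring care are the exact cancellation of the radical factors between the two update equations, and the index bookkeeping in the $G$-stability identity, namely that the ``time-$(n{+}1)$'' brackets $\frac14(\| L_N^{\frac12}\phi^{n+1} \|_2^2 + \| L_N^{\frac12}(2\phi^{n+1}-\phi^n) \|_2^2)$ together with $\frac12(| r^{n+1} |^2 + | 2r^{n+1}-r^n |^2)$ reproduce exactly $\mathcal{E}_N$ as defined in \eqref{discrete energy}. The nonnegativity of the two quadratic remainders is immediate, and the nonpositivity of the dissipation term is just the summation-by-parts formula \eqref{spectral-coll-inner product-3}.
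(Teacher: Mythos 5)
Your proof is correct and follows essentially the same route as the paper: testing the phase update with the chemical potential $\mu^{n+1}$, cancelling the nonlinear term against the scalar update (the paper organizes this as three separate inner products added together, which is algebraically identical to your direct substitution), and then applying the same BDF2 $G$-stability identity together with $\langle f, L_N g\rangle = \langle L_N^{1/2} f, L_N^{1/2} g\rangle$. The resulting dissipation identity, with the three nonpositive terms $-\dt\|\nabla_N \mu^{n+1}\|_2^2$, $-\frac14\|L_N^{1/2}(\phi^{n+1}-2\phi^n+\phi^{n-1})\|_2^2$, and $-\frac12|r^{n+1}-2r^n+r^{n-1}|^2$, matches the paper's equation exactly.
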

	
%\begin{thm}
%The scheme (\ref{1.7}) is unconditionally energy stable in the sense that
%\begin{eqnarray*}
%&\ &\frac{1}{4\Delta t}(\parallel L^{\frac{1}{2}}\phi^{n+1}\parallel^{2}-\parallel L^{\frac{1}{2}}\phi^{n}\parallel^{2}+\parallel L^{\frac{1}{2}}(2\phi^{n+1}-\phi^{n})\parallel^{2}-\parallel L^{\frac{1}{2}}(2\phi^{n}-\phi^{n-1})\parallel^{2}\\
%&+&\parallel L^{\frac{1}{2}}(\phi^{n+1}-2\phi^{n}+\phi^{n-1})\parallel^{2})+\frac{1}{2\Delta t}(\mid r^{n+1}\mid^{2}-\mid r^{n}\mid^{2}+\mid2r^{n+1}-r^{n}\mid^{2}\\
%&-&\mid2r^{n}-r^{n-1}\mid^{2}+\mid r^{n+1}-2r^{n}+r^{n-1})\mid^{2} = -\parallel\nabla\mu^{n+1}\parallel^{2}.
%\end{eqnarray*}
%\end{thm}

\begin{proof}
We begin with a rewritten form of the numerical scheme~\eqref{scheme-SAV-1}:
 \begin{eqnarray} \label{eng stab-1}
\begin{cases}
\frac{\frac32 \phi^{n+1} - 2 \phi^n + \frac12 \phi^{n-1}}{\dt} = \Delta_N \mu_N^{n+1} , \ \ \ \ \ \ ({\theequation a})\\
\mu_N^{n+1} = L_N \phi^{n+1} + \frac{r^{n+1}}{\sqrt{E_{1,N} (\hat{\phi}^{n+1})}}
 N_N (\hat{\phi}^{n+1})  ,  \ \ \ \ \ \ ({\theequation b})\\
\frac{\frac32 r^{n+1} - 2 r^n + \frac12 r^{n-1}}{\dt} = \frac{1}{2 \sqrt{E_{1,N} (\hat{\phi}^{n+1}) } }   \langle N_N (\hat{\phi}^{n+1}) , \frac{\frac32 \phi^{n+1} - 2 \phi^n + \frac12 \phi^{n-1} }{\dt}   \rangle .  \ \ \ \ \ \ ({\theequation c})
\end{cases}
\end{eqnarray}

Subsequently, taking discrete inner product with (\ref{eng stab-1}a) by $\mu_N^{n+1}$, with (\ref{eng stab-1}b) by $-(\frac32 \phi^{n+1} - 2 \phi^n + \frac12 \phi^{n-1})$, with (\ref{eng stab-1}c) by $2r^{n+1}$, we have
\begin{eqnarray}
 \langle \frac32 \phi^{n+1} - 2 \phi^n + \frac12 \phi^{n-1} , \mu_N^{n+1} \rangle
 &=& \dt \langle \Delta_N \mu_N^{n+1} , \mu_N^{n+1} \rangle
 = - \dt \| \nabla_N \mu_N^{n+1} \|_2^2 ,  \label{eng stab-2-1}
\\
 - \langle \frac32 \phi^{n+1} - 2 \phi^n + \frac12 \phi^{n-1} , \mu_N^{n+1} \rangle
  &=& - \langle L_N \phi^{n+1} , \frac32 \phi^{n+1} - 2 \phi^n + \frac12 \phi^{n-1} \rangle
  \nonumber
\\
  &&
+ \frac{r^{n+1} }{\sqrt{E_{1,N} (\hat{\phi}^{n+1})}} \langle - N_N (\hat{\phi}^{n+1}) ,
  \frac32 \phi^{n+1} - 2 \phi^n + \frac12 \phi^{n-1} \rangle ,  \label{eng stab-2-2}
\\
 2 r^{n+1} ( \frac32 r^{n+1} - 2 r^n + \frac12 r^{n-1} )
 &=& \frac{r^{n+1} }{\sqrt{E_{1,N} (\hat{\phi}^{n+1})}}
 \langle N_N (\hat{\phi}^{n+1}) , \frac32 \phi^{n+1} - 2 \phi^n
 + \frac12 \phi^{n-1} \rangle . \label{eng stab-2-3}
\end{eqnarray}
In turn, by adding~\eqref{eng stab-2-1}, \eqref{eng stab-2-2} and \eqref{eng stab-2-3}, we obtain
\begin{equation}
  \langle L_N \phi^{n+1}, \frac32 \phi^{n+1} - 2 \phi^n + \frac12 \phi^{n-1} \rangle
  + 2 r^{n+1} ( \frac32 r^{n+1} - 2 r^n + \frac12 r^{n-1} )
  = - \dt \| \nabla_N \mu_N^{n+1} \|_2^2 .  \label{eng stab-3}
\end{equation}
Meanwhile, the derivation of the following two identities are straightforward:
\begin{eqnarray}
  &&
   \langle L_N \phi^{n+1}, \frac32 \phi^{n+1} - 2 \phi^n + \frac12 \phi^{n-1} \rangle
   = \langle L_N^\frac12 \phi^{n+1},
    L_N^\frac12 ( \frac32 \phi^{n+1} - 2 \phi^n + \frac12 \phi^{n-1} ) \rangle
   \nonumber
\\
  &=&
     \frac14 ( \| L_N^\frac12 \phi^{n+1} \|_2^2 - \| L_N^\frac12 \phi^n \|_2^2
   + \| L_N^\frac12 ( 2 \phi^{n+1} - \phi^n ) \|_2^2
   - \| L_N^\frac12 ( 2 \phi^n - \phi^{n-1} ) \|_2^2  \nonumber
\\
  &&
    + \| L_N^\frac12 ( \phi^{n+1} - 2 \phi^n + \phi^{n-1} ) \|_2^2  ) ,  \label{eng stab-4-1}
\\
  &&
   2 r^{n+1} ( \frac32 r^{n+1} - 2 r^n + \frac12 r^{n-1} )   \nonumber
\\
  &=&
     \frac12 ( | r^{n+1} |^2 - | r^n |^2 + | 2 r^{n+1} - r^n |^2
   - | 2 r^n - r^{n-1} |^2 + | r^{n+1} - 2 r^n + r^{n-1}  |^2  )  ,   \label{eng stab-4-2}
\end{eqnarray}
in which identity~\eqref{spectral-coll-inner product-4} has been applied in the first step of~\eqref{eng stab-4-1}.  Going back~\eqref{eng stab-3}, we arrive at
\begin{eqnarray}
  &&
  \mathcal{E}_{N} ( \phi^{n+1}, \phi^n, r^{n+1}, r^n)
 -  \mathcal{E}_{N} ( \phi^n, \phi^{n-1}, r^n, r^{n-1} )   \nonumber
\\
  &=&
   - \frac14 \| L_N^\frac12 ( \phi^{n+1} - 2 \phi^n + \phi^{n-1} ) \|_2^2
   - \frac12 | r^{n+1} - 2 r^n + r^{n-1}  |^2
   - \dt \| \nabla_N \mu_N^{n+1} \|_2^2  \le 0 . \label{eng stab-5}
\end{eqnarray}
This completes the proof of Theorem~\ref{SPFC-energy stability}.
\end{proof}

As a direct consequence of the energy stability, a uniform-in-time $H_N^2$ bound for the numerical solution is derived as follows.

	\begin{cor} \label{SPFC: H^2 bound}
Suppose that the initial data are sufficiently regular so that
\begin{equation}
\frac14 ( \| L_N^\frac12 \phi^0 \|_2^2 + \| L_N^\frac12 ( 2  \phi^0 - \phi^{-1} ) \|_2^2 )
 + \frac12 ( | r^0 |^2 + | 2 r^0 - r^{-1} |^2 ) \le \tilde{C}_0,  \label{initial energy-0}
\end{equation}
for some $\tilde{C}_0$ that is independent of $h$. Then we have the following uniform-in-time  $H_N^2$ bound for the numerical solution:
	\begin{equation}
\| \phi^m \|_{H_N^2} \le \tilde{C}_1 ,  \quad \forall \, m \ge 1 , \label{SPFC-H2 stab-0}
	\end{equation}
where $\tilde{C}_1>0$  depends on $\Omega$ and $\tilde{C}_0$, but is independent of $h$, $\dt$ and the time step  $t^m$.
	\end{cor}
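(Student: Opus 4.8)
The plan is to read the three nonnegative pieces of the modified energy $\mathcal{E}_N$ against the discrete $H_N^2$ norm, exploiting the dissipation already established in Theorem~\ref{SPFC-energy stability}. First I would iterate the energy inequality~\eqref{SPFC-eng stab-est}, which holds at every step of the scheme, down to the initial step: for every $m \ge 1$,
\begin{equation*}
\mathcal{E}_N(\phi^m, \phi^{m-1}, r^m, r^{m-1}) \le \mathcal{E}_N(\phi^{m-1}, \phi^{m-2}, r^{m-1}, r^{m-2}) \le \cdots \le \mathcal{E}_N(\phi^0, \phi^{-1}, r^0, r^{-1}) \le \tilde{C}_0 ,
\end{equation*}
where the final inequality is precisely the hypothesis~\eqref{initial energy-0}. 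Since each of the four terms defining $\mathcal{E}_N$ in~\eqref{discrete energy} is nonnegative, retaining only the first one yields the single-step bound $\| L_N^\frac12 \phi^m \|_2^2 \le 4 \tilde{C}_0$, uniformly in $m$, $h$, and $\dt$.

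Next I would unfold the operator $L_N = a I + \Delta_N^2$. Using the self-adjoint factorization~\eqref{spectral-coll-inner product-4} together with the summation-by-parts identity $\langle f, \Delta_N^2 f \rangle = \| \Delta_N f \|_2^2$ from~\eqref{spectral-coll-inner product-3}, one computes
\begin{equation*}
\| L_N^\frac12 \phi^m \|_2^2 = \langle \phi^m, L_N \phi^m \rangle = a \| \phi^m \|_2^2 + \| \Delta_N \phi^m \|_2^2 .
\end{equation*}
Because $a = 1 - \varepsilon > 0$, this simultaneously controls both $\| \phi^m \|_2$ and $\| \Delta_N \phi^m \|_2$ by a constant depending only on $a$ and $\tilde{C}_0$; the positivity of $a$ is exactly what makes the $L^2$ bound fall out of the $L_N^\frac12$ energy.

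It then remains to recover the intermediate gradient term, which does not itself appear in $\mathcal{E}_N$ and is therefore the only point requiring a short argument rather than pure bookkeeping. Applying the summation-by-parts formula and the Cauchy--Schwarz inequality gives
\begin{equation*}
\| \nabla_N \phi^m \|_2^2 = - \langle \phi^m, \Delta_N \phi^m \rangle \le \| \phi^m \|_2 \, \| \Delta_N \phi^m \|_2 \le \tfrac12 \big( \| \phi^m \|_2^2 + \| \Delta_N \phi^m \|_2^2 \big) ,
\end{equation*}
so the gradient norm is dominated by the two quantities already bounded. Summing the three contributions according to the definition~\eqref{spectral-defi-Hm} of $\| \cdot \|_{H_N^2}$ produces $\| \phi^m \|_{H_N^2} \le \tilde{C}_1$ with $\tilde{C}_1$ depending only on $\Omega$ (through $a$) and $\tilde{C}_0$, uniform in $m$, $h$, $\dt$ and $t^m$, as claimed. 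The main (mild) obstacle is thus the interpolation of the absent gradient norm between the $L^2$ and $\Delta_N$ bounds; everything else is a direct consequence of the energy dissipation and the structure of $L_N$.
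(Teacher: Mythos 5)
Your proposal is correct and follows essentially the same route as the paper: iterate the dissipation inequality~\eqref{SPFC-eng stab-est} to bound $\| L_N^\frac12 \phi^m \|_2^2$ by $4\tilde{C}_0$, unfold $\| L_N^\frac12 \phi^m \|_2^2 = a \| \phi^m \|_2^2 + \| \Delta_N \phi^m \|_2^2$, and recover the gradient term via summation by parts and Cauchy--Schwarz. The only cosmetic difference is that you derive the $L_N^\frac12$ identity from the summation-by-parts relations~\eqref{spectral-coll-inner product-3}--\eqref{spectral-coll-inner product-4} whereas the paper reads it off the Fourier eigenvalue expansion~\eqref{spectral-coll-6}; both are one-line arguments for the same fact.
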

	
	\begin{proof}
As a result of \eqref{SPFC-eng stab-est}, the following energy bound is available:
	\begin{align}
\frac14 \| L_N^\frac12 \phi^m \|_2^2 & \le \mathcal{E}_{N} (\phi^m, \phi^{m-1}, r^m, r^{m-1})
 \le   \mathcal{E}_{N} (\phi^0, \phi^{-1}, r^0, r^{-1} )
	\nonumber
	\\
& = \frac14 ( \| L_N^\frac12 \phi^0 \|_2^2 + \| L_N^\frac12 ( 2  \phi^0 - \phi^{-1} ) \|_2^2 )
 + \frac12 ( | r^0 |^2 + | 2 r^0 - r^{-1} |^2 ) \le \tilde{C}_0 ,
	\label{SPFC-H2 bound-1}
	\end{align}
for any $m \ge 1$. On the other hand, the eigenvalue expansion~\eqref{spectral-coll-6} implies the following fact
\begin{equation}
    \| L_N^\frac12 f \|_2^2  = a \| f \|_2^2 + \| \Delta_N f \|_2^2 ,  \quad
    \forall  f \in\mathcal{G}_N .
  \label{SPFC-H2 bound-2}
\end{equation}
Then we arrive at
\begin{eqnarray}
   \| \phi^m \|_2^2 + \| \Delta_N \phi^m \|_2^2 \le \frac{4 \tilde{C}_0}{a} ,   \quad
   \forall m \ge 1 .  \label{SPFC-H2 bound-3}
\end{eqnarray}
And also, the following estimate is available:
\begin{eqnarray}
    \| \nabla_N \phi^m \|_2^2 = - \langle \phi^m , \Delta_N \phi^m \rangle
    \le \| \phi^m \|_2 \cdot \| \Delta_N \phi^m \|_2
    \le \frac12 ( \| \phi^m \|_2^2 + \| \Delta_N \phi^m \|_2^2 )
    \le \frac{2 \tilde{C}_0}{a}  .   \label{SPFC-H2 bound-4}
\end{eqnarray}
Therefore, the following bound is obvious
 	\begin{equation}
\| \phi^m \|_{H_N^2} = \Big( \| \phi^m \|_2^2 + \| \nabla_N \phi^m \|_2^2
  +  \| \Delta_N \phi^m \|_2^2 \Big)^\frac12 \le \Big( \frac{6 \tilde{C}_0}{a} \Big)^\frac12 := \tilde{C}_1  ,
    \quad \forall m \ge 1 .  \label{SPFC-H2 bound-5}
	\end{equation}
This completes the proof of Corollary~\ref{SPFC: H^2 bound}.
	\end{proof}
	
\begin{rem}
It is obvious that the modified energy functional~\eqref{discrete energy} is the second order approximation to the original discrete energy~\eqref{energy-discrete-spectral}, under certain regularity assumption for the numerical solution. Meanwhile, such a modified discrete energy is in terms of a scalar auxiliary variable $r$, combined with the linear surface diffusion energy part, not fully in terms of the original phase variable $\phi$, as formulated in~\eqref{energy-discrete-spectral}. Although a direct bound of the original energy functional is not available in terms of the initial data, a uniform-in-time $H_N^2$ bound for the numerical solution could be derived, up to a constant multiple, as demonstrated in Corollary~\ref{SPFC: H^2 bound}.
\end{rem}

\begin{rem} 	
%The energy stability of numerical methods for  gradient flow PDE have attracted a lot of attentions over the years.
For various gradient flow equations, the second order numerical scheme using the BDF temporal stencil has attracted many attentions in recent years. For these BDF-type method applied to the original phase variables, an artificial Douglas-Dupont regularization term has to be added to ensure the energy stability; see the related works~\cite{fengW17c, Hao2020, LiW18, Meng2020, yan17} for the epitaxial thin film growth and Cahn-Hilliard equations, respectively. %In particular, a careful comparison reveals that, the energy growth term coming from the concave diffusion process has to be compensated by the artificial diffusion term, as well as the additional stability in the temporal stencil. This in turn requires an artificial diffusion term to have the same diffusion power as the surface diffusion term, which becomes acceptable in the practical computations.
On the other hand, for an SAV-based numerical algorithm, such an artificial regularization is not needed, since the concave diffusion term has already been included in the scalar quadrant part.
    \end{rem}

	\begin{rem}
	\label{rem:W16 est}
As a combination of the uniform in time $H_N^2$ bound~\eqref{SPFC-H2 stab-0} and the discrete Sobolev embedding inequality~\eqref{embedding-0}, we arrive at a uniform in time $W_N^{1,6}$ estimate for the numerical solution:
	\begin{equation}
\| \nabla_N \phi^m \|_6 \le C \tilde{C}_1 ,  \quad \forall \ m \ge 1 .
	\label{SPFC-W16 est-0}
	\end{equation}
And also, the modified energy inequality~\eqref{SPFC-H2 bound-1} indicates that
\begin{eqnarray}
   \frac12 | r^m |^2 \le \tilde{C}_0 ,  \quad \mbox{so that} \, \, \,
   r^m \le ( 2 \tilde{C}_0 )^\frac12 ,  \quad \forall m \ge 1.
   \label{SPFC-r est-0}
	\end{eqnarray}
These estimates will be useful in the higher order stability analysis presented below.
\end{rem}

Meanwhile, the established energy stability estimate~\eqref{SPFC-eng stab-est} is in terms of the modified energy functional~\eqref{discrete energy}. On the other hand, for the original discrete energy~\eqref{energy-discrete-spectral}, the following estimate is available, with the help of the uniform-in-time $H_N^2$ bound~\eqref{SPFC-H2 stab-0}, established in Corollary~\ref{SPFC: H^2 bound}.

\begin{prop}   \label{prop: energy bound}
Suppose that the initial data are sufficiently regular~\eqref{initial energy-0} is satisfied, for some $\tilde{C}_0$ that is independent of $h$. Then we have the following uniform-in-time bound for the original energy functional:
	\begin{equation}
 E_N (\phi^m) \le \tilde{C}_1^* ,  \quad \forall \, m \ge 1 , \label{SPFC-energy-0}
	\end{equation}
where $\tilde{C}_1^* >0$  depends on $\Omega$ and $\tilde{C}_0$, but is independent of $h$, $\dt$ and the time step $t^m$.
	\end{prop}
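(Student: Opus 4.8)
The plan is to bound each of the four terms appearing in the original discrete energy $E_N(\phi^m)$ directly, using the uniform-in-time $H_N^2$ bound from Corollary~\ref{SPFC: H^2 bound} together with the $W_N^{1,6}$ estimate recorded in Remark~\ref{rem:W16 est}. Three of the four terms are immediate. Since $\| \phi^m \|_2$, $\| \nabla_N \phi^m \|_2$ and $\| \Delta_N \phi^m \|_2$ are each controlled by $\tilde{C}_1$ via~\eqref{SPFC-H2 stab-0}, the contributions $\frac{a}{2} \| \phi^m \|_2^2$ and $\frac12 \| \Delta_N \phi^m \|_2^2$ are bounded by $\frac{a}{2} \tilde{C}_1^2$ and $\frac12 \tilde{C}_1^2$ respectively, while the concave term $- \| \nabla_N \phi^m \|_2^2$ is non-positive and may simply be discarded in an upper estimate.

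The only term that genuinely requires extra care is the quartic piece $\frac14 \| \nabla_N \phi^m \|_4^4$, which the raw $H_N^2$ bound does not control, since the latter supplies only the $\ell^2$ norm of the gradient. Here I would invoke the discrete Sobolev embedding $H_N^2 \hookrightarrow W_N^{1,6}$ from Proposition~\ref{prop:embedding}, which combined with Corollary~\ref{SPFC: H^2 bound} yields precisely the uniform estimate $\| \nabla_N \phi^m \|_6 \le C \tilde{C}_1$ already recorded in~\eqref{SPFC-W16 est-0}. Because the discrete $\ell^p$ norms carry the $h^3$ volume weight (whose total mass equals $|\Omega|$), they obey the same H\"older interpolation as the continuous $L^p$ norms on a finite-measure domain; applying H\"older with conjugate exponents $\frac32$ and $3$ gives $\| \nabla_N \phi^m \|_4 \le |\Omega|^{1/12} \| \nabla_N \phi^m \|_6$. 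Raising to the fourth power then produces $\frac14 \| \nabla_N \phi^m \|_4^4 \le \frac14 |\Omega|^{1/3} ( C \tilde{C}_1 )^4$, a bound independent of $h$, $\dt$ and $t^m$.

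Summing the three bounded contributions, I would conclude
$$E_N(\phi^m) \le \frac14 |\Omega|^{1/3} (C \tilde{C}_1)^4 + \frac{a}{2} \tilde{C}_1^2 + \frac12 \tilde{C}_1^2 =: \tilde{C}_1^* , \quad \forall\, m \ge 1,$$
which is the claimed uniform-in-time bound with $\tilde{C}_1^*$ depending only on $\Omega$ and $\tilde{C}_0$. I do not anticipate any real obstacle in this argument; the single conceptual point worth flagging is that the quartic term forces the use of the higher-integrability embedding of Proposition~\ref{prop:embedding} rather than the bare $H_N^2$ control of Corollary~\ref{SPFC: H^2 bound}, so the proof genuinely relies on the $W_N^{1,6}$ estimate and not merely on the energy-stability $H_N^2$ bound.
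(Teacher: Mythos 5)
Your proof is correct and follows essentially the same route as the paper's: both discard the non-positive term $-\| \nabla_N \phi^m \|_2^2$, bound the quadratic terms directly by the uniform $H_N^2$ estimate~\eqref{SPFC-H2 stab-0}, and control the quartic term via the discrete H\"older inequality $\| \nabla_N \phi^m \|_4 \le C \| \nabla_N \phi^m \|_6$ combined with the Sobolev embedding of Proposition~\ref{prop:embedding}. The only cosmetic difference is that the paper groups $\frac{a}{2}\|\phi^m\|_2^2 + \frac12\|\Delta_N \phi^m\|_2^2 \le \frac12 \|\phi^m\|_{H_N^2}^2$ using $a \le 1$, while you bound the two terms separately and make the H\"older constant $|\Omega|^{1/12}$ explicit.
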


\begin{proof}
By the definition of the $\| \cdot \|_{H_N^2}$ norm~\eqref{spectral-defi-Hm}, we see that
\begin{eqnarray}
  &&
   \frac{a}{2} \| \phi^m \|_2^2 + \frac12 \nrm{\Delta_N \phi^m }_2^2
   \le \frac12 \| \phi^m \|_{H_N^2}^2 \le \frac12 \tilde{C}_1^2 ,  \quad
   \mbox{since $0 \le a \le 1$} ,   \label{SPFC-energy-1-1}
\\
  &&
  \| \nabla_N \phi^m \|_4  \le C \| \nabla_N \phi^m \|_6 \le \breve{C}_1 \| \Delta _N  \phi^m \|_2 ,
  \label{SPFC-energy-1-2}
\\
  &&
  \mbox{so that} \, \, \,
  \frac14 \| \nabla_N \phi^m \|_4^4  \le \frac14 \breve{C}_1^4 \| \Delta _N \phi^m \|_2^4
  \le \frac14 \breve{C}_1^4 \tilde{C}_1^4 ,  \label{SPFC-energy-1-3}
%E_N (\phi) := \frac14 \| \nabla_N \phi \|_4^4 + \frac{a}{2} \| \phi \|_2^2 - \| \nabla_N \phi \|_2^2 + \frac12 \nrm{\Delta_N \phi}_2^2 .
\end{eqnarray}
for any $m \ge 1$, in which the uniform-in-time $H_N^2$ bound~\eqref{SPFC-H2 stab-0} has been extensively applied. Also notice that the discrete H\"older inequality, as well as the Sobolev embedding~\eqref{embedding-0}, have been applied in the derivation of~\eqref{SPFC-energy-1-2}. Then we arrive at
\begin{eqnarray}
   E_N (\phi^m) &=& \frac14 \| \nabla_N \phi^m \|_4^4 + \frac{a}{2} \| \phi^m \|_2^2
    - \| \nabla_N \phi^m \|_2^2 + \frac12 \| \Delta_N \phi^m \|_2^2  \nonumber
\\
  &\le&
  \frac14 \| \nabla_N \phi^m \|_4^4 + \frac{a}{2} \| \phi^m \|_2^2
     + \frac12 \| \Delta_N \phi^m \|_2^2
     \le \frac14 \breve{C}_1^4 \tilde{C}_1^4 +  \frac12 \tilde{C}_1^2 := \tilde{C}_1^* ,
     \label{SPFC-energy-2}
\end{eqnarray}
for any $m \ge 1$. Notice that $\tilde{C}_1^*$ only depends on $\Omega$ and the initial data, henceforth on $\Omega$ and $\tilde{C}_0$, and independent on $h$, $\dt$ and final time. This completes the proof of Proposition~\ref{prop: energy bound}.
\end{proof}

\begin{rem}
For the proposed SAV scheme~\eqref{scheme-SAV-1}, the uniform energy bound $\tilde{C}_1^*$ in~\eqref{SPFC-energy-0} depends on the uniform-in-time $H_N^2$ bound $\tilde{C}_1$ established in~\eqref{SPFC-H2 stab-0}. Since $\tilde{C}_1$ could be represented as a constant multiple of $\tilde{C}_0^\frac12$ (as given by~\eqref{SPFC-H2 bound-5}), while $\tilde{C}_0$ is bounded by the initial energy plus a fixed constant, we conclude that the original energy bound $\tilde{C}_1^*$ turns out to be dependent on the original energy in a quadratic way, as revealed by~\eqref{SPFC-energy-2}. In contrast, the following uniform-in-time bound has been derived in a recent work~\cite{cheng2019d} for the SPFC equation:
\begin{equation}
  E_N (\phi^m ) \le E_N (\phi^0) .   \label{SPFC-alt scheme-energy-0}
\end{equation}
Of course, it is a much sharper estimate for the original energy functional than the one established for the SAV approach, namely~\eqref{SPFC-energy-2}. This difference is based on the fact that, an auxiliary variable~\eqref{SAV-0} has been introduced in the SAV algorithm, so that only the dissipation for the reformulated energy functional~\eqref{discrete energy} is preserved, as established in~\eqref{SPFC-eng stab-est}. In comparison, the primitive variable formulation of the SPFC equation was discussed in \cite{cheng2019d}, which in turn leads to a direct bound of the original energy functional~\eqref{SPFC-alt scheme-energy-0}.

In fact, there have been a great deal of efforts to enforce the stability estimate for the original energy functional in the SAV numerical approach. For example, in two recent works~\cite{ChengQ2020a, ChengQ2020b}, a Lagrange multiplier approach has been introduced, so that the dissipation law for the original energy functional becomes available, if the proposed numerical system is solvable. Meanwhile, due to the nonlinear nature of the Lagrange multiplier approach of the SAV method presented in \cite{ChengQ2020a, ChengQ2020b}, more detailed investigations of the unique solvability analysis have to be undertaken. An application of such an approach to the SPFC equation will also be considered in the future works.
\end{rem}

%energy-discrete-spectral}

\subsection{The $\ell^\infty (0,T; H^3)$ bound estimate for the numerical solution} 	

\begin{thm}  \label{thm: H3 est}
For the numerical solution~\eqref{scheme-SAV-1}, the following estimate is available:
\begin{equation}
    \| \phi_S^m \|_{H^3} \le Q^{(3)} ,  %\quad  \varepsilon \Big( \dt \sum_{k=0}^n \| \Delta^2 \phi^k \|^2 \Big)^{1/2} \le C C_2 (T +1 ) ,
    \quad \forall m \ge 1  ,
    \label{H3 est-0}
\end{equation}
in which $\phi_S^m$ stands for the spectral interpolation of the numerical solution $\phi^m$, as given by formula~\eqref{spectral-coll-projection-2}. The constant $Q^{(3)}$ only depends on the initial $H^3$ data and the domain, and it is independent on $\dt$, $h$ and $T$.
\end{thm}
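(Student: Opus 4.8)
The plan is to reduce the assertion to a single uniform-in-time bound on $\| \nabla_N \Delta_N \phi^m \|_2$. Since $\phi_S^m$ is the trigonometric interpolant of $\phi^m$, Parseval's identity gives an equivalence of the form $\| \phi_S^m \|_{H^3}^2 \le C ( \| \phi^m \|_2^2 + \| \nabla_N \phi^m \|_2^2 + \| \Delta_N \phi^m \|_2^2 + \| \nabla_N \Delta_N \phi^m \|_2^2 )$, and the first three terms are already controlled uniformly in $m$, independently of $T$, by the $H_N^2$ bound \eqref{SPFC-H2 stab-0} of Corollary~\ref{SPFC: H^2 bound}. Hence everything rests on an $\ell^\infty(0,T)$ estimate for the third-order quantity $\| \nabla_N \Delta_N \phi^m \|_2 = \| ( - \Delta_N )^{3/2} \phi^m \|_2$, i.e. on a third-order energy estimate for the scheme.

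To produce it, I would mimic the proof of Theorem~\ref{SPFC-energy stability} but test the rewritten equation (\ref{eng stab-1}a) with $\dt \, ( - \Delta_N )^3 \phi^{n+1}$ instead of $\mu_N^{n+1}$. The BDF2 telescoping identity used in \eqref{eng stab-4-1}, now with $L_N^{1/2}$ replaced by $( - \Delta_N )^{3/2}$, turns the left-hand side into $\mathcal{F}^{n+1} - \mathcal{F}^n$ plus a nonnegative second-difference term, where $\mathcal{F}^{n+1} := \tfrac14 ( \| \nabla_N \Delta_N \phi^{n+1} \|_2^2 + \| \nabla_N \Delta_N ( 2 \phi^{n+1} - \phi^n ) \|_2^2 )$ is exactly the third-order energy sought. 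On the right-hand side the implicit operator $L_N \phi^{n+1} = a \phi^{n+1} + \Delta_N^2 \phi^{n+1}$ supplies the dissipation $- \dt ( a \| \Delta_N^2 \phi^{n+1} \|_2^2 + \| \Delta_N^3 \phi^{n+1} \|_2^2 )$, while the auxiliary-variable part contributes the forcing $- \dt \, \frac{r^{n+1}}{\sqrt{E_{1,N}(\hat{\phi}^{n+1})}} \langle N_N ( \hat{\phi}^{n+1} ) , ( - \Delta_N )^4 \phi^{n+1} \rangle$. Unlike in Theorem~\ref{SPFC-energy stability}, the $r$-evolution (\ref{eng stab-1}c) is not used for a cancellation here; its sole role is to provide, via \eqref{SPFC-r est-0} together with $E_{1,N}(\hat{\phi}^{n+1}) \ge | \Omega |$, a uniform bound on the scalar coefficient multiplying the forcing term.

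The heart of the argument, and the step I expect to be the main obstacle, is to absorb this forcing. After integration by parts I would shift derivatives so that $\phi^{n+1}$ carries the top order $\Delta_N^3 \phi^{n+1}$ (absorbed, by Young's inequality, into the order-six dissipation) while the extrapolated argument $\hat{\phi}^{n+1} = 2 \phi^n - \phi^{n-1}$ carries at most third-order derivatives, i.e. $\| \nabla_N \Delta_N \hat{\phi}^{n+1} \|_2$, which is controlled by the previous-step energies $\mathcal{F}^n, \mathcal{F}^{n-1}$. The essential difficulty is that only the uniform $H_N^2$ and $W_N^{1,6}$ bounds \eqref{SPFC-H2 stab-0}, \eqref{SPFC-W16 est-0} are available a priori on $\hat{\phi}^{n+1}$, so a naive distribution of derivatives, which would demand fourth-order control of $\hat{\phi}^{n+1}$, fails. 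For the cubic collocation term $- \nabla_N \cdot ( | \nabla_N \hat{\phi}^{n+1} |^2 \nabla_N \hat{\phi}^{n+1} )$ I would place the low-order gradient factors in $L^6$ (via Remark~\ref{rem:W16 est} and the embedding of Proposition~\ref{prop:embedding}) and control the remaining derivative factor in $H^r$ by the aliasing estimate \eqref{spectral-coll-projection-4} of Lemma~\ref{lemma:aliasing error}, which is precisely what prevents a loss of regularity in the interpolated product; the residual low-order contributions are then bounded by the $H_N^2$ estimate. This should yield a bound of the form $\text{forcing} \le \tfrac12 \dt \| \Delta_N^3 \phi^{n+1} \|_2^2 + C \dt ( \mathcal{F}^n + \mathcal{F}^{n-1} ) + C' \dt$.

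Finally, I would close the estimate without a Gronwall factor, which is what keeps $Q^{(3)}$ independent of $T$. The elliptic regularity estimate \eqref{elliptic regularity-0} of Proposition~\ref{prop:elliptic regularity} gives $\| \nabla_N \Delta_N \phi^{n+1} \|_2 \le \hat{C}_0 \| \Delta_N^3 \phi^{n+1} \|_2$, so the retained half of the order-six dissipation dominates a fixed multiple $c \dt \, \mathcal{F}^{n+1}$ and the telescoped inequality becomes a relaxation recursion
\[
( 1 + c \dt ) \, \mathcal{F}^{n+1} \le \mathcal{F}^n + C \dt ( \mathcal{F}^n + \mathcal{F}^{n-1} ) + C' \dt .
\]
The delicate point is that the sixth-order dissipation must be strong enough to dominate the third-order forcing, which I would secure by coupling this recursion with the $T$-independent summable dissipation $\sum_n \dt \| \nabla_N \mu_N^{n+1} \|_2^2$ already furnished by the energy inequality \eqref{eng stab-5}: the latter bounds $\sum_n \dt \, \mathcal{F}^n$ independently of $T$ through Proposition~\ref{prop:elliptic regularity}, so the accumulated forcing $\sum_n \dt ( \mathcal{F}^n + \mathcal{F}^{n-1} )$ stays bounded. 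A discrete induction on this closed bootstrap then yields $\mathcal{F}^m \le C$ uniformly in $m$, whence $\| \nabla_N \Delta_N \phi^m \|_2^2 \le 4 \mathcal{F}^m$ is bounded independently of $T$. The start-up value $\mathcal{F}^1$ is controlled by the assumed $H^3$-regularity of the initial data together with the $O(\dt^2 + h^m)$ ghost-point construction \eqref{scheme-SAV-initial-1}, and assembling all the bounds produces the constant $Q^{(3)}$.
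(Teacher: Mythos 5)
Your overall architecture does match the paper's: test (\ref{scheme-SAV-1}a) with $\Delta_N^3\phi^{n+1}$, telescope the BDF2 stencil into a third-order energy, keep the sixth-order dissipation $\|\Delta_N^3\phi^{n+1}\|_2^2$ supplied by $L_N$, bound the scalar prefactor via \eqref{SPFC-r est-0} and $E_{1,N}(\hat{\phi}^{n+1})\ge|\Omega|$, use the aliasing Lemma~\ref{lemma:aliasing error}, and close with a relaxation recursion through Proposition~\ref{prop:elliptic regularity} to avoid a Gronwall factor. However, the central nonlinear estimate you propose is not achievable, and this is a genuine gap. Count derivatives: once $\phi^{n+1}$ is capped at six derivatives (the most the dissipation can absorb), the pairing is $\langle\Delta_N N_N(\hat{\phi}^{n+1}),\Delta_N^3\phi^{n+1}\rangle$, so three derivatives must act on the cubic expression $|\nabla_N\hat{\phi}^{n+1}|^2\nabla_N\hat{\phi}^{n+1}$, producing terms such as $|\nabla\hat{\phi}_S^{n+1}|^2\,\nabla\nabla\Delta\hat{\phi}_S^{n+1}$, i.e.\ \emph{fourth}-order derivatives of $\hat{\phi}^{n+1}$ in $L^2$. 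These cannot be bounded by the third-order energies $\mathcal{F}^n,\mathcal{F}^{n-1}$, no matter how the remaining factors are distributed: placing gradients in $L^6$ or $L^\infty$ and invoking aliasing control addresses integrability, not the number of derivatives; and shifting a derivative back onto $\phi^{n+1}$ would require $\|\nabla_N\Delta_N^3\phi^{n+1}\|_2$, which exceeds the dissipation. Hence your claimed bound ``forcing $\le\tfrac12\dt\|\Delta_N^3\phi^{n+1}\|_2^2+C\dt(\mathcal{F}^n+\mathcal{F}^{n-1})+C'\dt$'' fails. The paper resolves exactly this point differently: the fourth-order and $L^\infty$-type factors of $\hat{\phi}_S^{n+1}$ are interpolated between the uniform $H^2$ bound $\tilde{C}_1$ and the sixth-order norm $\|\Delta^3\hat{\phi}_S^{n+1}\|$ (inequalities \eqref{H3 est-7-6}--\eqref{H3 est-7-10}), yielding the fractional-power bound \eqref{H3 est-7-11}; after Cauchy--Schwarz and Young this leaves a constant plus \emph{small} multiples of the sixth-order quantities $\|\Delta_N^3\phi^n\|_2^2$, $\|\Delta_N^3\phi^{n-1}\|_2^2$, and these previous-step terms are absorbed by augmenting the energy with $\dt$-weighted dissipation (the quantity $G^{n+1}$ in \eqref{H3 est-9-7}). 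Your outline has no substitute for this absorption mechanism.

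The second gap is your fallback for closing the recursion: the claim that the energy-stability dissipation $\sum_n\dt\|\nabla_N\mu_N^{n+1}\|_2^2$ bounds $\sum_n\dt\,\mathcal{F}^n$ uniformly in $T$ ``through Proposition~\ref{prop:elliptic regularity}'' is both unsubstantiated and circular. Indeed $\nabla_N\mu_N^{n+1}$ contains $\nabla_N\Delta_N^2\phi^{n+1}$ \emph{plus} the nonlinear contribution $\frac{r^{n+1}}{\sqrt{E_{1,N}(\hat{\phi}^{n+1})}}\nabla_N N_N(\hat{\phi}^{n+1})$; extracting any derivative control of $\phi$ from it requires first bounding $\|\nabla_N N_N(\hat{\phi}^{n+1})\|_2$, which in the paper (Lemma~\ref{lem: pre est}, estimate \eqref{pre est-0-3}) is done only by invoking the $H^3$ bound you are in the middle of proving. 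The paper needs no such summability argument: because its absorbed forcing is a pure constant $M^{(0)}$, the recursion $G^{n+1}-G^n+\frac{C_2}{24}\dt\,G^{n+1}\le M^{(0)}\dt$ closes by induction alone and gives the $T$-uniform bound, whereas your recursion, with an order-one coefficient $C$ on $\dt(\mathcal{F}^n+\mathcal{F}^{n-1})$ and no justified smallness relative to the relaxation constant $c$, would only deliver a Gronwall-type bound growing with $T$ --- contradicting the claimed $T$-independence of $Q^{(3)}$.
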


\begin{proof}
 Taking a discrete inner product with (\ref{scheme-SAV-1}a) by $-2 \Delta_N^3 \phi^{n+1}$, we obtain
\begin{eqnarray}
  &&
  \frac{1}{\dt} \langle \frac32 \phi^{n+1} - 2 \phi^n +\frac12 \phi^{n-1} ,
  -2 \Delta_N^3 \phi^{n+1} \rangle
  +  2 \langle \Delta_N L_N \phi^{n+1} , \Delta_N^3 \phi^{n+1}  \rangle  \nonumber
\\
  &=&
  - 2 \frac{r^{n+1}}{\sqrt{E_{1,N}(\hat{\phi}^{n+1})}}
   \langle \Delta_N N_N (\hat{\phi}^{n+1}) , \Delta_N^3 \phi^{n+1}  \rangle  .
    \label{H3 est-1}
\end{eqnarray}
The temporal stencil term could be analyzed in the same way as in~\eqref{eng stab-4-1}:
\begin{eqnarray}
  &&
   \langle \frac32 \phi^{n+1} - 2 \phi^n +\frac12 \phi^{n-1} ,
  -2 \Delta_N^3 \phi^{n+1} \rangle
  =  \langle \nabla_N \Delta_N ( \frac32 \phi^{n+1} - 2 \phi^n +\frac12 \phi^{n-1} ) ,
   2 \nabla_N \Delta_N \phi^{n+1} \rangle  \nonumber
\\
  &=&
     \frac12 ( \| \nabla_N \Delta_N \phi^{n+1} \|_2^2 - \| \nabla_N \Delta_N \phi^n \|_2^2
   + \| \nabla_N \Delta_N ( 2 \phi^{n+1} - \phi^n ) \|_2^2
   - \| \nabla_N \Delta_N ( 2 \phi^n - \phi^{n-1} ) \|_2^2  \nonumber
\\
  &&
    + \| \nabla_N \Delta_N ( \phi^{n+1} - 2 \phi^n + \phi^{n-1} ) \|_2^2  ) .
    	\label{H3 est-2}
\end{eqnarray}
The surface diffusion part could be handled in a more straightforward way:
\begin{eqnarray}
   \langle \Delta_N L_N \phi^{n+1} , \Delta_N^3 \phi^{n+1}  \rangle
   &=& a \langle \Delta_N \phi^{n+1} , \Delta_N^3 \phi^{n+1}  \rangle
   + \langle \Delta_N^3 \phi^{n+1} , \Delta_N^3 \phi^{n+1}  \rangle   \nonumber
\\
  &=&
    a \| \Delta_N^2 \phi^{n+1} \|_2^2 + \| \Delta_N^3 \phi^{n+1} \|_2^2 .
   \label{H3 est-3}
\end{eqnarray}

For the right hand side nonlinear inner product, we begin with the following observations:
\begin{eqnarray}
    E_{1,N} ( \hat{\phi}^{n+1} ) \ge | \Omega | , \quad
  | r^m | \le ( 2 \tilde{C}_0 )^\frac12 ,  \, \, \, (by~\eqref{SPFC-r est-0}) .
     \label{H3 est-4-2}
\end{eqnarray}
These two bounds imply that
\begin{equation}
   \frac{r^{n+1}}{\sqrt{E_{1,N}(\hat{\phi}^{n+1})}}
   \le \Big( \frac{2 \tilde{C}_0}{ | \Omega | } \Big)^\frac12 .
   \label{H3 est-4-3}
\end{equation}
For the nonlinear inner product, the following expansion is recalled
\begin{eqnarray}
   \Delta_N N_N (\hat{\phi}^{n+1})
   = - \Delta_N \nabla_N \cdot ( | \nabla_N \hat{\phi}^{n+1} |^2
   \nabla_N \hat{\phi}^{n+1}  ) + 2 \Delta_N^2 \hat{\phi}^{n+1} .
   \label{H3 est-5}
\end{eqnarray}
The linear part could be controlled in a standard fashion:
\begin{eqnarray}
   - 2 \frac{r^{n+1}}{\sqrt{E_{1,N}(\hat{\phi}^{n+1})}}
   \langle 2 \Delta_N^2 \hat{\phi}^{n+1} , \Delta_N^3 \phi^{n+1}  \rangle
   &\le& 4  \Big( \frac{2 \tilde{C}_0}{ | \Omega | } \Big)^\frac12
   \| \Delta_N^2 \hat{\phi}^{n+1} \|_2 \cdot  \| \Delta_N^3 \phi^{n+1}  \|_2
   \nonumber
\\
   &\le& \frac{16 \tilde{C}_0}{ | \Omega | }
     \| \Delta_N^2 \hat{\phi}^{n+1} \|_2^2
   + \frac12 \| \Delta_N^3 \phi^{n+1}  \|_2^2  .   \label{H3 est-6}
\end{eqnarray}
For the nonlinear 4-Laplacian part, the following grid function is introduced:
\begin{eqnarray}
  \hat{q}^{n+1} : =  | \nabla_N \hat{\phi}^{n+1} |^2
   \nabla_N \hat{\phi}^{n+1} .   \label{H3 est-7-1}
\end{eqnarray}
This in turn implies that
\begin{eqnarray}
   \| \Delta_N \nabla_N \cdot ( | \nabla_N \hat{\phi}^{n+1} |^2
   \nabla_N \hat{\phi}^{n+1}  )   \|_2
   =    \| \Delta ( \nabla \cdot \hat{q}_S^{n+1}  ) \|_{L^2} ,  \label{H3 est-7-2}
\end{eqnarray}
in which $\hat{q}_S^{n+1}$ is the spectral interpolation of $\hat{q}^{n+1}$, given by formula~\eqref{spectral-coll-projection-2}. Moreover, since $\hat{q}^{n+1}$ is the point-wise interpolation of the continuous function
\begin{eqnarray}
  \varphi_{\hat{q}^{n+1}} :=  | \nabla \hat{\phi}_S^{n+1} |^2
   \nabla \hat{\phi}_S^{n+1}  ,  \quad \mbox{with} \, \, \,
   \hat{\phi}_S^{n+1} = 2 \phi_S^n - \phi_S^{n-1} ,
    \label{H3 est-7-3}
\end{eqnarray}
we see that $\hat{q}_S^{n+1} = R_N ( \varphi_{\hat{q}^{n+1}} )$. In turn, by making use of the aliasing error control inequality stated in Lemma~\ref{lemma:aliasing error}, we conclude that
\begin{eqnarray}
    \| \Delta ( \nabla \cdot \hat{q}_S^{n+1}  ) \|_{L^2}
    \le  \| \hat{q}_S^{n+1} \|_{H^3}  = \| R_N ( \varphi_{\hat{q}^{n+1}} ) \|_{H^3}
    \le 3^\frac32 \| \varphi_{\hat{q}^{n+1}} \|_{H^3}  ,  \quad
    \mbox{since $\varphi_{\hat{q}^{n+1}}  \in {\cal P}_{3 K}$ } .
     \label{H3 est-7-4}
\end{eqnarray}
Meanwhile, for $\varphi_{\hat{q}^{n+1}}$ given by~\eqref{H3 est-7-3}, a detailed expansion and repeated applications of H\"older inequality indicate that
\begin{eqnarray}
   \| \varphi_{\hat{q}^{n+1}} \|_{H^3}
   &\le& C (  \| \varphi_{\hat{q}^{n+1}} \| + \| \nabla \Delta \varphi_{\hat{q}^{n+1}} \| )
   = C (  \| | \nabla \hat{\phi}_S^{n+1} |^2 \nabla \hat{\phi}_S^{n+1}  \|
   + \| \nabla \Delta ( | \nabla \hat{\phi}_S^{n+1} |^2 \nabla \hat{\phi}_S^{n+1} ) \| )
   \nonumber
\\
  &\le&
  C \Big(  \| \nabla \hat{\phi}_S^{n+1} \|_{L^\infty}^2
   \cdot \| \nabla \hat{\phi}_S^{n+1}  \|_{H^3}
   + \| \nabla \nabla \hat{\phi}_S^{n+1} \|_{L^6}^3   \nonumber
\\
  &&  \quad
   +   \| \nabla \hat{\phi}_S^{n+1} \|_{L^\infty}  \cdot
   \| \nabla \nabla \hat{\phi}_S^{n+1} \|_{L^\infty}
   \cdot \| \nabla \hat{\phi}_S^{n+1}  \|_{H^2}   \Big) ,  \label{H3 est-7-5}
\end{eqnarray}
in which the following estimates have been applied
\begin{equation*}
\begin{aligned}
  &
  \| | \nabla \hat{\phi}_S^{n+1} |^2 \nabla \hat{\phi}_S^{n+1}  \|
  \le \| \nabla \hat{\phi}_S^{n+1} \|_{L^\infty}^2 \cdot \| \nabla \hat{\phi}_S^{n+1}  \|
  \le \| \nabla \hat{\phi}_S^{n+1} \|_{L^\infty}^2 \cdot \| \nabla \hat{\phi}_S^{n+1}  \|_{H^3} ,
\\
  &
   \Delta ( | \nabla \hat{\phi}_S^{n+1} |^2 \nabla \hat{\phi}_S^{n+1} )
   =  3 | \nabla \hat{\phi}_S^{n+1} |^2   \nabla \Delta \hat{\phi}_S^{n+1}
     + 6 ( \nabla  \hat{\phi}_S^{n+1} ) ( \nabla \nabla \hat{\phi}_S^{n+1} )
    ( \nabla \nabla \hat{\phi}_S^{n+1} )  ,
\\
  &
   \nabla \Delta ( | \nabla \hat{\phi}_S^{n+1} |^2 \nabla \hat{\phi}_S^{n+1} )
   =  3 | \nabla \hat{\phi}_S^{n+1} |^2   ( \nabla \nabla \Delta \hat{\phi}_S^{n+1} )
     +  6 ( ( \nabla  \hat{\phi}_S^{n+1} ) ( \nabla \nabla \hat{\phi}_S^{n+1} ) )
      \otimes (  \nabla \Delta \hat{\phi}_S^{n+1}  )
\\
   &  \qquad \qquad \qquad
     + 6 ( \nabla  \nabla \hat{\phi}_S^{n+1} ) \otimes ( \nabla \nabla \hat{\phi}_S^{n+1} )
    ( \nabla \nabla \hat{\phi}_S^{n+1} )
    + 12 ( \nabla  \hat{\phi}_S^{n+1} ) ( \nabla \nabla \nabla \hat{\phi}_S^{n+1} )
    ( \nabla \nabla \hat{\phi}_S^{n+1} ) ,
\\
   &
    \| | \nabla \hat{\phi}_S^{n+1} |^2   ( \nabla \nabla \Delta \hat{\phi}_S^{n+1} )  \|
    \le   \| \nabla \hat{\phi}_S^{n+1} \|_{L^\infty}^2
    \cdot \| \nabla \nabla \Delta \hat{\phi}_S^{n+1} \|
    \le    \| \nabla \hat{\phi}_S^{n+1} \|_{L^\infty}^2
    \cdot \| \nabla \hat{\phi}_S^{n+1} \|_{H^3} ,
\\
  &
   \| ( ( \nabla  \hat{\phi}_S^{n+1} ) ( \nabla \nabla \hat{\phi}_S^{n+1} ) )
      \otimes (  \nabla \Delta \hat{\phi}_S^{n+1}  )  \|
   \le C  \|  \nabla  \hat{\phi}_S^{n+1} \|_{L^\infty} \cdot \| \nabla \nabla \hat{\phi}_S^{n+1} \|_{L^\infty}
      \|  \nabla \Delta \hat{\phi}_S^{n+1}   \|
\\
  &  \qquad \qquad \qquad \qquad \qquad \qquad \qquad  \qquad
   \le  C \|  \nabla  \hat{\phi}_S^{n+1} \|_{L^\infty} \cdot \| \nabla \nabla \hat{\phi}_S^{n+1} \|_{L^\infty}
      \|  \nabla \hat{\phi}_S^{n+1}   \|_{H^2}  ,
\\
  &
   \| ( \nabla  \nabla \hat{\phi}_S^{n+1} ) \otimes ( \nabla \nabla \hat{\phi}_S^{n+1} )
    ( \nabla \nabla \hat{\phi}_S^{n+1} )   \|
    \le  C \| \nabla  \nabla \hat{\phi}_S^{n+1} \|_{L^6}^3 ,
\\
  &
  \| ( \nabla  \hat{\phi}_S^{n+1} ) ( \nabla \nabla \nabla \hat{\phi}_S^{n+1} )
    ( \nabla \nabla \hat{\phi}_S^{n+1} )  \|
    \le  \| \nabla  \hat{\phi}_S^{n+1} \|_{L^\infty}
    \cdot  \| \nabla \nabla \nabla \hat{\phi}_S^{n+1} \|
    \cdot \| \nabla \nabla \hat{\phi}_S^{n+1} \|_{L^\infty}
\\
  &  \qquad \qquad \qquad \qquad \qquad \qquad  \qquad \quad
   \le C \| \nabla \hat{\phi}_S^{n+1} \|_{L^\infty}  \cdot
   \| \nabla \nabla \hat{\phi}_S^{n+1} \|_{L^\infty}
   \cdot \| \nabla \hat{\phi}_S^{n+1}  \|_{H^2}  .
\end{aligned}
\end{equation*}
Furthermore, the following 3-D Sobolev embedding and interpolation inequalities could be derived:
\begin{eqnarray}
   \| \nabla \hat{\phi}_S^{n+1} \|_{L^\infty}
   &\le&  C  (  \| \Delta \hat{\phi}_S^{n+1} \|
   +  \| \Delta \hat{\phi}_S^{n+1} \|^\frac78
   \cdot \| \Delta^3  \hat{\phi}_S^{n+1} \|^\frac18  )
   \le C  (  \tilde{C}_1
   +  \tilde{C}_1^\frac78 \| \Delta^3  \hat{\phi}_S^{n+1} \|^\frac18  )  ,
   \label{H3 est-7-6}
\\
  \| \nabla \hat{\phi}_S^{n+1} \|_{H^3}
   &\le&  C  \| \Delta \hat{\phi}_S^{n+1} \|^\frac12
   \cdot \| \Delta^3  \hat{\phi}_S^{n+1} \|^\frac12
   \le C  \tilde{C}_1^\frac12 \| \Delta^3  \hat{\phi}_S^{n+1} \|^\frac12  ,
   \label{H3 est-7-7}
\\
  \| \nabla \nabla \hat{\phi}_S^{n+1} \|_{L^6}
   &\le&  C \| \nabla \nabla \hat{\phi}_S^{n+1} \|_{H^1}
   \le C  \| \Delta \hat{\phi}_S^{n+1} \|^\frac34
   \cdot \| \Delta^3  \hat{\phi}_S^{n+1} \|^\frac14
   \le C  \tilde{C}_1^\frac34 \| \Delta^3  \hat{\phi}_S^{n+1} \|^\frac14  ,
   \label{H3 est-7-8}
\\
     \| \nabla \nabla \hat{\phi}_S^{n+1} \|_{L^\infty}
   &\le&  C  (  \| \nabla \Delta \hat{\phi}_S^{n+1} \|
   +  \| \nabla \Delta \hat{\phi}_S^{n+1} \|^\frac56
   \cdot \| \Delta^3  \hat{\phi}_S^{n+1} \|^\frac16  )  \nonumber
\\
  &\le&
   C  (  \| \Delta \hat{\phi}_S^{n+1} \|^\frac34 \cdot \| \Delta^3 \hat{\phi}_S^{n+1} \|^\frac14
   +  ( \| \Delta \hat{\phi}_S^{n+1} \|^\frac34
   \cdot \| \Delta^3 \hat{\phi}_S^{n+1} \|^\frac14  )^\frac56
   \cdot \| \Delta^3  \hat{\phi}_S^{n+1} \|^\frac16  )    \nonumber
\\
  &\le&
    C  (  \tilde{C}_1^\frac34 \cdot \| \Delta^3 \hat{\phi}_S^{n+1} \|^\frac14
   +  \tilde{C}_1^\frac58 \| \Delta^3  \hat{\phi}_S^{n+1} \|^\frac38  )  ,
   \label{H3 est-7-9}
\\
  \| \nabla \hat{\phi}_S^{n+1} \|_{H^2}
   &\le&  C  \| \Delta \hat{\phi}_S^{n+1} \|^\frac34
   \cdot \| \Delta^3  \hat{\phi}_S^{n+1} \|^\frac14
   \le C  \tilde{C}_1^\frac34 \| \Delta^3  \hat{\phi}_S^{n+1} \|^\frac14  ,
   \label{H3 est-7-10}
\end{eqnarray}
in which the uniform in time $H^2$ bound~\eqref{SPFC-H2 stab-0} of the numerical solution has been extensively used. In turn, a substitution of the above estimates into~\eqref{H3 est-7-5} yields
\begin{eqnarray}
   \| \varphi_{\hat{q}^{n+1}} \|_{H^3}
   \le C  ( \tilde{C}_1^3
   + \tilde{C}_1^\frac94 \| \Delta^3  \hat{\phi}_S^{n+1} \|^\frac34 )  .
    \label{H3 est-7-11}
\end{eqnarray}
Subsequently, its combination with~\eqref{H3 est-7-2} and \eqref{H3 est-7-4} reveals that
\begin{eqnarray}
   \| \Delta_N \nabla_N \cdot ( | \nabla_N \hat{\phi}^{n+1} |^2
   \nabla_N \hat{\phi}^{n+1}  )   \|_2
    &\le& C  ( \tilde{C}_1^3
   + \tilde{C}_1^\frac94 \| \Delta^3  \hat{\phi}_S^{n+1} \|^\frac34 )  \nonumber
\\
  &\le& C  ( \tilde{C}_1^3
   + \tilde{C}_1^\frac94 \| \Delta_N^3  \hat{\phi}^{n+1} \|^\frac34 )  ,
    \label{H3 est-7-11-2}
\end{eqnarray}
in which the fact that $\hat{\phi}_S^{n+1} \in {\cal P}_K$ has been applied in the last step. As a consequence, we arrive at
\begin{eqnarray}
  &&
   2 \frac{r^{n+1}}{\sqrt{E_{1,N}(\hat{\phi}^{n+1})}}
   \langle \Delta_N \nabla_N \cdot ( | \nabla_N \hat{\phi}^{n+1} |^2
   \nabla_N \hat{\phi}^{n+1}  )  , \Delta_N^3 \phi^{n+1}  \rangle   \nonumber
\\
   &\le& 2  \Big( \frac{2 \tilde{C}_0}{ | \Omega | } \Big)^\frac12
   \| \Delta_N \nabla_N \cdot ( | \nabla_N \hat{\phi}^{n+1} |^2
   \nabla_N \hat{\phi}^{n+1}  )   \|_2  \cdot  \| \Delta_N^3 \phi^{n+1}  \|_2
   \nonumber
\\
   &\le&
   C  ( \tilde{C}_1^3
   + \tilde{C}_1^\frac94 \| \Delta_N^3  \hat{\phi}^{n+1} \|^\frac34 )
    \cdot  \| \Delta_N^3 \phi^{n+1}  \|_2
   \le C  ( \tilde{C}_1^6
   + \tilde{C}_1^\frac92 \| \Delta_N^3  \hat{\phi}^{n+1} \|^\frac32 )
   + \frac12 \| \Delta_N^3 \phi^{n+1}  \|_2^2  .   \label{H3 est-7-12}
\end{eqnarray}
A combination of~\eqref{H3 est-6} and \eqref{H3 est-7-12} leads to
\begin{eqnarray}
  &&
   - 2 \frac{r^{n+1}}{\sqrt{E_{1,N}(\hat{\phi}^{n+1})}}
   \langle \Delta_N N_N ( \hat{\phi}^{n+1} ) , \Delta_N^3 \phi^{n+1}  \rangle   \nonumber
\\
  &\le&
     \frac{16 \tilde{C}_0}{ | \Omega | }
     \| \Delta_N^2 \hat{\phi}^{n+1} \|_2^2
   + C  ( \tilde{C}_1^6
   + \tilde{C}_1^\frac92 \| \Delta_N^3  \hat{\phi}^{n+1} \|^\frac32 )
   +  \| \Delta_N^3 \phi^{n+1}  \|_2^2  .    \label{H3 est-8}
\end{eqnarray}

  Finally, a substitution of~\eqref{H3 est-2}, \eqref{H3 est-3} and \eqref{H3 est-8} into \eqref{H3 est-1} results in
\begin{eqnarray}
  &&
     \frac{1}{2 \dt} ( \| \nabla_N \Delta_N \phi^{n+1} \|_2^2
     - \| \nabla_N \Delta_N \phi^n \|_2^2
   + \| \nabla_N \Delta_N ( 2 \phi^{n+1} - \phi^n ) \|_2^2
   - \| \nabla_N \Delta_N ( 2 \phi^n - \phi^{n-1} ) \|_2^2  )  \nonumber
\\
  &&
    + 2  a \| \Delta_N^2 \phi^{n+1} \|_2^2 + \| \Delta_N^3 \phi^{n+1} \|_2^2
    \le  \frac{16 \tilde{C}_0}{ | \Omega | }
     \| \Delta_N^2 \hat{\phi}^{n+1} \|_2^2
   + C  ( \tilde{C}_1^6
   + \tilde{C}_1^\frac92 \| \Delta_N^3  \hat{\phi}^{n+1} \|^\frac32 )  .
   \label{H3 est-9-1}
\end{eqnarray}
Meanwhile, the following interpolation inequality and Cauchy inequality are available:
\begin{eqnarray}
    \| \Delta_N^2 \hat{\phi}^{n+1} \|_2
   &\le&  \| \Delta_N \hat{\phi}^{n+1} \|_2^\frac12
   \cdot \| \Delta_N^3  \hat{\phi}^{n+1} \|_2^\frac12
   \le  ( 3 \tilde{C}_1 )^\frac12 \| \Delta_N^3  \hat{\phi}^{n+1} \|_2^\frac12  ,
   \label{H3 est-9-2}
\\
   \| \Delta_N^3 \hat{\phi}^{n+1} \|_2^2
   &=&  \| \Delta_N^3 ( 2 \phi^n  - \phi^{n-1} ) \|_2^2
  = 4 \| \Delta_N^3  \phi^n \|_2^2
  + \| \Delta_N^3  \phi^{n-1} \|_2^2
  - 4 \langle \Delta_N^3 \phi^n , \Delta_N^2 \phi^{n-1}  \rangle \nonumber
\\
   &\le&
   4 \| \Delta_N^3  \phi^n \|_2^2
  + \| \Delta_N^3  \phi^{n-1} \|_2^2
  + 2 ( \| \Delta_N^3 \phi^n \|_2^2 + \| \Delta_N^2 \phi^{n-1} \|_2^2 ) \nonumber
\\
  &\le&  6 \| \Delta_N^3  \phi^n \|_2^2  + 3 \| \Delta_N^3  \phi^{n-1} \|_2^2 .
   \label{H3 est-9-3}
\end{eqnarray}
Then we obtain the following estimates:
\begin{eqnarray}
  \frac{16 \tilde{C}_0}{ | \Omega | }
     \| \Delta_N^2 \hat{\phi}^{n+1} \|_2^2
     &\le&   \frac{48 \tilde{C}_0 \tilde{C}_1}{ | \Omega | }
     \| \Delta_N^3  \hat{\phi}^{n+1} \|_2
     \le  \frac{48^2 \cdot 9 \tilde{C}_0^2 \tilde{C}_1^2}{ | \Omega |^2 }
     + \frac{1}{36} \| \Delta_N^3  \hat{\phi}^{n+1} \|_2^2   \nonumber
\\
  &\le&
       \frac{C \tilde{C}_0^2 \tilde{C}_1^2}{ | \Omega |^2 }
     + \frac16 \| \Delta_N^3  \phi^n \|_2^2
     + \frac{1}{12} \| \Delta_N^3  \phi^{n-1} \|_2^2  ,   \label{H3 est-9-4}
\\
   C \tilde{C}_1^\frac92 \| \Delta_N^3  \hat{\phi}^{n+1} \|^\frac32
   &\le&  C \tilde{C}_1^{18} +
    \frac{1}{36} \| \Delta_N^3  \hat{\phi}^{n+1} \|^2    \nonumber
\\
  &\le&
      C \tilde{C}_1^{18}
     + \frac16 \| \Delta_N^3  \phi^n \|_2^2
     + \frac{1}{12} \| \Delta_N^3  \phi^{n-1} \|_2^2  ,   \label{H3 est-9-5}
 \end{eqnarray}
 in which the Young's inequality has been applied in the first step of~\eqref{H3 est-9-5}. Going back~\eqref{H3 est-9-1}, we arrive at
\begin{eqnarray}
  &&
     \frac{1}{2 \dt} ( \| \nabla_N \Delta_N \phi^{n+1} \|_2^2
     - \| \nabla_N \Delta_N \phi^n \|_2^2
   + \| \nabla_N \Delta_N ( 2 \phi^{n+1} - \phi^n ) \|_2^2
   - \| \nabla_N \Delta_N ( 2 \phi^n - \phi^{n-1} ) \|_2^2  )  \nonumber
\\
  &&
     + 2  a \| \Delta_N^2 \phi^{n+1} \|_2^2 + \| \Delta_N^3 \phi^{n+1} \|_2^2
    \le  \frac13 \| \Delta_N^3  \phi^n \|_2^2
     + \frac16 \| \Delta_N^3  \phi^{n-1} \|_2^2
    + \frac{C \tilde{C}_0^2 \tilde{C}_1^2}{ | \Omega |^2 }
    + C_1 ( \tilde{C}_1^{18} + 1 )  .  \label{H3 est-9-6}
\end{eqnarray}
Moreover, the following quantity is introduced:
\begin{eqnarray}
  G^{n+1} := \frac12 ( \| \nabla_N \Delta_N \phi^{n+1} \|_2^2
       + \| \nabla_N \Delta_N ( 2 \phi^{n+1} - \phi^n ) \|_2^2 )
       + \frac23 \dt \| \Delta_N^3  \phi^{n+1} \|_2^2
     + \frac16 \dt \| \Delta_N^3  \phi^n \|_2^2 .   \label{H3 est-9-7}
\end{eqnarray}
By adding $\frac13 \| \Delta_N^3  \phi^n \|_2^2$ on both sides of~\eqref{H3 est-9-6}, we obtain the following inequality:
\begin{eqnarray}
  G^{n+1} - G^n + \frac13 \dt  \| \Delta_N^3 \phi^{n+1} \|_2^2
  + \frac16 \dt \| \Delta_N^3  \phi^n \|_2^2
  \le M^{(0)} \dt , \quad
    M^{(0)} =   \frac{C \tilde{C}_0^2 \tilde{C}_1^2}{ | \Omega |^2 }
    + C_1 ( \tilde{C}_1^{18} +1 ) .  \label{H3 est-9-8}
\end{eqnarray}
In addition, the following elliptic regularity estimates are valid, with an application of \eqref{elliptic regularity-0} in Proposition~\ref{prop:elliptic regularity} (by taking $C_2 = \hat{C}_0^{-2}$):
\begin{eqnarray}
    C_2 \| \nabla_N \Delta_N \phi^{n+1} \|_2^2  \le \| \Delta_N^3 \phi^{n+1} \|_2^2 ,  \quad
    C_2 \| \nabla_N \Delta_N \phi^n \|_2^2  \le \| \Delta_N^3 \phi^n \|_2^2  ,
     \label{H3 est-9-9}
\end{eqnarray}
so that we arrive at
\begin{eqnarray}
    \frac{1}{24}  C_2 G^{n+1}  \le \frac13  \| \Delta_N^3 \phi^{n+1} \|_2^2
  + \frac16  \| \Delta_N^3  \phi^n \|_2^2  .   \label{H3 est-9-10}
\end{eqnarray}
Going back~\eqref{H3 est-9-8}, we get
\begin{eqnarray}
  G^{n+1} - G^n + \frac{C_2}{24}  \dt G^{n+1}
    \le M^{(0)} \dt  .  \label{H3 est-9-11}
\end{eqnarray}
An application of induction argument implies that
\begin{eqnarray}
   G^{n+1} \le  ( 1 + \frac{C_2}{24}  \dt )^{-(n+1)} G^0
   +  \frac{24 M^{(0)} }{C_2} .  \label{H3 est-9-12}
\end{eqnarray}
Of course, we could introduce a uniform in time quantity $B_3^* := G^0
   +  \frac{24 M^{(0)} }{C_2}$, so that $\| \nabla_N \Delta_N \phi^m \|^2 \le 2 G^m \le 2 B_3^*$ for any $m \ge 0$. In turn, an application of elliptic regularity shows that
\begin{eqnarray}
     \| \phi_S^m \|_{H^3} \le C \Big( | \overline{\phi^m} | + \| \nabla \Delta \phi^m \| \Big)
     \le C ( | \beta_0 | + ( 2 B_3^* )^{1/2} ) := Q^{(3)} ,
     \quad \forall m \ge 0 . \label{H3 est-9-13}
\end{eqnarray}
in which the uniform in time constant $Q^{(3)}$ depends on $\Omega$ and the initial $H^3$ data. This finishes the proof of Theorem~\ref{thm: H3 est}.
\end{proof}

\begin{rem}
  Higher order $H^m$ estimate (beyond the norm given by the physical energy) is available for many gradient flows, due to the analytic property of the surface diffusion parabolic operator; see the related discussions in~\cite{ChenN16}. There have also been quite a few works of uniform in time $H^2$ estimate for certain energy stable numerical schemes for the Cahn-Hilliard equation~\cite{cheng16a, guo16, shen18b}, beyond the $H^1$ bound given by the energy estimate. Similar numerical estimates for also expected for epitaxial thin film growth and SPFC flows, in which the $H^2$ bound is given by the energy estimate, while an $H^3$ estimate could be derived with the help of higher order analysis, combined with Sobolev inequalities. In fact, similar estimates have also been reported for 2-D incompressible Navier-Stokes equations, in terms of the first, second and higher order temporal numerical approximations; see the delated works~\cite{cheng16b, gottlieb12a, WangX2012}, etc.
\end{rem}

	\section{The optimal rate convergence analysis}  \label{sec:convergence}

Now we proceed into the convergence analysis for the proposed numerical scheme~\eqref{scheme-SAV-1}. Due to the SAV structure of the algorithm, the error estimate has to be performed in the energy norm, i.e., in the $\ell^\infty (0,T; H_N^2) \cap \ell^2 (0, T; H_N^5)$ for the phase variable. Similar techniques have also been applied to the convergence estimate~\cite{LiXL19} for the SAV scheme applied to Cahn-Hilliard equation. These ideas have also been reported for the corresponding analysis for the phase field flow coupled with fluid motion~\cite{chen19a, chen16, diegel15a, diegel17, liuY17}. With an initial data with sufficient regularity, we could assume that the exact solution has regularity of class $\mathcal{R}$:
	\begin{equation}
\Phi \in \mathcal{R} := H^3 (0,T; C^0) \cap H^2 (0,T; H^4) \cap L^\infty (0,T; H^{m+6}).
	\label{assumption:regularity.1}
	\end{equation}
In particular, the following bound is available for the exact solution:
\begin{eqnarray}
  \| \partial_t^m \Phi \|_{L^\infty (0,T; L^\infty)} \le C^* ,  \, \, \, (1 \le m \le 3) , \quad
  \|  \Phi^k \|_{H^{m+6}} \le C^* ,  \, \, \, \forall k \ge 0 .
  \label{regularity-exact-1}
\end{eqnarray}
	
%We prove convergence only for the first scheme~\eqref{scheme-BDF-SPFC-1}; the details of the proof for the second are similar.
		
	\begin{thm}
	\label{thm:convergence}
Given initial data $\Phi_0 \in H_{\rm per}^{m+6} (\Omega)$, suppose the exact solution for SPFC equation~\eqref{equation-SPFC} is of regularity class $\mathcal{R}$. For $\dt$ and $h$ are sufficiently small, we have
	\begin{equation}
\max_{0\le n\le M}\| \Delta_N ( \Phi^n - \phi^n ) \|_2 +  ( \dt   \sum_{k=1}^{M} \| \nabla_N \Delta_N^2 ( \Phi^k - \phi^k ) \|_2^2 )^{1/2}  \le C ( \dt^2 + h^m ),
	\label{SPFC-convergence-0}
	\end{equation}
where $C>0$ is independent of $\dt$ and $h$, and $\dt = T/M$.
	\end{thm}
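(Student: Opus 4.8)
The plan is to run an energy-norm error estimate that mirrors the modified energy stability computation leading to~\eqref{eng stab-5}, but now applied to the error functions and forced by consistency residuals. First I would establish consistency of the scheme for the exact solution. Introducing the exact auxiliary variable $R^n := \sqrt{E_{1,N}(P_N \Phi^n)}$, and combining a Taylor expansion of the BDF2 stencil in time with the spectral approximation estimates afforded by the regularity class $\mathcal{R}$ in~\eqref{assumption:regularity.1} and the bounds~\eqref{regularity-exact-1}, I would show that the Fourier projection of $\Phi^n$ satisfies (\ref{scheme-SAV-1}a)--(\ref{scheme-SAV-1}b) up to local truncation errors $\tau^{n+1}$ and $\sigma^{n+1}$ obeying $\| \tau^{n+1} \|_2, | \sigma^{n+1} | \le C (\dt^2 + h^m)$ in the relevant norms; the projection error $\| \Phi^n - P_N \Phi^n \|$ is absorbed into the $O(h^m)$ contribution, and the initialization at the ``ghost'' level is already controlled by~\eqref{scheme-SAV-initial-2}. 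Subtracting the scheme from these consistency relations yields the error evolution equations for $\tilde{\phi}^n := P_N \Phi^n - \phi^n$ and $\tilde{r}^n := R^n - r^n$.

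Next, following the structure of the stability proof, I would take the discrete inner product of the phase error equation with the chemical potential error $\tilde{\mu}^{n+1}$, multiply the auxiliary error equation by $2 \tilde{r}^{n+1}$, and add. Exactly as in~\eqref{eng stab-4-1}--\eqref{eng stab-4-2}, the surface-diffusion part $\langle L_N \tilde{\phi}^{n+1}, \cdot \rangle$ telescopes into the $L_N^\frac12$ error norms, which by~\eqref{SPFC-H2 bound-2} control $\| \Delta_N \tilde{\phi}^{n+1} \|_2$ and furnish the $\ell^\infty(0,T;H_N^2)$ part of~\eqref{SPFC-convergence-0}. The diffusion inner product $\langle \Delta_N \tilde{\mu}^{n+1}, \tilde{\mu}^{n+1} \rangle = - \| \nabla_N \tilde{\mu}^{n+1} \|_2^2$ supplies the dissipation; since $\tilde{\mu}^{n+1} = a \tilde{\phi}^{n+1} + \Delta_N^2 \tilde{\phi}^{n+1} + \tilde{\mathcal{N}}^{n+1}$, the triangle inequality $\| \nabla_N \Delta_N^2 \tilde{\phi}^{n+1} \|_2 \le \| \nabla_N \tilde{\mu}^{n+1} \|_2 + a \| \nabla_N \tilde{\phi}^{n+1} \|_2 + \| \nabla_N \tilde{\mathcal{N}}^{n+1} \|_2$ converts the accumulated dissipation into the $\ell^2(0,T;H_N^5)$ part, \emph{provided} the gradient of the nonlinear chemical potential error is controllable. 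Crucially, and as announced in the introduction, the single nonlinear cross inner product generated by testing the phase equation against $\tilde{\mu}^{n+1}$ cancels against the term produced by multiplying the auxiliary equation by $2 \tilde{r}^{n+1}$, exactly as the matching terms did in the stability argument, so that no leading-order uncontrolled nonlinearity survives.

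The main obstacle is the estimate of $\tilde{\mathcal{N}}^{n+1}$ and, above all, of its discrete gradient $\nabla_N \tilde{\mathcal{N}}^{n+1}$, which feeds the $H_N^5$ recovery. This term decomposes into a coefficient contribution from $\tilde{r}^{n+1}$ and from the difference $E_{1,N}(\hat{\Phi}^{n+1}) - E_{1,N}(\hat{\phi}^{n+1})$, the linear part $2 \Delta_N$ applied to the extrapolated error, and the cubic 4-Laplacian argument difference $- \nabla_N \cdot ( | \nabla_N \hat{\Phi}^{n+1} |^2 \nabla_N \hat{\Phi}^{n+1} - | \nabla_N \hat{\phi}^{n+1} |^2 \nabla_N \hat{\phi}^{n+1} )$. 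To bound the gradient of the cubic difference in $\ell^2$, I would pass to the spectral interpolations, invoke the aliasing error control of Lemma~\ref{lemma:aliasing error}, and estimate the resulting $H^3$-type factors using the uniform-in-time $H^3$ bound of Theorem~\ref{thm: H3 est} and the $W_N^{1,6}$ bound~\eqref{SPFC-W16 est-0} for the numerical solution, together with~\eqref{regularity-exact-1} for the exact solution. This reduces $\| \nabla_N \tilde{\mathcal{N}}^{n+1} \|_2$ to a constant multiple of the lower-order error norms $\| \Delta_N \tilde{\phi}^n \|_2$ and $\| \Delta_N \tilde{\phi}^{n-1} \|_2$ (plus the $| \tilde{r}^{n+1} |$ contribution controlled through the auxiliary telescoping), quantities that can be absorbed into a small fraction of the dissipation or deferred to Gronwall. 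Here the extrapolation $\hat{\phi}^{n+1} = 2 \phi^n - \phi^{n-1}$ distributes the error onto the two previous time levels.

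Finally, I would assemble the telescoped energy identity, move the truncation contributions $\langle \tau^{n+1}, \tilde{\mu}^{n+1} \rangle$ and $\sigma^{n+1} \tilde{r}^{n+1}$ to the right-hand side through Cauchy--Schwarz and Young's inequalities (peeling off a small multiple of $\| \nabla_N \tilde{\mu}^{n+1} \|_2^2$ to be reabsorbed), collect the nonlinear remainders bounded above, and sum over the time steps. A discrete Gronwall inequality, valid once $\dt$ and $h$ are sufficiently small so that the absorbed terms do not overwhelm the dissipation, then delivers simultaneously the $\ell^\infty(0,T;H_N^2)$ bound on $\| \Delta_N \tilde{\phi}^n \|_2$ and the accumulated $\ell^2(0,T;H_N^5)$ bound on $\| \nabla_N \Delta_N^2 \tilde{\phi}^k \|_2$, both of order $O(\dt^2 + h^m)$, which is precisely the estimate~\eqref{SPFC-convergence-0}.
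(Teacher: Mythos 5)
Your proposal is correct and follows essentially the same route as the paper's proof: a consistency analysis with $O(\dt^2 + h^m)$ residuals, error equations for the phase and auxiliary variables, an energy-norm estimate whose key point is the exact cancellation of the $\tilde{r}^{n+1}$-weighted nonlinear cross inner product, bounds on the nonlinear error gradients via the aliasing-control lemma together with the uniform $H^3$ and $W_N^{1,6}$ bounds, absorption of $\| \nabla_N \Delta_N \hat{e}^{n+1} \|_2$ through interpolation and Young inequalities, and a discrete Gronwall argument. The only cosmetic difference is the choice of test function: you test the phase error equation with the chemical-potential error $\tilde{\mu}^{n+1}$ and recover $\| \nabla_N \Delta_N^2 e^{n+1} \|_2$ by a triangle inequality, whereas the paper tests with $(-\Delta_N)^{-1}$ applied to the BDF2 difference quotient and recovers $\| \nabla_N L_N e^{n+1} \|_2^2$ from the $\| \cdot \|_{-1,N}$ norm of that quotient via the error equation itself --- an equivalent computation.
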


\subsection{The consistency analysis} 	
	
For $\Phi \in \mathcal{R}$, we construct an approximate scalar value of $R$ as follows
\begin{eqnarray}
  R^{n+1} := \sqrt{E_{1,N} (\Phi^{n+1})} \, ,  \quad
  E_{1,N}^{n+1} (\Phi^{n+1}) = \frac14 \| \nabla_N \Phi^{n+1} \|_4^4
  - \| \nabla_N \Phi^{n+1} \|_2^2 + 2 | \Omega | .     \label{consistency-0}
\end{eqnarray}
A similar extrapolation $\hat{\Phi}^{n+1} := 2 \Phi^n - \Phi^{n-1}$ is taken.  In turn, a careful consistency analysis indicates the following truncation error estimate:
\begin{eqnarray} \label{consistency-1}
\begin{cases}
\frac{\frac32 \Phi^{n+1} - 2 \Phi^n +\frac12 \Phi^{n-1}}{\dt} = \Delta_N \Big( \frac{R^{n+1}}{\sqrt{ E_{1,N} (\hat{\Phi}^{n+1} ) }} N_N (\hat{\Phi}^{n+1}) + L_N \Phi^{n+1} \Big)
 + \tau_\phi^{n+1} , \ \ \ \ \ \ ({\theequation a}) \\
\frac{\frac32 R^{n+1} - 2 R^n + \frac12 R^{n-1}}{\dt} = \frac{1}{2\sqrt{ E_{1,N} (\hat{\Phi}^{n+1})}} \langle N_N ( \hat{\Phi}^{n+1}) , \frac{\frac32 \Phi^{n+1} - 2 \Phi^n + \frac12 \Phi^{n-1}}{\dt} \rangle + \tau_r^{n+1} .   \ \ \ \ \ \ ({\theequation b})
\end{cases}
\end{eqnarray}
with $\| \tau_\phi^{n+1} \|_2 , | \tau_r^{n+1} | \le C (\dt^2 + h^m)$. The derivation of~\eqref{consistency-1} is accomplished with the help of the spectral approximation estimate and other related estimates; the details are left to interested readers.

The numerical error function is defined at a point-wise level:
	\begin{eqnarray}
e^k := \Phi^k - \phi^k ,  \, \, \,  \tilde{N}^k := N_N (\hat{\Phi}^k) - N_N (\hat{\phi}^k) ,
\quad \forall k \ge 0 .
	\label{SPFC-error function-1}
	\end{eqnarray}
And also, the following scalar numerical errors are introduced
	\begin{eqnarray}
\tilde{r}^k := R^k - r^k ,  \, \, \,
\tilde{E}_1^k := E_{1,N} (\hat{\Phi}^k) - E_{1,N} (\hat{\phi}^k) ,
\quad \forall k \ge 0 .
	\label{SPFC-error function-2}
	\end{eqnarray}
In turn, subtracting the numerical scheme \eqref{scheme-SAV-1} from \eqref{consistency-1} gives
\begin{eqnarray} \label{consistency-2}
\begin{cases}
\frac{\frac32 e^{n+1} - 2 e^n +\frac12 e^{n-1}}{\dt} = \Delta_N \Big( ( \frac{ \tilde{r}^{n+1}}{\sqrt{ E_{1,N} (\hat{\phi}^{n+1} ) }}  - B^{n+1} R^{n+1} \tilde{E}_1^{n+1} )
N_N (\hat{\phi}^{n+1})
+ \frac{R^{n+1}}{\sqrt{ E_{1,N} (\hat{\Phi}^{n+1} ) }}  \tilde{N}^{n+1}  \\
  \qquad \qquad \qquad \qquad  \quad
   + L_N e^{n+1} \Big) + \tau_\phi^{n+1} , \ \ \ \ \ \ ({\theequation a}) \\
\frac{\frac32 \tilde{r}^{n+1} - 2 \tilde{r}^n + \frac12 \tilde{r}^{n-1}}{\dt} = \frac{1}{2\sqrt{ E_{1,N} (\hat{\phi}^{n+1})}} \langle N_N ( \hat{\phi}^{n+1}) , \frac{\frac32 e^{n+1} - 2 e^n + \frac12 e^{n-1}}{\dt} \rangle \\
 \qquad \qquad \qquad \qquad \quad
  + \frac{1}{2\sqrt{ E_{1,N} (\hat{\phi}^{n+1})}} \langle \tilde{N}^{n+1} , \frac{\frac32 \Phi^{n+1} - 2 \Phi^n + \frac12 \Phi^{n-1}}{\dt} \rangle  \\
  \qquad \qquad \qquad \qquad \quad
  - \frac12 B^{n+1} \tilde{E}_1^{n+1}  \langle N_N ( \hat{\Phi}^{n+1})  , \frac{\frac32 \Phi^{n+1} - 2 \Phi^n + \frac12 \Phi^{n-1}}{\dt} \rangle
+ \tau_r^{n+1}  ,  \ \ \ \ \ \ ({\theequation b}) \\
\mbox{with} \, \, \, B^{n+1} = \frac{1}{\sqrt{ E_{1,N} (\hat{\Phi}^{n+1} ) } \sqrt{ E_{1,N}  (\hat{\phi}^{n+1}) } ( \sqrt{ E_{1,N} (\hat{\Phi}^{n+1} ) }
+ \sqrt{ E_{1,N}  (\hat{\phi}^{n+1}) } ) }  . \quad ({\theequation c})
\end{cases}
\end{eqnarray}

\subsection{A few preliminary estimates}

The following estimates are needed in the later analysis.

\begin{lem} \label{lem: pre est}
We have
\begin{eqnarray}
  &&
   E_{1,N} (\hat{\phi}^{n+1} ) \ge | \Omega | ,  \, \, \,
   E_{1,N} (\hat{\Phi}^{n+1} ) \ge | \Omega | , \, \, \,
   0 \le B^{n+1} \le \frac12 | \Omega |^{-\frac32} ,  \label{pre est-0-1}
\\
   &&
   | \tilde{E}_1^{n+1} | \le \tilde{C}_2 \| \nabla_N \hat{e}^{n+1} \|_2 ,
   \label{pre est-0-2}
\\
   &&
   \| \nabla_N  N_N (\hat{\phi}^{n+1}) \| \le \tilde{C}_3 ,
   \label{pre est-0-3}
\\
   &&
   \| \nabla_N \tilde{N}^{n+1} \| \le \tilde{C}_4 \| \nabla_N \Delta_N \hat{e}^{n+1} \|_2 ,
   \label{pre est-0-4}
\\
  &&
   \| \nabla_N \Delta_N f \|_2 \le \| \Delta_N f \|_2^\frac23
   \cdot \| \nabla_N \Delta_N^2 f \|_2^\frac13 ,   \, \, \,
   \| \nabla_N \Delta_N^2 f \|_2 \le \| \nabla_N L_N f \|_2 ,  \quad
   \forall f \in  \mathcal{G}_N  ,  \label{pre est-0-5}
\\
   &&
   \|  \frac{\frac32 \Phi^{n+1} - 2 \Phi^n + \frac12 \Phi^{n-1}}{\dt}  \|_{-1,N} , \, \,
   \| \frac{\frac32 \Phi^{n+1} - 2 \Phi^n + \frac12 \Phi^{n-1}}{\dt}  \|_2 \le C C^* ,
    \label{pre est-0-6}
\end{eqnarray}
in which $\hat{e}^{n+1} := \hat{\Phi}^{n+1} - \hat{\phi}^{n+1} = 2 e^n - e^{n-1}$, and $\tilde{C}_j$ are independent of $\dt$ and $h$, $j=2, 3, 4$.
\end{lem}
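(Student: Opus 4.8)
The plan is to dispatch the six displayed estimates \eqref{pre est-0-1}--\eqref{pre est-0-6} one group at a time, sorted by the tool each requires. The lower bounds in \eqref{pre est-0-1} are immediate consequences of the point-wise quadratic inequality \eqref{energy-SPFC-split-2-1}: applied at every grid point and summed against $h^3$, it gives $\frac14 \| \nabla_N \psi \|_4^4 - \| \nabla_N \psi \|_2^2 + |\Omega| \ge 0$ for any $\psi \in \mathcal{G}_N$, hence $E_{1,N}(\psi) \ge |\Omega|$; specializing to $\psi = \hat{\phi}^{n+1}$ and $\psi = \hat{\Phi}^{n+1}$ yields the two stated bounds. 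Inserting these into the denominator of the definition (\ref{consistency-2}c), which is then bounded below by $2 |\Omega|^{3/2}$, produces $0 \le B^{n+1} \le \frac12 |\Omega|^{-3/2}$, positivity being clear since every factor is positive.

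For \eqref{pre est-0-2} I would write $\tilde{E}_1^{n+1}$ as the difference of quartic and quadratic gradient norms and factor each piece point-wise. Setting $u = \nabla_N \hat{\Phi}^{n+1}$ and $v = \nabla_N \hat{\phi}^{n+1}$, the identities $|u|^2 - |v|^2 = (u+v)\cdot(u-v)$ and $|u|^4 - |v|^4 = (|u|^2+|v|^2)(u+v)\cdot(u-v)$ reduce both differences to inner products of a bounded factor against $u - v = \nabla_N \hat{e}^{n+1}$. The factors $|u|^2 + |v|^2$ and $u+v$ are controlled in $\ell^\infty$ by the uniform-in-time $H^3$ bound of Theorem~\ref{thm: H3 est} (for $\phi$) and the regularity \eqref{regularity-exact-1} (for $\Phi$), after the Sobolev embedding $H^2 \hookrightarrow L^\infty$ for the gradient; a single Cauchy--Schwarz step then gives $|\tilde{E}_1^{n+1}| \le \tilde{C}_2 \| \nabla_N \hat{e}^{n+1} \|_2$.

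Estimates \eqref{pre est-0-3} and \eqref{pre est-0-4} form the core of the lemma, and \eqref{pre est-0-4} is where I expect the main obstacle. For \eqref{pre est-0-3} I expand $\nabla_N N_N(\hat{\phi}^{n+1})$: the linear piece $2 \nabla_N \Delta_N \hat{\phi}^{n+1}$ is bounded by the $H^3$ norm, while the nonlinear piece $\nabla_N \nabla_N \cdot ( | \nabla_N \hat{\phi}^{n+1} |^2 \nabla_N \hat{\phi}^{n+1} )$ is handled exactly as in the proof of Theorem~\ref{thm: H3 est}: pass to the spectral interpolant, invoke the aliasing control of Lemma~\ref{lemma:aliasing error} as in \eqref{H3 est-7-2}--\eqref{H3 est-7-4}, expand the derivatives by the product rule, and bound each resulting term by H\"older using $L^\infty$, $L^6$, $H^1$ gradient norms all dominated by $Q^{(3)}$ and the $W_N^{1,6}$ bound of Remark~\ref{rem:W16 est}. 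For \eqref{pre est-0-4} the same machinery is applied to the difference $\tilde{N}^{n+1} = N_N(\hat{\Phi}^{n+1}) - N_N(\hat{\phi}^{n+1})$, but now a Lipschitz-type bound isolating a factor of $\hat{e}^{n+1}$ is needed. The hard part is purely bookkeeping: writing $|\nabla\hat{\Phi}|^2\nabla\hat{\Phi} - |\nabla\hat{\phi}|^2\nabla\hat{\phi}$ as a sum of terms each carrying one factor of $\nabla \hat{e}^{n+1}$ with the other two gradient factors drawn from $\hat{\Phi}$ or $\hat{\phi}$, applying $\nabla_N\nabla_N\cdot$, and checking that the top-order contribution has the form $|\nabla\hat{\phi}|^2 \nabla\Delta \hat{e}^{n+1}$, controlled by $\| \nabla\hat{\phi} \|_{L^\infty}^2 \| \nabla_N\Delta_N \hat{e}^{n+1} \|_2$, while every lower-order term is also dominated by $\| \nabla_N \Delta_N \hat{e}^{n+1} \|_2$ through the uniform gradient bounds; the aliasing inequality again transfers the $H^r$ estimate from the interpolant back to the grid function.

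Finally, \eqref{pre est-0-5} is a pair of Fourier-space inequalities read off from the eigenvalue representations \eqref{spectral-coll-4-b} and \eqref{spectral-coll-6}. Writing $\lambda = \lambda_{\ell,m,n}$, so that $\| \nabla_N\Delta_N f \|_2^2 = C\sum \lambda^3 |\hat{f}|^2$, $\| \Delta_N f \|_2^2 = C\sum \lambda^2 |\hat{f}|^2$ and $\| \nabla_N\Delta_N^2 f \|_2^2 = C\sum \lambda^5 |\hat{f}|^2$ with a common Parseval constant, the interpolation inequality follows from the exponent identity $\lambda^3 = (\lambda^2)^{2/3}(\lambda^5)^{1/3}$ and H\"older's inequality with exponents $3/2$ and $3$; the second inequality follows from the mode-wise comparison $\lambda (a+\lambda^2)^2 \ge \lambda\cdot\lambda^4 = \lambda^5$, valid since $a > 0$. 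For \eqref{pre est-0-6}, the BDF combination is a second-order approximation to $\partial_t\Phi(t^{n+1})$, so a Taylor expansion and \eqref{regularity-exact-1} give the $\| \cdot \|_2 \le C C^*$ bound; mass conservation of the exact flow makes the combination mean-zero, whence $\| \cdot \|_{-1,N} = \| (-\Delta_N)^{-1/2}(\cdot) \|_2 \le (2\pi)^{-1} \| \cdot \|_2 \le C C^*$, using that the smallest nonzero Laplacian eigenvalue is $4\pi^2$.
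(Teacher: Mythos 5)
Your proposal is correct and, for the core of the lemma, follows the same route as the paper: \eqref{pre est-0-1} from the point-wise quadratic inequality, \eqref{pre est-0-3} and \eqref{pre est-0-4} via the spectral interpolant, the aliasing control of Lemma~\ref{lemma:aliasing error}, product-rule/H\"older expansions and the uniform $H^3$ bound of Theorem~\ref{thm: H3 est}, and \eqref{pre est-0-6} from the regularity of $\Phi$. Two local steps differ, both legitimately. For \eqref{pre est-0-2} the paper avoids $\ell^\infty$ bounds altogether: it applies the discrete H\"older inequality with three $\ell^6$ factors and one $\ell^2$ factor, so that only the $W_N^{1,6}$ bound \eqref{SPFC-W16 est-0} (a consequence of the $H_N^2$ energy bound) is needed, whereas you invoke the stronger $H^3$ estimate through $H^2\hookrightarrow L^\infty$; since Theorem~\ref{thm: H3 est} precedes the lemma, both are available, and the difference is only one of economy. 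For the first inequality in \eqref{pre est-0-5} the paper works in physical space, using two summation-by-parts/Cauchy--Schwarz steps, $\|\nabla_N\Delta_N f\|_2^2 \le \|\Delta_N f\|_2\,\|\Delta_N^2 f\|_2$ and $\|\Delta_N^2 f\|_2^2\le \|\nabla_N\Delta_N f\|_2\,\|\nabla_N\Delta_N^2 f\|_2$, followed by an algebraic rearrangement, while your Parseval--H\"older argument with the exponent split $\lambda^3=(\lambda^2)^{2/3}(\lambda^5)^{1/3}$ reaches the same inequality in one line; your version is arguably cleaner and uses exactly the mechanism the paper itself employs for the second inequality in \eqref{pre est-0-5}. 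One step you should make explicit in \eqref{pre est-0-4}: the lower-order error terms (carrying $\|\nabla\nabla\hat e_S^{n+1}\|_{L^4}$, $\|\nabla \hat e_S^{n+1}\|_{L^\infty}$, and the like) are bounded by $\|\nabla_N\Delta_N\hat e^{n+1}\|_2$ only after collecting them into $\|\nabla\hat e_S^{n+1}\|_{H^2}$ and invoking the elliptic regularity $\|\nabla\hat e_S^{n+1}\|_{H^2}\le C\|\nabla\Delta\hat e_S^{n+1}\|$, which is valid because $\nabla\hat e_S^{n+1}$ has zero mean; this is precisely how the paper closes \eqref{pre est-0-4}, and your phrase ``through the uniform gradient bounds'' should be replaced by that argument. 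Incidentally, your constant $B^{n+1}\le\frac12|\Omega|^{-\frac32}$ is the one matching the lemma statement; the paper's proof text quotes $2|\Omega|^{-\frac32}$, which is a typo.
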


\begin{proof}
   The lower bound for $E_{1,N} (\hat{\phi}^{n+1} )$ and $E_{1,N} (\hat{\Phi}^{n+1} )$ comes from their definition, and the estimate $0 \le B^{n+1} \le 2 | \Omega |^{-\frac32}$ is a direct result of its representation given by~(\ref{consistency-2}c).

     Moreover, a detailed expansion for $E_{1,N} (\hat{\phi}^{n+1} )$ and $E_{1,N} (\hat{\Phi}^{n+1} )$ implies that
\begin{eqnarray}
   \tilde{E}_1^{n+1} &=&  E_{1,N} (\hat{\Phi}^{n+1} ) - E_{1,N} (\hat{\phi}^{n+1} )
   \nonumber
\\
  &=&
     \frac14 ( \| \nabla_N \hat{\Phi}^{n+1} \|_4^4 - \| \nabla_N \hat{\phi}^{n+1} \|_4^4 )
   -  ( \| \nabla_N \hat{\Phi}^{n+1} \|_2^2 - \| \nabla_N \hat{\phi}^{n+1} \|_2^2 )
   \nonumber
\\
  &=&
   \frac14 \langle|  \nabla_N \hat{\Phi}^{n+1} |^2 + | \nabla_N \hat{\phi}^{n+1} |^2 ,
   \nabla_N ( \hat{\Phi}^{n+1} + \hat{\phi}^{n+1} ) \cdot \nabla_N \hat{e}^{n+1} \rangle
   \nonumber
\\
  &&
   -  \langle \nabla_N ( \hat{\Phi}^{n+1} + \hat{\phi}^{n+1} ) ,
      \nabla_N \hat{e}^{n+1} \rangle .  \label{pre est-1-1}
\end{eqnarray}
For the first error expansion, an application of discrete H\"older inequality shows that
\begin{eqnarray}
   &&
   \frac14 \Big| \langle|  \nabla_N \hat{\Phi}^{n+1} |^2 + | \nabla_N \hat{\phi}^{n+1} |^2 ,
   \nabla_N ( \hat{\Phi}^{n+1} + \hat{\phi}^{n+1} )
   \cdot \nabla_N \hat{e}^{n+1} \rangle  \Big|  \nonumber
\\
  &\le&
  \frac14 ( \| \nabla_N \hat{\Phi}^{n+1} \|_6^2 + \| \nabla_N \hat{\phi}^{n+1} \|_6^2 )
   \cdot ( \| \nabla_N \hat{\Phi}^{n+1} \|_6  + \| \nabla_N \hat{\phi}^{n+1} \|_6 )
    \cdot \| \nabla_N \hat{e}^{n+1} \|_2   \nonumber
\\
  &\le&
  \frac14 ( (C^*)^2 + C \tilde{C}_1^2 )
   \cdot ( C^* + C \tilde{C}_1 )  \cdot \| \nabla_N \hat{e}^{n+1} \|_2
   \le C ( (C^*)^3 + \tilde{C}_1^3 ) \| \nabla_N \hat{e}^{n+1} \|_2  ,
   \label{pre est-1-2}
\end{eqnarray}
in which the regularity assumption~\eqref{regularity-exact-1} for the exact solution and the discrete $W^{1,6}$ bound \eqref{SPFC-W16 est-0} for the numerical solution have been applied. The second error expansion term in~\eqref{pre est-1-1} could be controled in an even simpler way:
\begin{eqnarray}
   \Big| \langle| \nabla_N ( \hat{\Phi}^{n+1} + \hat{\phi}^{n+1} )  ,
   \nabla_N \hat{e}^{n+1} \rangle  \Big|
  &\le&
   ( \| \nabla_N \hat{\Phi}^{n+1} \|_2  + \| \nabla_N \hat{\phi}^{n+1} \|_2 )
    \cdot \| \nabla_N \hat{e}^{n+1} \|_2   \nonumber
\\
  &\le&
   ( C^* + C \tilde{C}_1 )  \| \nabla_N \hat{e}^{n+1} \|_2  ,
   \label{pre est-1-3}
\end{eqnarray}
with~\eqref{regularity-exact-1}, \eqref{SPFC-W16 est-0}, applied again. This comletes the proof of inequality~\eqref{pre est-0-2}, by setting $\tilde{C}_2 := C  ( (C^*)^3 + \tilde{C}_1^3 + C^* + \tilde{C}_1 )$.

  To obtain a discrete $\ell^2$ estimate for $\nabla_N N_N (\hat{\phi}^{n+1})$, we recall the grid function $\hat{q}^{n+1}$ introduced in~\eqref{H3 est-7-1}, so that the following identity is valid:
\begin{eqnarray}
   \| \nabla_N \nabla_N \cdot ( | \nabla_N \hat{\phi}^{n+1} |^2
   \nabla_N \hat{\phi}^{n+1}  )   \|_2
   =    \| \nabla ( \nabla \cdot \hat{q}_S^{n+1}  ) \|_{L^2} ,  \label{pre est-3-1}
\end{eqnarray}
in which $\hat{q}_S^{n+1}$ is the spectral interpolation of $\hat{q}^{n+1}$. Because of the  the fact $\hat{q}_S^{n+1} = R_N ( \varphi_{\hat{q}^{n+1}} )$, as indicated by the point-wise interpolation given by~\eqref{H3 est-7-3}, we make use of the aliasing error control inequality in Lemma~\ref{lemma:aliasing error} and get
\begin{eqnarray}
    \| \nabla ( \nabla \cdot \hat{q}_S^{n+1}  ) \|_{L^2}
    \le  \| \hat{q}_S^{n+1} \|_{H^2}  = \| R_N ( \varphi_{\hat{q}^{n+1}} ) \|_{H^2}
    \le 3^\frac32 \| \varphi_{\hat{q}^{n+1}} \|_{H^2}  ,
    % \quad \mbox{since $\varphi_{\hat{q}^{n+1}}  \in {\cal P}_{3 K}$ } .
     \label{pre est-3-2}
\end{eqnarray}
an inequality similar to~\eqref{H3 est-7-4}. Moreover, a detailed expansion and repeated applications of H\"older inequality lead to
\begin{eqnarray}
   \| \varphi_{\hat{q}^{n+1}} \|_{H^2}
   &\le& C (  \| \varphi_{\hat{q}^{n+1}} \| + \| \Delta \varphi_{\hat{q}^{n+1}} \| )
   = C (  \| | \nabla \hat{\phi}_S^{n+1} |^2 \nabla \hat{\phi}_S^{n+1}  \|
   + \| \Delta ( | \nabla \hat{\phi}_S^{n+1} |^2 \nabla \hat{\phi}_S^{n+1} ) \| )
   \nonumber
\\
  &\le&
  C \Big(  \| \nabla \hat{\phi}_S^{n+1} \|_{L^\infty}^2
   \cdot \| \nabla \hat{\phi}_S^{n+1}  \|_{H^2}
   +   \| \nabla \hat{\phi}_S^{n+1} \|_{L^\infty}  \cdot
   \| \nabla \nabla \hat{\phi}_S^{n+1} \|_{L^4}^2  \Big)  \nonumber
\\
  &\le&
  C  \| \nabla \hat{\phi}_S^{n+1} \|_{H^2}^3
   \le   C  \| \hat{\phi}_S^{n+1} \|_{H^3}^3
   \le C ( Q^{(3)} )^3 ,  \label{pre est-3-3}
\end{eqnarray}
in which the uniform in time $H^3$ estimate~\eqref{H3 est-0} (for the numerical solution) has been applied in the last step. Going back~\eqref{pre est-3-2} and \eqref{pre est-3-1}, we arrive at
\begin{eqnarray}
   \| \nabla_N \nabla_N \cdot ( | \nabla_N \hat{\phi}^{n+1} |^2
   \nabla_N \hat{\phi}^{n+1}  )   \|_2
   \le C ( Q^{(3)} )^3 .   \label{pre est-3-4}
\end{eqnarray}
The other expansion term in $\nabla_N N_N (\hat{\phi}^{n+1})$ could be bounded in a more standard way:
\begin{eqnarray}
   \| 2 \nabla_N \Delta_N  \hat{\phi}^{n+1} \|_2
   \le 2 \| \hat{\phi}_S^{n+1} \|_{H^3}  \le 6 Q^{(3)}  .   \label{pre est-3-5}
\end{eqnarray}
Therefore, a combination of~\eqref{pre est-3-4} and \eqref{pre est-3-5} gives the inequality~\eqref{pre est-0-3}, by taking $\tilde{C}_3 = C ( Q^{(3)} )^3 + 6 Q^{(3)}$.

Inequality~\eqref{pre est-0-4} could be derived in a similar manner. Making a comparison between $N_N (\hat{\Phi}^{n+1})$ and $N_N (\hat{\phi}^{n+1})$, we observe that $\tilde{N}^{n+1}$ turns out to be the point-wise interpolation of the following continuous function
\begin{eqnarray}
  \tilde{N}_S^{n+1} = -\nabla \cdot ( R_N ( \varphi_{\tilde{N}^{n+1}} )
   + 2 \Delta \hat{e}_S^{n+1}  ,   \quad
  \varphi_{\tilde{N}^{n+1}} :=  | \nabla \hat{\Phi}_S^{n+1} |^2
   \nabla \hat{\Phi}_S^{n+1} - | \nabla \hat{\phi}_S^{n+1} |^2
   \nabla \hat{\phi}_S^{n+1}  ,   \label{pre est-4-1}
\end{eqnarray}
with $\hat{\Phi}_S^{n+1} = 2 \Phi_S^n - \Phi_S^{n-1}$, $\hat{e}_S^{n+1} = 2 e_S^n -e_S^{n-1}$. A similar expansion is available for $\varphi_{\tilde{N}^{n+1}}$:
\begin{eqnarray}
  \varphi_{\tilde{N}^{n+1}} =  | \nabla \hat{\Phi}_S^{n+1} |^2
   \nabla \hat{e}_S^{n+1} +  ( \nabla ( \hat{\Phi}_S^{n+1} + \hat{\phi}_S^{n+1} )
   \cdot \nabla \hat{e}_S^{n+1} )  \nabla \hat{\phi}_S^{n+1}  .   \label{pre est-4-2}
\end{eqnarray}
Again, repeated applications of H\"older inequality gives the following estimates
\begin{eqnarray}
\begin{aligned}
  &
  \| \varphi_{\tilde{N}^{n+1}} \|
  \le
     \| | \nabla \hat{\Phi}_S^{n+1} |^2 \nabla \hat{e}_S^{n+1}  \|
    + \|  ( \nabla ( \hat{\Phi}_S^{n+1} + \hat{\phi}_S^{n+1} )
   \cdot \nabla \hat{e}_S^{n+1} )  \nabla \hat{\phi}_S^{n+1} \|
\\
  &  \qquad \qquad
   \le
     \| \nabla \hat{\Phi}_S^{n+1} \|_{L^\infty}^2 \cdot \| \nabla \hat{e}_S^{n+1}  \|
    + (  \| \nabla \hat{\Phi}_S^{n+1} \|_{L^\infty} + \| \nabla \hat{\phi}_S^{n+1} \|_{L^\infty} )
    \| \nabla \hat{e}_S^{n+1} \| \cdot \|  \nabla \hat{\phi}_S^{n+1} \|_{L^\infty}
\\
   &  \qquad \qquad
     \le C ( \| \nabla \hat{\Phi}_S^{n+1} \|_{H^2}^2
     + (  \| \nabla \hat{\Phi}_S^{n+1} \|_{H^2} + \| \nabla \hat{\phi}_S^{n+1} \|_{H^2} )^2 )
     \| \nabla \hat{e}_S^{n+1}  \| ,
\\
  &
  \Delta ( | \nabla \hat{\Phi}_S^{n+1} |^2 \nabla \hat{e}_S^{n+1}  )
  =
  | \nabla \hat{\Phi}_S^{n+1} |^2 ( \nabla \Delta \hat{e}_S^{n+1} )
  + 2 ( \nabla \hat{\Phi}_S^{n+1} ) ( \nabla \nabla \hat{\Phi}_S^{n+1} )
  ( \nabla \nabla \hat{e}_S^{n+1} )
\\
 &  \qquad \qquad \qquad
 + 2 \Delta \hat{\Phi}_S^{n+1} ( \nabla \nabla \hat{\Phi}_S^{n+1} )
  ( \nabla \hat{e}_S^{n+1} )
  + 2 ( \nabla \hat{\Phi}_S^{n+1} \cdot \nabla \Delta \hat{\Phi}_S^{n+1} )
  ( \nabla \hat{e}_S^{n+1} )  ,
\\
  &
  \| \Delta ( | \nabla \hat{\Phi}_S^{n+1} |^2 \nabla \hat{e}_S^{n+1}  )   \|
  \le
  \| \nabla \hat{\Phi}_S^{n+1} \|_{L^\infty}^2 \cdot \| \nabla \Delta \hat{e}_S^{n+1} \|
  + 2 \| \nabla \hat{\Phi}_S^{n+1} \|_{L^\infty} \cdot \| \nabla \nabla \hat{\Phi}_S^{n+1} \|_{L^4}
  \cdot \| \nabla \nabla \hat{e}_S^{n+1} \|_{L^4}
\\
 &  \qquad
 + C \| \Delta \hat{\Phi}_S^{n+1} \|_{L^4} \cdot \| \nabla \nabla \hat{\Phi}_S^{n+1} \|_{L^4}
  \cdot \| \nabla \hat{e}_S^{n+1} \|_{L^\infty}
  + C \| \nabla \hat{\Phi}_S^{n+1} \|_{L^\infty} \cdot \| \nabla \Delta \hat{\Phi}_S^{n+1} \|
  \cdot \| \nabla \hat{e}_S^{n+1} \|_{L^\infty}
\\
  &   \qquad  \qquad \qquad \qquad  \qquad
  \le  C \| \nabla \hat{\Phi}_S^{n+1} \|_{H^2}^2  \cdot \| \nabla \hat{e}_S^{n+1} \|_{H^2}  ,
\\
  &
  \| \Delta ( ( \nabla ( \hat{\Phi}_S^{n+1} + \hat{\phi}_S^{n+1} )
   \cdot \nabla \hat{e}_S^{n+1} )  \nabla \hat{\phi}_S^{n+1} ) \|
\\
  &   \qquad \qquad
   \le  C ( \| \nabla \hat{\Phi}_S^{n+1} \|_{H^2}^2
     + \| \nabla \hat{\phi}_S^{n+1} \|_{H^2}^2   )
     \| \nabla \hat{e}_S^{n+1} \|_{H^2} ,  \quad \mbox{(by a similar analysis)} ,
 \\
  &
  \| \Delta \varphi_{\tilde{N}^{n+1}} \|
  \le
    C \Big(  \| \Delta ( | \nabla \hat{\Phi}_S^{n+1} |^2 \nabla \hat{e}_S^{n+1} ) \|
   + \| \Delta ( ( \nabla ( \hat{\Phi}_S^{n+1} + \hat{\phi}_S^{n+1} )
   \cdot \nabla \hat{e}_S^{n+1} )  \nabla \hat{\phi}_S^{n+1} ) \|   \Big)
\\
  &  \qquad \qquad \, \, \,
  \le
  C   ( \| \nabla \hat{\Phi}_S^{n+1} \|_{H^2}
     + \| \nabla \hat{\phi}_S^{n+1} \|_{H^2}  )^2
   \cdot \| \nabla \hat{e}_S^{n+1} \|_{H^2}  ,
\\
  &
   \| \varphi_{\tilde{N}^{n+1}} \|_{H^2}
   \le C (  \| \varphi_{\tilde{N}^{n+1}} \| + \| \Delta \varphi_{\tilde{N}^{n+1}} \| )
\\
  &  \qquad \qquad \quad
  \le  C  ( \| \nabla \hat{\Phi}_S^{n+1} \|_{H^2}
     + \| \nabla \hat{\phi}_S^{n+1} \|_{H^2}  )^2
   \cdot \| \nabla \hat{e}_S^{n+1} \|_{H^2}
\\
  &  \qquad \qquad \quad
   \le
  C   ( \| \nabla \hat{\Phi}_S^{n+1} \|_{H^2}
     + \| \nabla \hat{\phi}_S^{n+1} \|_{H^2}  )^2
   \cdot \| \nabla \hat{e}_S^{n+1} \|_{H^2}
\\
  &  \qquad \qquad \quad
  \le
    C ( (C^*)^2 + ( Q^{(3)} )^2 )  \| \nabla \hat{e}_S^{n+1} \|_{H^2} ,
\end{aligned}
   \label{pre est-4-3}
\end{eqnarray}
with the uniform in time $H^3$ estimate~\eqref{H3 est-0} and the regularity assumption~\eqref{regularity-exact-1} recalled. Also notice that the 3-D Sobolev embedding, from $H^2$ to $L^\infty$ and $W^{1,4}$, has also been repeatedly applied in the derivation of~\eqref{pre est-4-3}. Since $\varphi_{\tilde{N}^{n+1}} \in {\cal P}_{3K}$, we go back~\eqref{pre est-4-1} and arrive at
\begin{eqnarray}
  \| \nabla_N \tilde{N}^{n+1} \|_2
  &=& \| \nabla \tilde{N}_S^{n+1} \|
  = \| \nabla ( - \nabla \cdot ( R_N ( \varphi_{\tilde{N}^{n+1}} ) ) )
   + 2 \Delta \hat{e}_S^{n+1} \|     \nonumber
\\
  &\le&
    3^\frac32 \| \varphi_{\tilde{N}^{n+1}} \|_{H^2}  +  2 \| \Delta \hat{e}_S^{n+1} \|
    \le C ( (C^*)^2 + ( Q^{(3)} )^2 )  \| \nabla \hat{e}_S^{n+1} \|_{H^2}
    + 2   \| \nabla \hat{e}_S^{n+1} \|_{H^2}    \nonumber
\\
  &\le&
       C ( (C^*)^2 + ( Q^{(3)} )^2 +1 )  \| \nabla \hat{e}_S^{n+1} \|_{H^2}  \nonumber
\\
  &\le&
       C ( (C^*)^2 + ( Q^{(3)} )^2 +1 )  \| \nabla \Delta \hat{e}_S^{n+1} \|  \nonumber
\\
  &\le&
       C ( (C^*)^2 + ( Q^{(3)} )^2 +1 )  \| \nabla_N \Delta_N \hat{e}^{n+1} \|_2 ,
   \label{pre est-4-4}
\end{eqnarray}
in which the elliptic regularity, $\| \nabla \hat{e}_S^{n+1} \|_{H^2}
\le C \| \nabla \Delta \hat{e}_S^{n+1} \|$, has been applied in the fourth step, due to the fact that $\int_\Omega \,   \nabla \hat{e}_S^{n+1} \, d {\bf x} =0$, and the last step comes from the fact that $\hat{e}_S^{n+1}$ is the spectral interpolation function of $\hat{e}^{n+1}$. This completes the proof of inequality~\eqref{pre est-0-4}, by setting $\tilde{C}_4 = C ( (C^*)^2 + ( Q^{(3)} )^2 +1 )$.

For the first inequality in~\eqref{pre est-0-5}, we see that an application of the summation by parts formula~\eqref{spectral-coll-inner product-3} gives
\begin{equation}
  \| \nabla_N \Delta_N f \|_2^2 = - \langle \Delta_N f , \Delta_N^2 f \rangle
  \le \| \Delta_N f \|_2 \cdot \| \Delta_N^2 f \|_2 .
  \label{pre est-5-1}
 \end{equation}
 Meanwhile, another summation by parts formula reveals that
 \begin{equation}
  \| \Delta_N^2 f \|_2^2 = - \langle \nabla_N \Delta_N f ,
  \nabla_N \Delta_N^2 f \rangle
  \le \| \nabla_N \Delta_N f \|_2 \cdot \| \nabla_N \Delta_N^2 f \|_2 .
  \label{pre est-5-2}
 \end{equation}
 Therefore, a combination of~\eqref{pre est-5-1} and \eqref{pre est-5-2} leads to
 \begin{align}
   \| \nabla_N \Delta_N f \|  \le  \| \Delta_N f \|_2^\frac12 \cdot \| \Delta_N^2 f \|_2^\frac12
     & \le  \| \Delta_N f \|_2^\frac12 \cdot ( \| \nabla_N \Delta_N f \|_2^\frac12
     \cdot \| \nabla_N \Delta_N^2 f \|_2^\frac12  )^\frac12
	\nonumber
	\\
&= \| \Delta_N f \|_2^\frac12 \cdot \| \nabla_N \Delta_N f \|_2^\frac14
 \cdot \| \nabla_N \Delta_N^2 f \|_2^\frac14 ,
	\label{pre est-5-3}
	\end{align}
which in turn results in
	\begin{equation}
\| \nabla_N \Delta_N f \|^\frac34 \le  \| \Delta_N f \|_2^\frac12
  \cdot \| \nabla_N \Delta_N^2 f \|_2^\frac14 ,
   \quad \mbox{i.e.,}  \quad  \| \nabla_N \Delta_N f \|
   \le  \| \Delta_N f \|_2^\frac23 \cdot \| \nabla_N \Delta_N^2 f \|_2^\frac13 .
	 \label{pre est-5-4}
	\end{equation}
This finishes the proof of the first inequality in~\eqref{pre est-0-5}.

  For the second inequality, we see that $\nabla_N \Delta_N^2 f$ and $\nabla_N L_N$ have the following discrete Fourier expansions
\begin{eqnarray}
     &&
     \nabla_N \Delta_N^2 f_{i,j,k} := \sum_{\ell,m,n =-K}^K  \Big( 2 \ell \pi {\rm i}, 2 m \pi {\rm i} , 2 n \pi {\rm i} \Big)^T  \lambda_{\ell,m,n}^2
   \hat{f}_{\ell,m,n}^N \exp \left( 2 \pi {\rm i} ( \ell x_i + m y_j + n z_k) \right) ,
	\label{pre est-6-1}
\\
	&&
  \nabla_N L_N f_{i,j,k} := \sum_{\ell,m,n =-K}^K  \Big( 2 \ell \pi {\rm i}, 2 m \pi {\rm i} , 2 n \pi {\rm i} \Big)^T \Big( a + \lambda_{\ell,m,n}^2 \Big)
   \hat{f}_{\ell,m,n}^N \exp \left( 2 \pi {\rm i} ( \ell x_i + m y_j + n z_k) \right) ,
	\label{pre est-6-2}
\end{eqnarray}
for $f$ given by~(\ref{spectral-coll-1}). In turn, an application of the Parseval inequality implies that
\begin{eqnarray}
     &&
     \| \nabla_N \Delta_N^2 f \|_2^2 = \sum_{\ell,m,n =-K}^K   | \lambda_{\ell,m,n} |^5
   | \hat{f}_{\ell,m,n}^N |^2 ,
	\label{pre est-6-3}
\\
	&&
  \| \nabla_N L_N f \|_2^2 = \sum_{\ell,m,n =-K}^K  \lambda_{\ell,m,n}
  \Big( a + \lambda_{\ell,m,n}^2 \Big)^2    | \hat{f}_{\ell,m,n}^N |^2 .
	\label{pre est-6-4}
\end{eqnarray}
As a result, the second inequality in~\eqref{pre est-0-5} comes from the fact that $| \lambda_{\ell,m,n}^2 | \le | a + \lambda_{\ell,m,n}^2 |$.

The last inequality~\eqref{pre est-0-6} is a direct consequence of the following estimates
\begin{eqnarray}
   \|  \frac{\Phi^{n+1} - \Phi^n}{\dt}  \|_\infty \le C^* ,  \, \, \,
   \| \frac{\Phi^n - \Phi^{n-1}}{\dt}  \|_\infty \le C^* ,  \quad
   \mbox{by~\eqref{regularity-exact-1} } ,
\end{eqnarray}
combined with the fact that $\| \cdot \|_\infty$ is a norm stronger than $\| \cdot \|_2$ and $\| \cdot \|_{-1,N}$.
\end{proof}

\subsection{Proof of the convergence theorem}

Now we proceed into the proof of Theorem~\ref{thm:convergence}.

\begin{proof} 	
Taking a discrete inner product of (\ref{consistency-2}a) with $(-\Delta_N)^{-1} ( \frac{\frac32 e^{n+1} - 2 e^n + \frac12 e^{n-1}}{\dt} ) $, with a repeated application of summation by parts, we get
	\begin{align}
  & \frac{1}{\dt} \langle \frac32 e^{n+1} - 2 e^n + \frac12 e^{n-1} , L_N e^{n+1} \rangle
  + \| \frac{\frac32 e^{n+1} - 2 e^n + \frac12 e^{n-1}}{\dt}  \|_{-1,N}^2
	\nonumber
\\
  =&
 - \langle  \nabla_N ( {\cal NLE}_1 + {\cal NLE}_2 + {\cal NLE}_3 - (\Delta_N)^{-1} \tau_\phi^{n+1} ) ,
   \nabla_N (-\Delta_N)^{-1} ( \frac{ \frac32 e^{n+1} - 2 e^n
   + \frac12 e^{n-1} }{\dt} )  \rangle  ,  \label{convergence-1}
\\
  &
  {\cal NLE}_1 = \frac{ \tilde{r}^{n+1}}{\sqrt{ E_{1,N} (\hat{\phi}^{n+1} ) }}
  N_N (\hat{\phi}^{n+1})   ,  \nonumber
\\
  &
{\cal NLE}_2 =  - B^{n+1} R^{n+1} \tilde{E}_1^{n+1}  N_N (\hat{\phi}^{n+1}) , \, \, \,
{\cal NLE}_3 =  \frac{R^{n+1}}{\sqrt{ E_{1,N} (\hat{\Phi}^{n+1} ) }}  \tilde{N}^{n+1} .
     \nonumber
	\end{align}
The temporal stencil term could be analyzed in the same manner as~\eqref{eng stab-4-1}:
\begin{eqnarray}
  &&
   \langle \frac32 e^{n+1} - 2 e^n + \frac12 e^{n-1} ,  L_N e^{n+1} \rangle
   \nonumber
\\
  &=&
     \frac14 ( \| L_N^\frac12 e^{n+1} \|_2^2 - \| L_N^\frac12 e^n \|_2^2
   + \| L_N^\frac12 ( 2 e^{n+1} - e^n ) \|_2^2
   - \| L_N^\frac12 ( 2 e^n - e^{n-1} ) \|_2^2  \nonumber
\\
  &&
    + \| L_N^\frac12 ( e^{n+1} - 2 e^n + e^{n-1} ) \|_2^2  ) .  \label{convergence-2}
\end{eqnarray}
A bound for the truncation error inner product term is standard:
\begin{eqnarray}
  &&
    \langle  \nabla_N  (\Delta_N)^{-1} \tau_\phi^{n+1}  ,
   \nabla_N (-\Delta_N)^{-1} ( \frac{ \frac32 e^{n+1} - 2 e^n
   + \frac12 e^{n-1} }{\dt} )  \rangle    \nonumber
\\
   &\le&
    \| \tau_\phi^{n+1}  \|_{-1,N}
    \cdot  \|  \frac{ \frac32 e^{n+1} - 2 e^n + \frac12 e^{n-1} }{\dt} \|_{-1,N}  \nonumber
\\
    &\le&
    2 \| \tau_\phi^{n+1}  \|_{-1,N}^2
    + \frac18  \|  \frac{ \frac32 e^{n+1} - 2 e^n + \frac12 e^{n-1} }{\dt} \|_{-1,N}^2 .
     \label{convergence-3}
\end{eqnarray}
The first nonlinear inner product term could be rewritten as follows:
\begin{eqnarray}
  &&
   - \langle  \nabla_N {\cal NLE}_1 ,
   \nabla_N (-\Delta_N)^{-1} ( \frac{ \frac32 e^{n+1} - 2 e^n
   + \frac12 e^{n-1} }{\dt} )  \rangle   \nonumber
\\
  &=&
   - \langle \frac{ \tilde{r}^{n+1}}{\sqrt{ E_{1,N} (\hat{\phi}^{n+1} ) }}
   N_N (\hat{\phi}^{n+1}) ,
      \frac{ \frac32 e^{n+1} - 2 e^n + \frac12 e^{n-1} }{\dt}  \rangle  .
     \label{convergence-4}
\end{eqnarray}
For the second and third nonlinear inner product terms, we begin with the following estimates:
\begin{eqnarray}
   \| \nabla_N {\cal NLE}_2 \|_2
   &=& \| B^{n+1} R^{n+1} \tilde{E}_1^{n+1}  \nabla_N N_N (\hat{\phi}^{n+1})  \|_2
   \le | B^{n+1} | \cdot | R^{n+1} | \cdot | \tilde{E}_1^{n+1} |
   \cdot  \| \nabla_N N_N (\hat{\phi}^{n+1})  \|_2   \nonumber
\\
  &\le&
   \frac12 | \Omega |^{-\frac32} \cdot ( \tilde{C}_0 + 1)^\frac12
   \cdot \tilde{C}_2 \| \nabla_N \hat{e}^{n+1} \|_2 \cdot \tilde{C}_3  \nonumber
\\
  &=&
  \tilde{C}_5  \| \nabla_N \hat{e}^{n+1} \|_2  , \, \, \, \mbox{with} \, \,
  \tilde{C}_5 =  \frac12 \tilde{C}_2  \tilde{C}_3 ( \tilde{C}_0 + 1)^\frac12
    | \Omega |^{-\frac32} ,   \label{convergence-5-1}
\\
     \| \nabla_N {\cal NLE}_3 \|_2
   &=& \| R^{n+1} ( E_{1,N} ( \hat{\Phi}^{n+1} ) )^{-\frac12}
   \nabla_N \tilde{N}^{n+1} \|_2
   \le | R^{n+1} | \cdot | \Omega |^{-\frac12}
   \cdot  \| \nabla_N \tilde{N}^{n+1}  \|_2   \nonumber
\\
  &\le&
   ( \tilde{C}_0 + 1)^\frac12 | \Omega |^{-\frac12} \
   \cdot \tilde{C}_4 \| \nabla_N \Delta_N \hat{e}^{n+1} \|_2 \ \nonumber
\\
  &=&
  \tilde{C}_6  \| \nabla_N \Delta_N \hat{e}^{n+1} \|_2  , \, \, \, \mbox{with} \, \,
  \tilde{C}_6 =  \tilde{C}_4 ( \tilde{C}_0 + 1)^\frac12
    | \Omega |^{-\frac12} ,   \label{convergence-5-2}
\end{eqnarray}
in which the preliminary estimates~\eqref{pre est-0-1}-\eqref{pre est-0-4} in Lemma~\ref{lem: pre est} have been extensively applied in the derivation. We also notice that the inequality $| R^{n+1} | \le
 ( \tilde{C}_0 + 1 )^\frac12$ comes from the fact that $E (\Phi(t)) \le E (\Phi^0) = \tilde{C}_0 + h^m$, the pseudo-spectral approximation order, combined with the inequality$E_{1,N} (\Phi^k) \le E_N (\Phi^k)$. And also, the following estimate for $\| \nabla_N {\cal NLE}_1 \|_2$ is derived below, which will be needed in the later analysis:
\begin{eqnarray}
     \| \nabla_N {\cal NLE}_1 \|_2
   &=& \| \tilde{r}^{n+1} ( E_{1,N} ( \hat{\phi}^{n+1} ) )^{-\frac12}
   \nabla_N N_N ( \hat{\phi}^{n+1} ) \|_2
   \le | \tilde{r}^{n+1} | \cdot | \Omega |^{-\frac12}
   \cdot  \| \nabla_N N_N (\hat{\phi}^{n+1})  \|_2   \nonumber
\\
  &\le&
   | \Omega |^{-\frac12} \
   \cdot \tilde{C}_3 \cdot \tilde{r}^{n+1}
   =
  \tilde{C}_7  \tilde{r}^{n+1}  , \, \, \, \mbox{with} \, \,
  \tilde{C}_7 =  \tilde{C}_3 | \Omega |^{-\frac12} .   \label{convergence-5-3}
\end{eqnarray}
As a consequence of~\eqref{convergence-5-1}, \eqref{convergence-5-2}, the following inequalities are available:
\begin{eqnarray}
  &&
   - \langle  \nabla_N ( {\cal NLE}_2 + {\cal NLE}_3  ) ,
   \nabla_N (-\Delta_N)^{-1} ( \frac{ \frac32 e^{n+1} - 2 e^n
   + \frac12 e^{n-1} }{\dt} )  \rangle  \nonumber
\\
  &\le&
  (  \| \nabla_N {\cal NLE}_2 \|_2 + \| \nabla_N {\cal NLE}_3  \|_2 )
   \cdot \| \frac{ \frac32 e^{n+1} - 2 e^n + \frac12 e^{n-1} }{\dt}  \|_{-1,N}
   \nonumber
\\
  &\le&
   2 (  \| \nabla_N {\cal NLE}_2 \|_2^2 + \| \nabla_N {\cal NLE}_3  \|_2^2  )
   + \frac14 \| \frac{ \frac32 e^{n+1} - 2 e^n + \frac12 e^{n-1} }{\dt}  \|_{-1,N}^2  \nonumber
\\
  &\le&
   2 (  \tilde{C}_5^2 \| \nabla_N \hat{e}^{n+1} \|_2^2
   + \tilde{C}_6^2 \| \nabla_N \Delta_N \hat{e}^{n+1}  \|_2^2  )
   + \frac14 \| \frac{ \frac32 e^{n+1} - 2 e^n + \frac12 e^{n-1} }{\dt}  \|_{-1,N}^2
   \nonumber
\\
  &\le&
  \tilde{C}_8 \| \nabla_N \Delta_N \hat{e}^{n+1}  \|_2^2
   + \frac14 \| \frac{ \frac32 e^{n+1} - 2 e^n + \frac12 e^{n-1} }{\dt}  \|_{-1,N}^2  ,
   \, \, \, \tilde{C}_8 = 2 (  \tilde{C}_5^2 C_3^2 + \tilde{C}_6^2  ) ,
   \label{convergence-5-4}
\end{eqnarray}
in which $C_3$ corresponds to the elliptic regularity, $\| \nabla_N f \|_2  \le C_3 \| \nabla_N \Delta_N f \|_2$, an inequality similar to~\eqref{H3 est-9-9}. Therefore, a substitution of~\eqref{convergence-2}-\eqref{convergence-4} and \eqref{convergence-5-4} into~\eqref{convergence-1} yields
\begin{eqnarray}
   &&
     \frac{1}{4 \dt} ( \| L_N^\frac12 e^{n+1} \|_2^2 - \| L_N^\frac12 e^n \|_2^2
   + \| L_N^\frac12 ( 2 e^{n+1} - e^n ) \|_2^2
   - \| L_N^\frac12 ( 2 e^n - e^{n-1} ) \|_2^2  )  \nonumber
\\
  &&
    + \frac58 \| \frac{ \frac32 e^{n+1} - 2 e^n + \frac12 e^{n-1} }{\dt}  \|_{-1,N}^2
    \le   - \langle \frac{ \tilde{r}^{n+1}}{\sqrt{ E_{1,N} (\hat{\phi}^{n+1} ) }}
   N_N (\hat{\phi}^{n+1}) ,
      \frac{ \frac32 e^{n+1} - 2 e^n + \frac12 e^{n-1} }{\dt}  \rangle   \nonumber
\\
  && \qquad \qquad \qquad \qquad \qquad \qquad \qquad
    + \tilde{C}_8 \| \nabla_N \Delta_N \hat{e}^{n+1}  \|_2^2
    +   2 \| \tau_\phi^{n+1}  \|_{-1,N}^2 .   \label{convergence-6-1}
\end{eqnarray}
On the other hand, the original error evolutionary equation (\ref{consistency-2}a) gives
\begin{equation}
   \nabla_N (-\Delta_N)^{-1} ( \frac{ \frac32 e^{n+1} - 2 e^n + \frac12 e^{n-1} }{\dt} )
  =   - \nabla_N (  L_N e^{n+1} + {\cal NLE}_1 + {\cal NLE}_2 + {\cal NLE}_3  - (\Delta_N)^{-1} \tau_\phi^{n+1} )  .  \label{convergence-6-2}
\end{equation}
In turn, an application of quadratic inequality implies that
\begin{eqnarray}
  &&
   \| \frac{ \frac32 e^{n+1} - 2 e^n + \frac12 e^{n-1} }{\dt} \|_{-1,N}^2  \nonumber
\\
  &\ge&
     \frac12   \| \nabla_N L_N e^{n+1} \|_2^2
  - 2 \| \nabla_N ( {\cal NLE}_1 + {\cal NLE}_2 + {\cal NLE}_3 - (\Delta_N)^{-1} \tau_\phi^{n+1} )  \|_2^2  \nonumber
\\
  &\ge&
     \frac12   \| \nabla_N L_N e^{n+1} \|_2^2
  - 4 ( \| \nabla_N ( {\cal NLE}_1 + {\cal NLE}_2 + {\cal NLE}_3 ) \|_2^2
   + \| \tau_\phi^{n+1}  \|_{-1,N}^2  )   \nonumber
\\
  &\ge&
     \frac12   \| \nabla_N L_N e^{n+1} \|_2^2
  - 12 ( \| \nabla_N {\cal NLE}_1 \|_2^2 + \| \nabla_N {\cal NLE}_2 \|_2^2
  + \| \nabla_N {\cal NLE}_3 \|_2^2 )
   - 4 \| \tau_\phi^{n+1}  \|_{-1,N}^2   \nonumber
\\
  &\ge&
     \frac12   \| \nabla_N L_N e^{n+1} \|_2^2
  - 12 ( \tilde{C}_7^2  ( \tilde{r}^{n+1} )^2
  + (  \tilde{C}_5^2 C_3^2 + \tilde{C}_6^2  )
  \| \nabla_N \Delta_N \hat{e}^{n+1}  \|_2^2 )
   - 4 \| \tau_\phi^{n+1}  \|_{-1,N}^2   ,
   \label{convergence-6-3}
\end{eqnarray}
with the estimates~\eqref{convergence-5-1}-\eqref{convergence-5-3} recalled. Going back~\eqref{convergence-6-1}, we arrive at
\begin{eqnarray}
   &&
     \frac{1}{4 \dt} ( \| L_N^\frac12 e^{n+1} \|_2^2 - \| L_N^\frac12 e^n \|_2^2
   + \| L_N^\frac12 ( 2 e^{n+1} - e^n ) \|_2^2
   - \| L_N^\frac12 ( 2 e^n - e^{n-1} ) \|_2^2  )
   \nonumber
\\
  &&
    + \frac{5}{16}   \| \nabla_N L_N e^{n+1} \|_2^2   \nonumber
\\
  &\le&
      - \langle \frac{ \tilde{r}^{n+1}}{\sqrt{ E_{1,N} (\hat{\phi}^{n+1} ) }}
   N_N (\hat{\phi}^{n+1}) ,
      \frac{ \frac32 e^{n+1} - 2 e^n + \frac12 e^{n-1} }{\dt}  \rangle   \nonumber
\\
  &&
    +  12 \tilde{C}_7^2  ( \tilde{r}^{n+1} )^2
     + 7 \tilde{C}_8 \| \nabla_N \Delta_N \hat{e}^{n+1}  \|_2^2
    +   6 \| \tau_\phi^{n+1}  \|_{-1,N}^2 .   \label{convergence-6-4}
\end{eqnarray}

Taking a discrete inner product of (\ref{consistency-2}b) with $2 \tilde{r}^{n+1}$ gives
\begin{eqnarray}
  \frac{1}{\dt} ( \frac32 \tilde{r}^{n+1} - 2 \tilde{r}^n + \frac12 \tilde{r}^{n-1} ) \cdot
  2 \tilde{r}^{n+1} = \frac{\tilde{r}^{n+1} }{ \sqrt{ E_{1,N} (\hat{\phi}^{n+1})}} \langle N_N ( \hat{\phi}^{n+1}) , \frac{\frac32 e^{n+1} - 2 e^n + \frac12 e^{n-1}}{\dt} \rangle
   \nonumber
\\
  + \frac{\tilde{r}^{n+1} }{ \sqrt{ E_{1,N} (\hat{\phi}^{n+1})}} \langle \tilde{N}^{n+1} , \frac{\frac32 \Phi^{n+1} - 2 \Phi^n + \frac12 \Phi^{n-1}}{\dt} \rangle  \nonumber
\\
  - B^{n+1} \tilde{E}_1^{n+1} \tilde{r}^{n+1}
  \langle N_N ( \hat{\Phi}^{n+1})  , \frac{\frac32 \Phi^{n+1} - 2 \Phi^n
  + \frac12 \Phi^{n-1}}{\dt} \rangle
+ 2 \tau_r^{n+1} \cdot \tilde{r}^{n+1} .   \label{convergence-7-1}
\end{eqnarray}
The estimate for the temporal stencil term is similar to that of~\eqref{eng stab-4-2}:
\begin{eqnarray}
   &&
   2 \tilde{r}^{n+1} ( \frac32 \tilde{r}^{n+1} - 2 \tilde{r}^n + \frac12 \tilde{r}^{n-1} )
    \nonumber
\\
  &=&
     \frac12 ( | \tilde{r}^{n+1} |^2 - | \tilde{r}^n |^2 + | 2 \tilde{r}^{n+1} - \tilde{r}^n |^2
   - | 2 \tilde{r}^n - \tilde{r}^{n-1} |^2
   + | \tilde{r}^{n+1} - 2 \tilde{r}^n + \tilde{r}^{n-1}  |^2  )  .   \label{convergence-7-2}
\end{eqnarray}
The inner product associated with the truncation error could be controlled via Cauchy inequality:
\begin{eqnarray}
   2 \tau_r^{n+1}  \cdot \tilde{r}^{n+1}
   \le  | \tau_r^{n+1} |^2 + | \tilde{r}^{n+1} |^2  .
     \label{convergence-7-3}
\end{eqnarray}
 The first nonlinear inner product on the right hand side is kept. The second and third nonlinear inner product terms could be analyzed as follows
 \begin{eqnarray}
   \hspace{-0.35in} &&
   \frac{\tilde{r}^{n+1} }{ \sqrt{ E_{1,N} (\hat{\phi}^{n+1})}} \langle \tilde{N}^{n+1} , \frac{\frac32 \Phi^{n+1} - 2 \Phi^n + \frac12 \Phi^{n-1}}{\dt} \rangle  \nonumber
\\
  \hspace{-0.35in} &\le&
  | \tilde{r}^{n+1} | \cdot | \Omega |^{-\frac12}
  \cdot \| \nabla_N \tilde{N}^{n+1} \|_2
  \cdot \| \frac{\frac32 \Phi^{n+1} - 2 \Phi^n + \frac12 \Phi^{n-1}}{\dt} \|_{-1,N}
  \nonumber
\\
  \hspace{-0.35in} &\le&
  | \tilde{r}^{n+1} | \cdot | \Omega |^{-\frac12}
  \cdot \tilde{C}_4 \| \nabla_N \Delta_N \hat{e}^{n+1} \|_2  \cdot C C^*
  \nonumber
\\
  \hspace{-0.35in} &\le&
 \tilde{C}_9  | \tilde{r}^{n+1} | \cdot  \| \nabla_N \Delta_N \hat{e}^{n+1} \|_2
 \le \frac{\tilde{C}_9}{2} (   | \tilde{r}^{n+1} |^2
 +  \| \nabla_N \Delta_N \hat{e}^{n+1} \|_2^2 ) ,  \, \, \,
\tilde{C}_9 =C  \tilde{C}_4 C^* | \Omega |^{-\frac12} ,
   \label{convergence-7-4}
\\
  \hspace{-0.35in} &&
  - B^{n+1} \tilde{E}_1^{n+1} \tilde{r}^{n+1}
  \langle N_N ( \hat{\Phi}^{n+1})  , \frac{\frac32 \Phi^{n+1} - 2 \Phi^n
  + \frac12 \Phi^{n-1}}{\dt} \rangle    \nonumber
\\
  \hspace{-0.35in} &\le&
   | B^{n+1} | \cdot | \tilde{E}_1^{n+1} | \cdot | \tilde{r}^{n+1} |
   \cdot \| \nabla_N N_N ( \hat{\Phi}^{n+1}) \|_2
  \cdot \| \frac{\frac32 \Phi^{n+1} - 2 \Phi^n + \frac12 \Phi^{n-1}}{\dt} \|_{-1,N}
  \nonumber
\\
  \hspace{-0.35in} &\le&
 \frac12 | \Omega |^{-\frac32}  \cdot \tilde{C}_2 \| \nabla_N \hat{e}^{n+1} \|_2
 \cdot  | \tilde{r}^{n+1} | \cdot \tilde{C}_3  \cdot C C^*
  \nonumber
\\
  \hspace{-0.35in} &\le&
 \tilde{C}_{10}  | \tilde{r}^{n+1} | \cdot  \| \nabla_N \Delta_N \hat{e}^{n+1} \|_2
 \le \frac{\tilde{C}_{10}}{2} (   | \tilde{r}^{n+1} |^2
 +  \| \nabla_N \Delta_N \hat{e}^{n+1} \|_2^2 ) ,  \, \, \,
\tilde{C}_{10} =C  \tilde{C}_2 \tilde{C}_3 C_3 C^* | \Omega |^{-\frac32} ,
   \label{convergence-7-5}
\end{eqnarray}
with repeated application of the preliminary estimates~\eqref{pre est-0-1}-\eqref{pre est-0-6} in Lemma~\ref{lem: pre est}. Subsequently, a substitution of~\eqref{convergence-7-2}-\eqref{convergence-7-5} into~\eqref{convergence-7-1} yields
\begin{eqnarray}
  &&
   \frac{1}{2 \dt} ( | \tilde{r}^{n+1} |^2 - | \tilde{r}^n |^2 + | 2 \tilde{r}^{n+1} - \tilde{r}^n |^2
   - | 2 \tilde{r}^n - \tilde{r}^{n-1} |^2
   + | \tilde{r}^{n+1} - 2 \tilde{r}^n + \tilde{r}^{n-1}  |^2  )    \nonumber
\\
  &\le&
   \frac{\tilde{r}^{n+1} }{ \sqrt{ E_{1,N} (\hat{\phi}^{n+1})}} \langle N_N ( \hat{\phi}^{n+1}) , \frac{\frac32 e^{n+1} - 2 e^n + \frac12 e^{n-1}}{\dt} \rangle  \nonumber
\\
  &&
   + \frac{\tilde{C}_9 + \tilde{C}_{10}}{2} (   | \tilde{r}^{n+1} |^2
 +  \| \nabla_N \Delta_N \hat{e}^{n+1} \|_2^2 )
 + | \tilde{r}^{n+1} |^2  + | \tau_r^{n+1} |^2 .
    \label{convergence-7-6}
\end{eqnarray}

Finally, a combination of~\eqref{convergence-6-4} and \eqref{convergence-7-6} results in
\begin{eqnarray}
   &&
     \frac{1}{4 \dt} ( \| L_N^\frac12 e^{n+1} \|_2^2 - \| L_N^\frac12 e^n \|_2^2
   + \| L_N^\frac12 ( 2 e^{n+1} - e^n ) \|_2^2
   - \| L_N^\frac12 ( 2 e^n - e^{n-1} ) \|_2^2  )   \nonumber
\\
  &&
    + \frac{5}{16}   \| \nabla_N L_N e^{n+1} \|_2^2   \nonumber
\\
  &&
  +  \frac{1}{2 \dt} ( | \tilde{r}^{n+1} |^2 - | \tilde{r}^n |^2 + | 2 \tilde{r}^{n+1} - \tilde{r}^n |^2
   - | 2 \tilde{r}^n - \tilde{r}^{n-1} |^2
   + | \tilde{r}^{n+1} - 2 \tilde{r}^n + \tilde{r}^{n-1}  |^2  )    \nonumber
\\
  &\le&
     \tilde{C}_{11}   | \tilde{r}^{n+1} |^2
     + \tilde{C}_{12} \| \nabla_N \Delta_N \hat{e}^{n+1}  \|_2^2
    +   6 \| \tau_\phi^{n+1}  \|_{-1,N}^2
    + | \tau_r^{n+1} |^2  .   \label{convergence-8-1}
\end{eqnarray}
with $\tilde{C}_{11} = 12 ( \tilde{C}_7^2 + \tilde{C}_9^2 + \tilde{C}_{10}^2 ) + 1$,
$\tilde{C}_{12} = 7 \tilde{C}_8 + \frac{\tilde{C}_9 + \tilde{C}_{10}}{2}$. In particular, we notice that the first nonlinear error inner product terms have been cancelled; this subtle fact has played a crucial role in the analysis. In addition, the following inequalities are observed:
\begin{eqnarray}
   \| \nabla_N \Delta_N \hat{e}^{n+1}  \|_2^2
   &=& \| \nabla_N \Delta_N ( 2 e^n - e^{n-1} )  \|_2^2
   \le 6 \| \nabla_N \Delta_N e^n  \|_2^2
   + 3 \| \nabla_N \Delta_N e^{n-1}   \|_2^2  ,  \label{convergence-8-2}
\\
  \| \nabla_N \Delta_N e^k  \|_2^2
   &\le& \| \Delta_N e^k \|_2^\frac43
   \cdot \| \nabla_N \Delta_N^2 e^k \|_2^\frac23    \, \, \,
   \mbox{(by~\eqref{pre est-0-5} ) }     \nonumber
\\
  &\le&
  \frac{4 \sqrt{3}}{3} \tilde{C}_{12}^\frac12 \| \Delta_N e^k \|_2^2
   + \frac{1}{36 \tilde{C}_{12}} \| \nabla_N \Delta_N^2 e^k \|_2^2 ,  \quad
   k = n,  n-1 ,    \label{convergence-8-3}
\end{eqnarray}
in which Young's inequality has been applied in the last step of~\eqref{convergence-8-3}. This in turn leads to
\begin{eqnarray}
   \tilde{C}_{12} \| \nabla_N \Delta_N \hat{e}^{n+1}  \|_2^2
   &\le& \tilde{C}_{12} ( 6 \| \nabla_N \Delta_N e^n  \|_2^2
   + 3 \| \nabla_N \Delta_N e^{n-1}   \|_2^2 )   \nonumber
\\
  &\le&
     4 \sqrt{3} \tilde{C}_{12}^\frac32 ( 2 \| \Delta_N e^n \|_2^2
   + \| \Delta_N e^{n-1} \|_2^2 )   \nonumber
\\
  &&
   +  \frac16 \| \nabla_N \Delta_N^2 e^n \|_2^2
   +  \frac{1}{12} \| \nabla_N \Delta_N^2 e^{n-1} \|_2^2 .  \label{convergence-8-4}
\end{eqnarray}
Going back~\eqref{convergence-8-1}, we arrive at (by denoting $\tilde{C}_{13} = 8 \sqrt{3} \tilde{C}_{12}^\frac32$)
\begin{eqnarray}
   &&
     \frac{1}{\dt} ( {\cal H}^{n+1} - {\cal H}^n )
    + \frac{5}{16}   \| \nabla_N L_N e^{n+1} \|_2^2   \nonumber
\\
  &\le&
     \tilde{C}_{11}   | \tilde{r}^{n+1} |^2
     + \tilde{C}_{13} (  \| \Delta_N e^n \|_2^2  + \| \Delta_N e^{n-1} \|_2^2 )
     +  \frac16 \| \nabla_N \Delta_N^2 e^n \|_2^2
   +  \frac{1}{12} \| \nabla_N \Delta_N^2 e^{n-1} \|_2^2   \nonumber
\\
  &&
    +   6 \| \tau_\phi^{n+1}  \|_{-1,N}^2
    + \| \tau_r^{n+1} \|_2^2  ,  \label{convergence-8-5}
\\
  &&  \mbox{with} \, \, \,
{\cal H}^{n+1} :=  \frac14 ( \| L_N^\frac12 e^{n+1} \|_2^2
   + \| L_N^\frac12 ( 2 e^{n+1} - e^n ) \|_2^2  )
  +  \frac12 ( | \tilde{r}^{n+1} |^2  + | 2 \tilde{r}^{n+1} - \tilde{r}^n |^2 ) .
    \label{convergence-8-6}
	\end{eqnarray}
Moreover, the following inequalities are recalled
\begin{eqnarray}
   \| \Delta_N e^k \|_2^2 \le \| L_N^\frac12 e^k \|_2^2
   \le 4 {\cal H}^k  ,   \quad
   | r^k |^2 \le 2 {\cal H}^k , \quad
   \| \nabla_N \Delta_N^2 e^k \|_2^2
   \le \| \nabla_N L_N e^k \|_2^2  ,  \, \, \, \mbox{(by~\eqref{pre est-0-5}) } ,
    \label{convergence-8-7}
\end{eqnarray}
for $k=n+1, n, n-1$. Then we obtain the following estimate
\begin{eqnarray}
   &&
     \frac{1}{\dt} ( {\cal H}^{n+1} - {\cal H}^n )
    + \frac{5}{16}   \| \nabla_N L_N e^{n+1} \|_2^2   \nonumber
\\
  &\le&
     ( 2 \tilde{C}_{11}  + 4 \tilde{C}_{13} ) (  {\cal H}^{n+1} + {\cal H}^n + {\cal H}^{n-1}  )
     +  \frac16 \| \nabla_N L_N e^n \|_2^2
   +  \frac{1}{12} \| \nabla_N L_N e^{n-1} \|_2^2   \nonumber
\\
  &&
    +   6 \| \tau_\phi^{n+1}  \|_{-1,N}^2
    + | \tau_r^{n+1} |^2  .  \label{convergence-8-8}
	\end{eqnarray}
Therefore, with an application of discrete Gronwall inequality, and making use of the fact that $\| \tau_\phi^{n+1} \|_{-1,N} ,  \| \tau_r^{n+1} \| \le C (\dt^2 + h^m)$, we arrive at
\begin{eqnarray}
   {\cal H}^{n+1}  + \frac{1}{16} \dt \sum_{j=1}^{n+1} \| \nabla_N L_N e^j \|_2^2
   \le \hat{C} ( \dt^4 + h^{2m}) ,   \label{convergence-8-9}
\end{eqnarray}
with $\hat{C}$ independent on $\dt$ and $h$. In turn, the desired convergence estimate is available
\begin{eqnarray}
   \| \Delta_N e^{n+1} \|_2  + \Bigl( \dt \sum_{j=1}^{k+1} \| \nabla_N \Delta_N^2 e^j \|_2^2  \Bigr)^\frac12 \le C \hat{C}^\frac12 ( \dt^2 + h^m) ,   \label{convergence-8-10}
\end{eqnarray}
in which the estimates~\eqref{convergence-8-7} has been recalled. This completes the proof of Theorem~\ref{thm:convergence}.
	\end{proof}

\begin{rem}
In an earlier error analysis work~\cite{LiXL19} for the SAV scheme applied to the Cahn-Hilliard flow, a linear refinement requirement for the time step size, $\dt \le C h$, has to be imposed for the convergence estimate, since an inverse inequality has to be applied in the error estimate in the energy norm. In contrast, we have derived a higher order $H^3$ bound for the numerical solution, which in turn leads to an unconditional convergence estimate (no scaling law constraint between $\dt$ and $h$) for the proposed SAV scheme.
\end{rem}

\begin{rem}
With the help of the optimal rate convergence estimate in the $\ell^\infty (0,T; H_N^2)$ norm, we are able to derive a sharper bound for the original energy functional. In more details, the error estimate~\eqref{SPFC-convergence-0} leads to the following inequalities
\begin{equation}
\begin{aligned}
  &
  \| \nabla_N \phi^m \|_4^4 - \| \nabla_N \Phi^m \|_4^4
  \le 4 ( \max ( \| \nabla_N \phi^m \|_4 ,  \| \nabla_N \Phi^m \|_4 ) )^3
  \| \nabla_N e^m \|_4
\\
  &  \qquad \qquad \qquad \qquad
  \le C \tilde{C}_1^3  \| \Delta_N e^m \|_2 \le  C \tilde{C}_1^3 \hat{C} ( \dt^2 + h^m) ,
\\
  &
  \| \Delta_N \phi^m \|_2^2 - \| \Delta_N \Phi^m \|_2^2
  \le 2  \max ( \| \Delta_N \phi^m \|_2 ,  \| \Delta_N \Phi^m \|_2 )
  \| \Delta_N e^m \|_2
  \le  C \tilde{C}_1 \hat{C} ( \dt^2 + h^m)  ,
\\
  &
  \| \phi^m \|_2^2 - \| \Phi^m \|_2^2 , \, \, \,
  \| \nabla_N \phi^m \|_2^2 - \| \nabla_N \Phi^m \|_2^2
    \le  C \tilde{C}_1 \hat{C} ( \dt^2 + h^m)  ,    \quad
    \mbox{(similar analysis)} ,
%E_N (\phi) := \frac14 \| \nabla_N \phi \|_4^4 + \frac{a}{2} \| \phi \|_2^2 - \| \nabla_N \phi \|_2^2 + \frac12 \nrm{\Delta_N \phi}_2^2
\end{aligned}
\label{SPFC-energy-refined-1}
\end{equation}
in which the discrete Sobolev inequality~\eqref{embedding-0} and the uniform-in-time $H_N^2$ bound~\eqref{SPFC-H2 stab-0} have been extensively applied. Then we get
\begin{equation}
\begin{aligned}
  &
   | E_N (\phi^m) - E_N (\Phi^m) |  \le  C ( \tilde{C}_1 + \tilde{C}_1^3)
   \hat{C} ( \dt^2 + h^m)  ,
\\
  &
  E_N (\Phi^m) - E_N (\Phi (t^m) )  = O (h^m) ,  \quad
  E_N (\Phi (t^m) )  - E (\Phi (t^m) ) = O (h^m) ,  \quad
  %\mbox{(by the Fourier projection estimates)} ,
\\
  &
  E (\Phi (t^m) ) \le  E (\Phi (t^0) ) := C_0 ,   \quad
  \mbox{so that} \, \, \, E_N (\phi^m) \le C_0 + C ( \tilde{C}_1^3 + 1 )
   \hat{C} ( \dt^2 + h^m)  \le C_0 + 1  ,
\end{aligned}
\label{SPFC-energy-refined-2}
\end{equation}
provided that $\dt$ and $h$ are sufficiently small. Of course, it is a much sharper estimate than the uniform-in-time bound~\eqref{SPFC-energy-0}, in which $\tilde{C}_1^*$ depends on $\tilde{C}_0$ in a quadratic way. On the other hand, it is notice that the refined estimate~\eqref{SPFC-energy-refined-2} is local-in-time, since the convergence constant $\hat{C}$ depends on the final time, while the rough bound~\eqref{SPFC-energy-0}  turns out to be a global quantity.
\end{rem}

\begin{rem}
In a recent work~\cite{cheng2019d}, a modified BDF scheme is applied to the SPFC equation~\eqref{equation-SPFC} in the primitive formulation, the energy stability and optimal rate convergence estimates have been provided as well. Due to the primitive formulation involved, the highly complicated 4-Laplacian term has be to treated implicitly to ensure an unconditional energy stability. This leads to a nonlinear system to be solved at each time step, and the corresponding computational cost for the nonlinear system is approximately three times the linear SAV scheme proposed in this work, with the same spatial and temporal resolution. As a result, the computational efficiency has been improved in this SAV approach.

In addition, only the $\ell^\infty (0, T; \ell^2) \cap \ell^2 (0, T; H_N^3)$ error estimate has been performed in the existing work~\cite{cheng2019d}, in comparison with the $\ell^\infty (0, T; H_N^2) \cap \ell^2 (0, T; H_N^5)$ error estimate provided in this article. In turn, the uniform-in-time $H_N^3$ bound of the numerical solution, as established in~\eqref{H3 est-0} (Theorem~\ref{thm: H3 est}), is not needed in~\cite{cheng2019d}. Therefore, this article has provided further technical tools for the theoretical analysis of higher order stability estimate and convergence analysis, in comparison with~\cite{cheng2019d}.
\end{rem}

	\section{Numerical results}
	\label{sec:numerical results}

	\subsection{Convergence test for the numerical scheme}

In this subsection we perform some numerical experiments to verify the accuracy order of the proposed SAV scheme. %In particular, it is observed that the search direction and Poisson-like equations can also be efficiently solved by using the Fourier pseudo-spectral method (see the related discussions in~\cite{Boyd2001, cheng2015fourier, Gottlieb1977, HGG2007}) and Fast Fourier Transform (FFT).
To test the convergence rate, we choose the following exact solution for \eqref{equation-SPFC} on the square domain $\Omega=(0, 1)^2$:
	\begin{equation}
	\label{eqn:init1}
\phi_e(x,y,t) = \frac{1}{2\pi}\sin (2 \pi x) \cos (2 \pi y) \cos(t).
	\end{equation}
We set $a =0.975$, %and we choose the artificial diffusion coefficient as $A = 0.25$.
and the final time is taken as $T=1$.

%To investigate the accuracy in space, we fix {\color{blue}$\dt = 10^{-4}$, and absorb the temporal discretization errors into the external source term, so that the spatial approximation error dominates the overall numerical error.} We compute solutions with grid sizes {\color{blue}$N = 6$ to $N=20$ in increments of 2}, and we solve up to time $T = 0.16$. The $\ell^2$ numerical errors, computed by the proposed numerical schemes~\eqref{scheme-BDF-SPFC-1} and \eqref{scheme-BDF-SPFC-1-alt}, are displayed in Fig.~\ref{fig1}. The spatial spectral accuracy is apparently observed for the phase variable. Due to the {\color{blue}round-off errors}, a saturation of spectral accuracy appears with an increasing $N$, for both schemes.

	\begin{figure}
	\begin{center}
\includegraphics[width=3.0in]{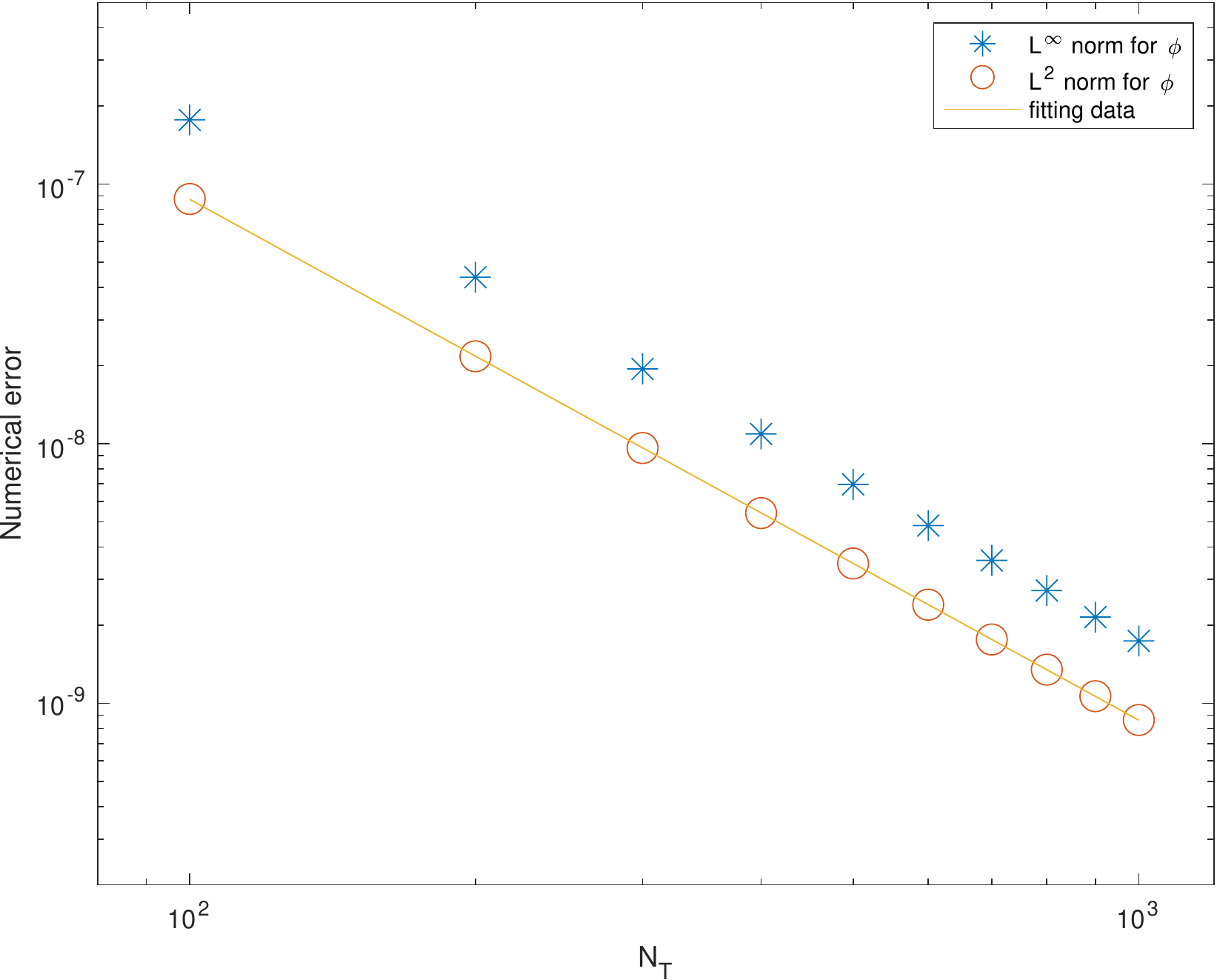}
	\end{center}
\caption{The discrete $\ell^2$ and $\ell^\infty$ numerical errors vs. temporal resolution  $N_T$ for $N_T = 100:100:1000$, with a spatial resolution $N=128$. The data lie roughly on curves $CN_T^{-2}$, for appropriate choices of $C$, confirming the full second-order accuracy of the scheme.}
	\label{fig1}
	\end{figure}

To make $\Phi$ satisfy the original PDE \eqref{equation-SPFC}, we have to add an artificial, time-dependent forcing term. Then the proposed third order BDF-type  scheme~\eqref{scheme-SAV-1} can be implemented to solve for the original PDE. To explore the temporal accuracy, we fix the spatial resolution as $N=128$ so that the numerical error is dominated by the temporal ones. We compute solutions with a sequence of time step sizes, $\dt = \frac{T}{N_T}$, with $N_T=100$ to $N_T=1000$ in increments of 100, and the same final time $T=1$. Fig.~\ref{fig1} shows the discrete $\ell^2$ norms of the errors between the numerical and exact solutions, computed by the proposed numerical scheme~\eqref{scheme-SAV-1}. %A clear second order accuracy has been demonstrated.
The fitted line displayed in Figure~\ref{fig1} shows an approximate slope of -2, which in turn verifies a nice second order temporal convergence order, in both the discrete $\ell^2$ and $\ell^\infty$ norms.

\subsection{Numerical simulation of square symmetry patterns} %and energy dissipation}

The $4$-Laplacian term in \eqref{equation-SPFC} gives preference to rotationally invariant patterns with square symmetry. In this subsection, we perform two-dimensional numerical simulations showing the emergence of these patterns. The rest of the parameters are given by $a = 0.5$ and $\Omega =(0,L)^2$, with $L=100$. The initial data for the simulations are given by
\begin{equation}
	\label{eqn:init2}
 \phi^0_{i,j}=0.05\cdot(2r_{i,j}-1),
	\end{equation}
where the $r_{i,j}$ are uniformly distributed random numbers in $[0, 1]$. %In addition, we add nucleation sites at specific locations in the domain, with magnitude 10, at (50,50) as an example of one nucleation site, at (25,25), (25,75), (75,25), and (75,75) as another example of four nucleation sites.
For the temporal step size $\dt$, we use increasing values of $\dt$ in the time evolution: $\dt = 0.01$ on the time interval $[0,1000]$ and $\dt = 0.02$ on the time interval $[1000, 21000]$. Whenever a new time step size is applied, we initiate the two-step numerical scheme by  taking $\phi^{-1} = \phi^0$, with the initial data $\phi^0$ given by the final time output of the last time period. The time snapshots of the evolution by using the given parameters are presented in Figures~\ref{fig:long-time-spfc} (one nucleation site). These tests confirm the emergence of the rotationally invariant square-symmetry patterns in the density field.

	\begin{figure}[ht]
	\begin{center}
	
	\begin{subfigure}{0.48\textwidth}
\includegraphics[height=0.48\textwidth,width=0.48\textwidth]{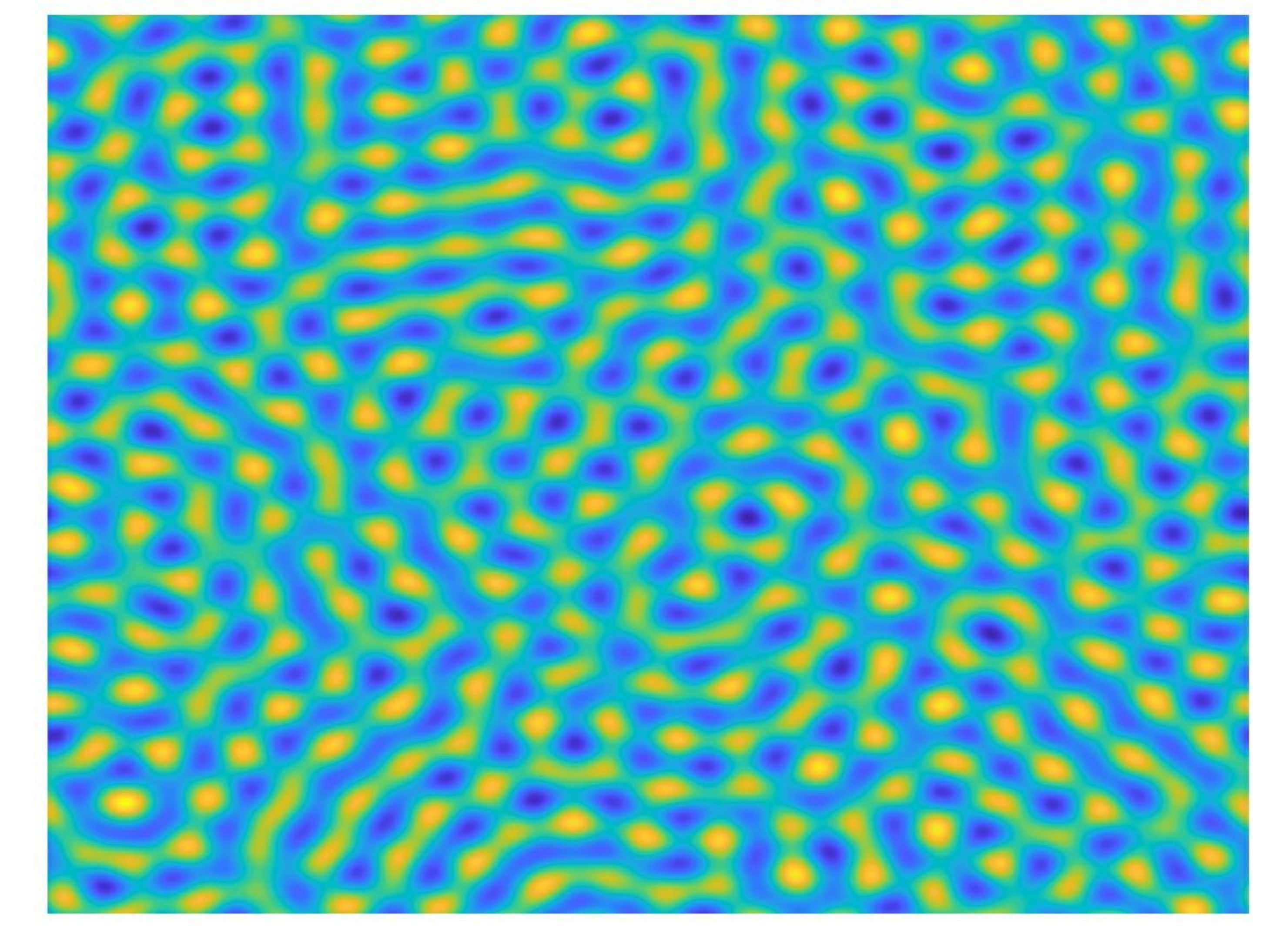}
			\includegraphics[height=0.48\textwidth,width=0.48\textwidth]{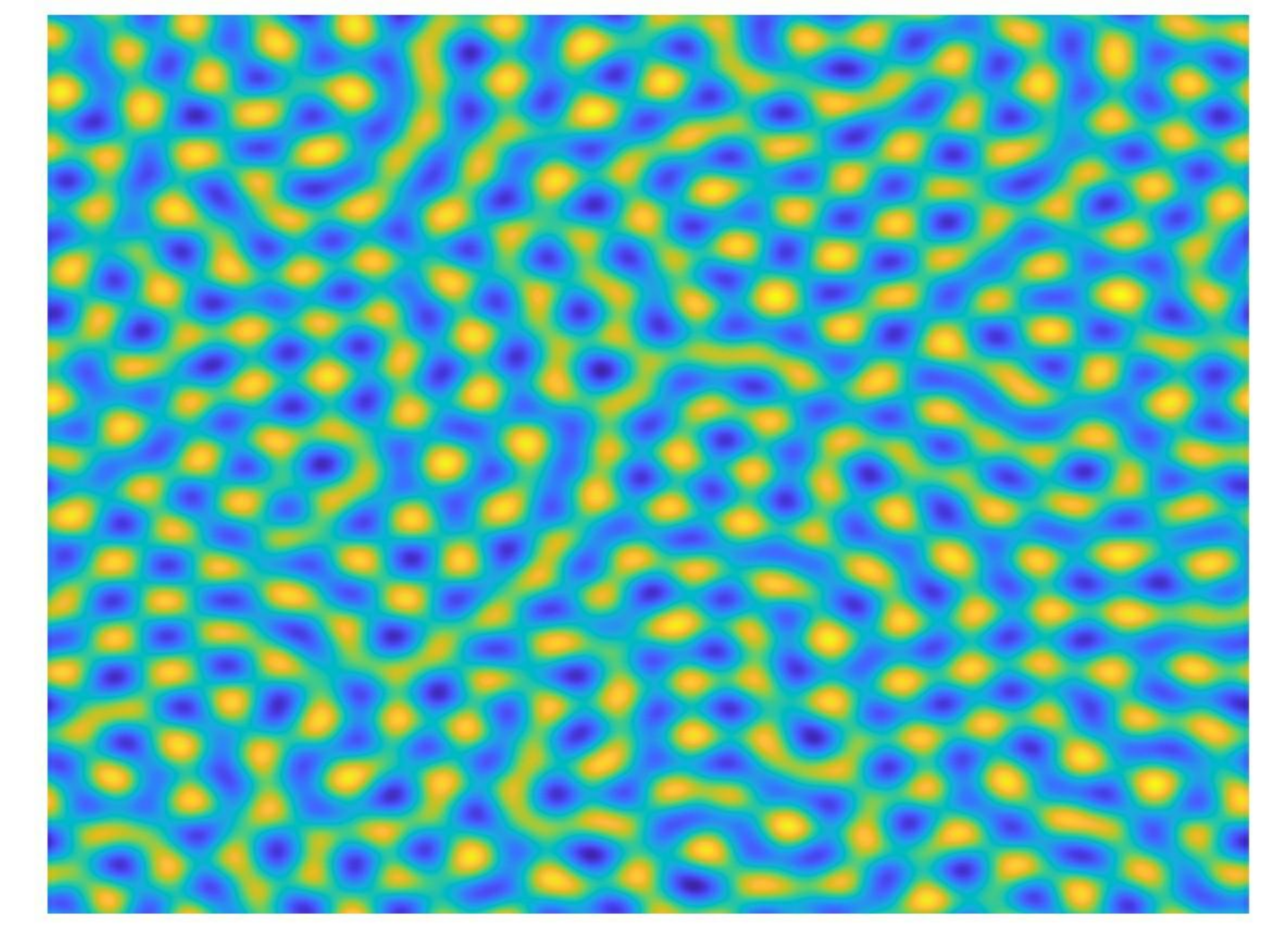}
		
\caption*{$t=10, 20$}

	\end{subfigure}
	\begin{subfigure}{0.48\textwidth}
	
\includegraphics[height=0.48\textwidth,width=0.48\textwidth]{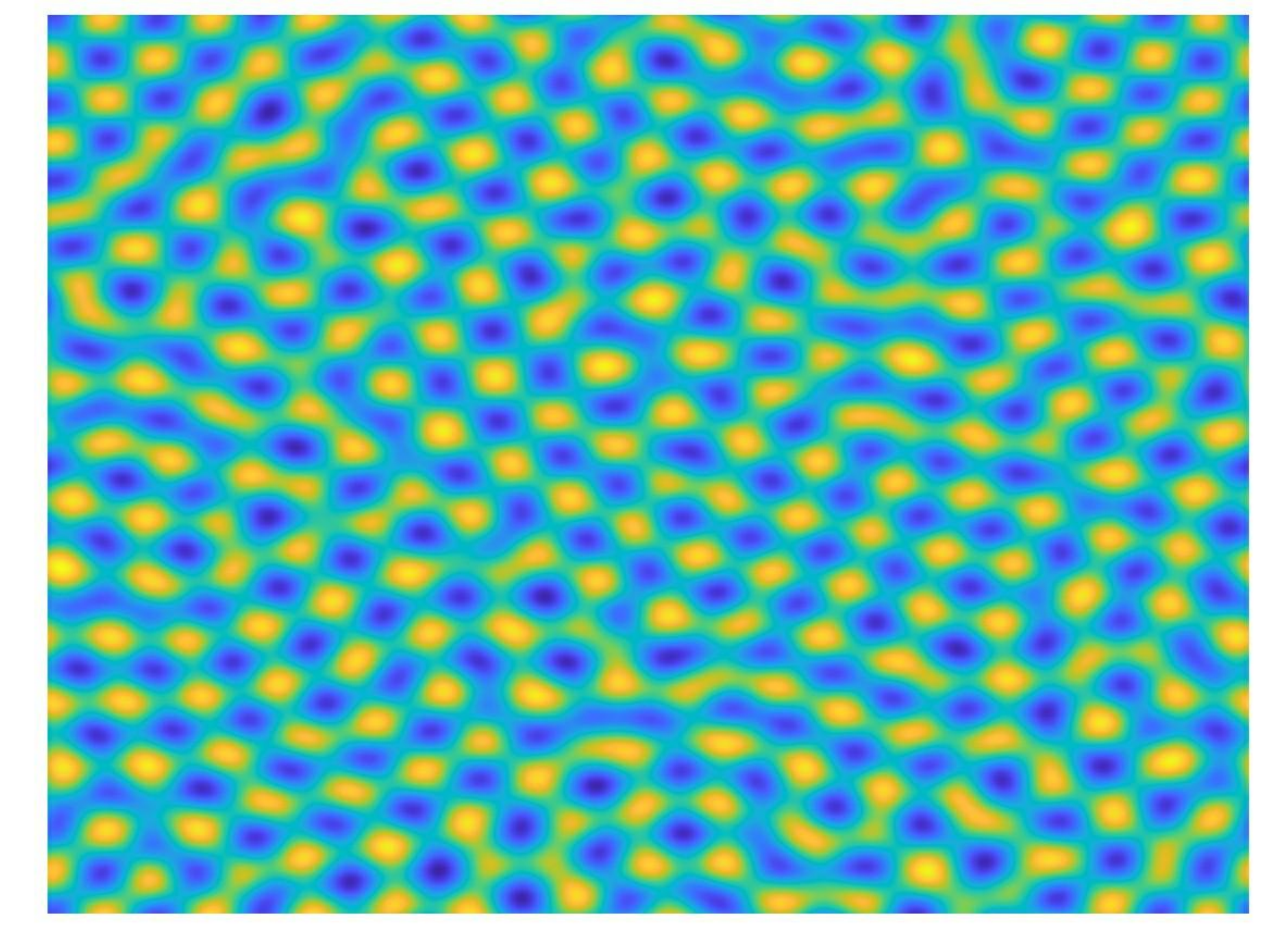}
			\includegraphics[height=0.48\textwidth,width=0.48\textwidth]{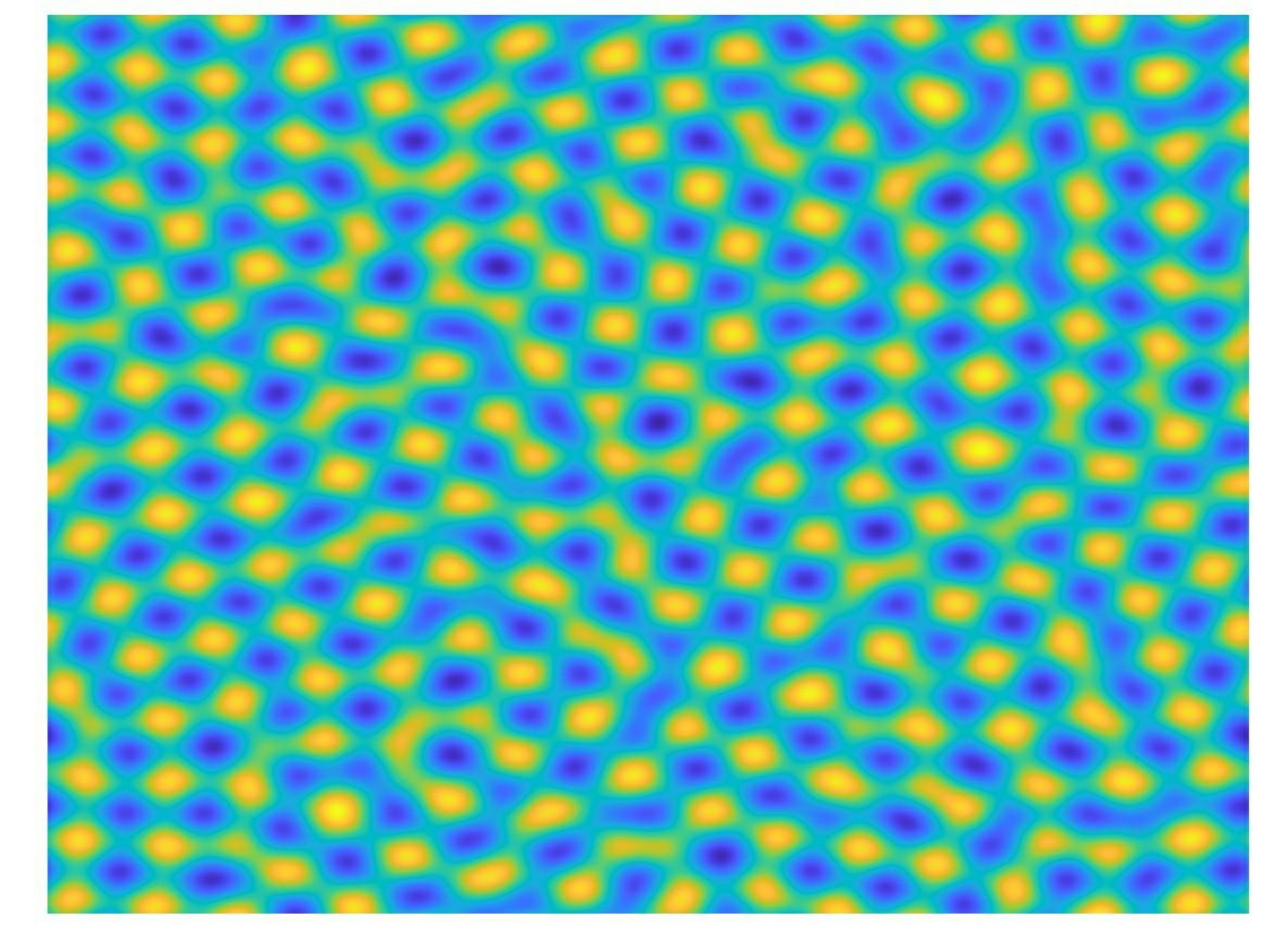}
			
\caption*{$t=40, 80$}

	\end{subfigure}
	\begin{subfigure}{0.48\textwidth}
			\includegraphics[height=0.48\textwidth,width=0.48\textwidth]{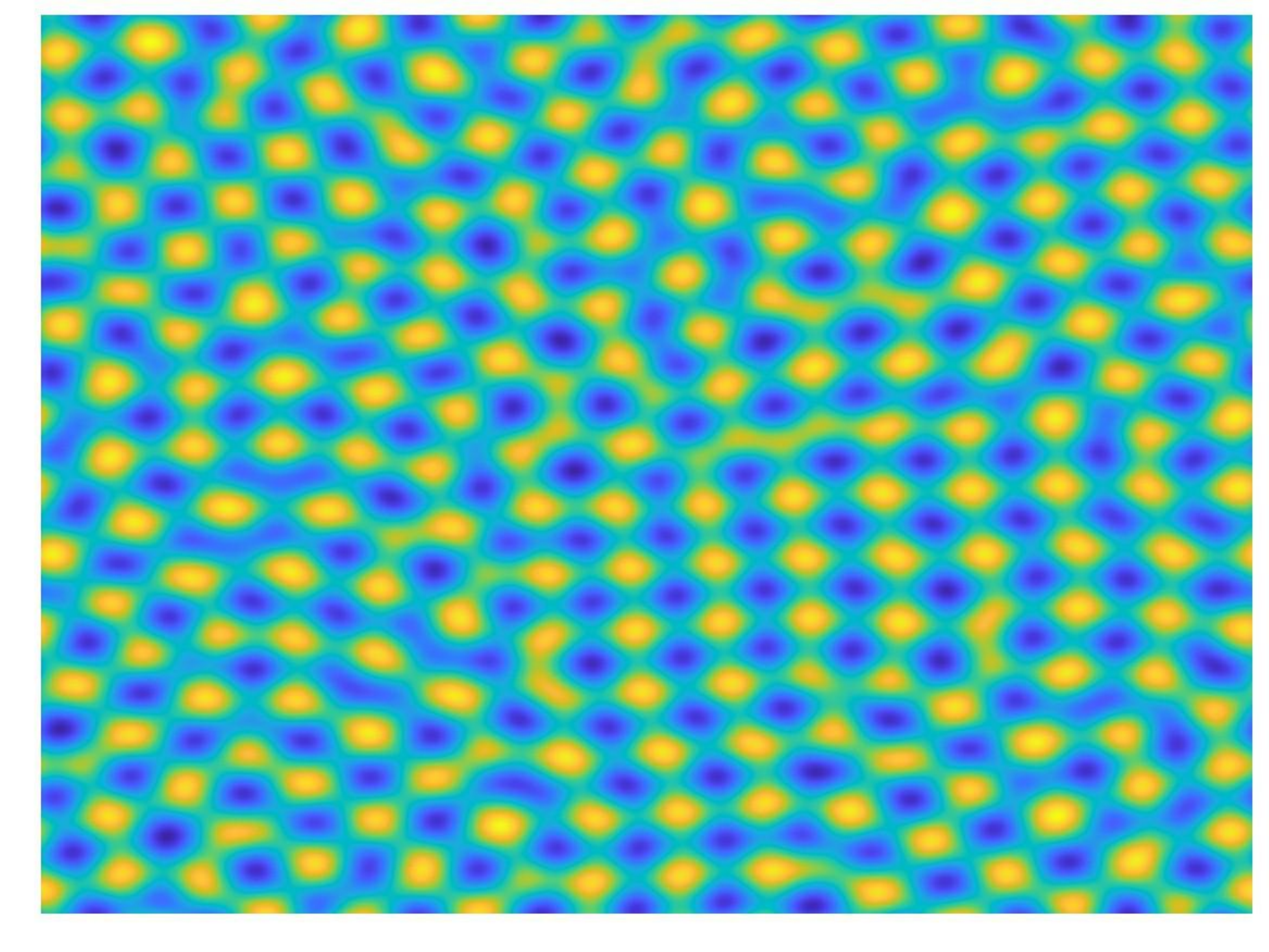}
			\includegraphics[height=0.48\textwidth,width=0.48\textwidth]{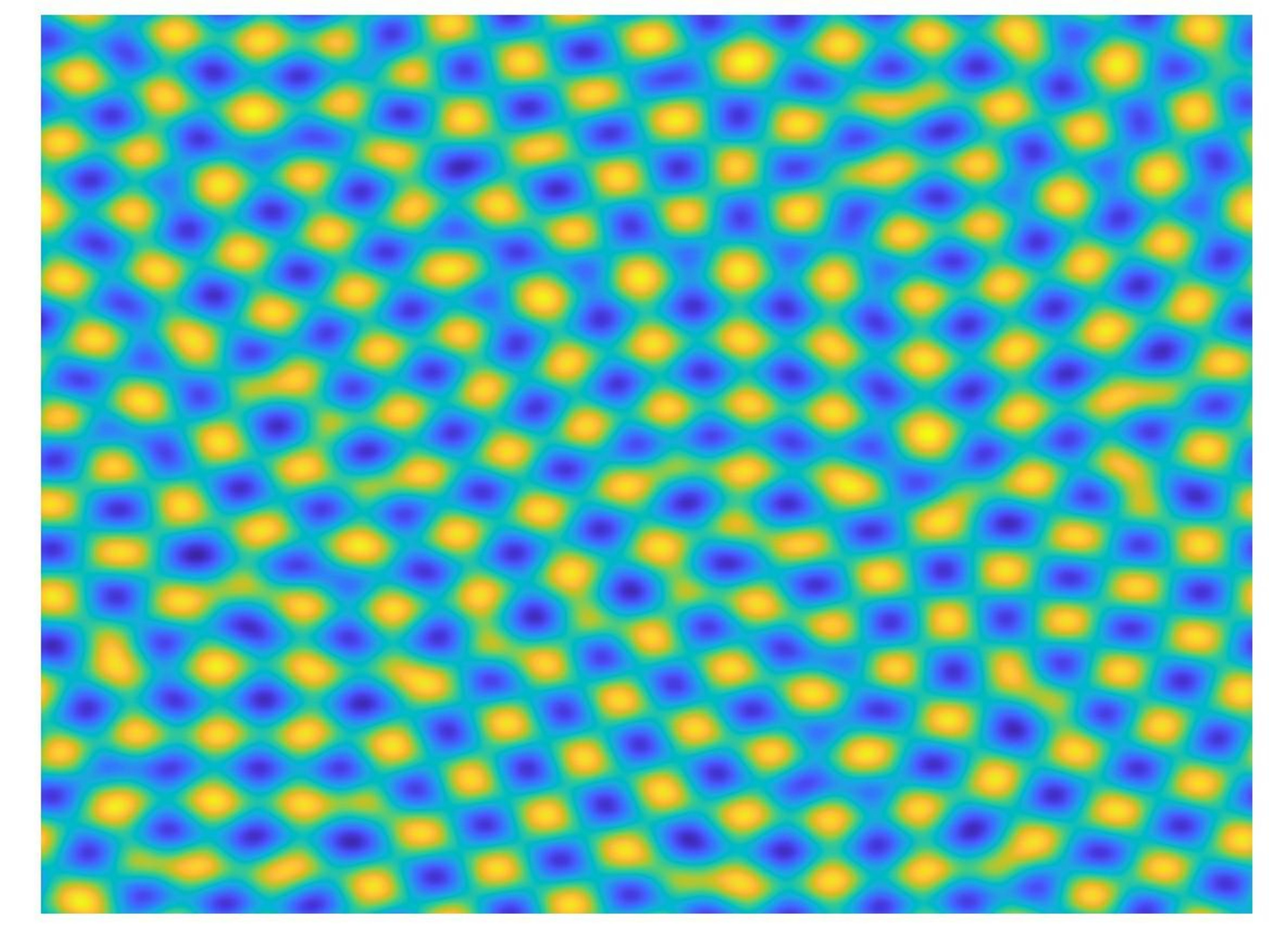}
			
\caption*{$t=100, 200$}

	\end{subfigure}
	\begin{subfigure}{0.48\textwidth}
			\includegraphics[height=0.48\textwidth,width=0.48\textwidth]{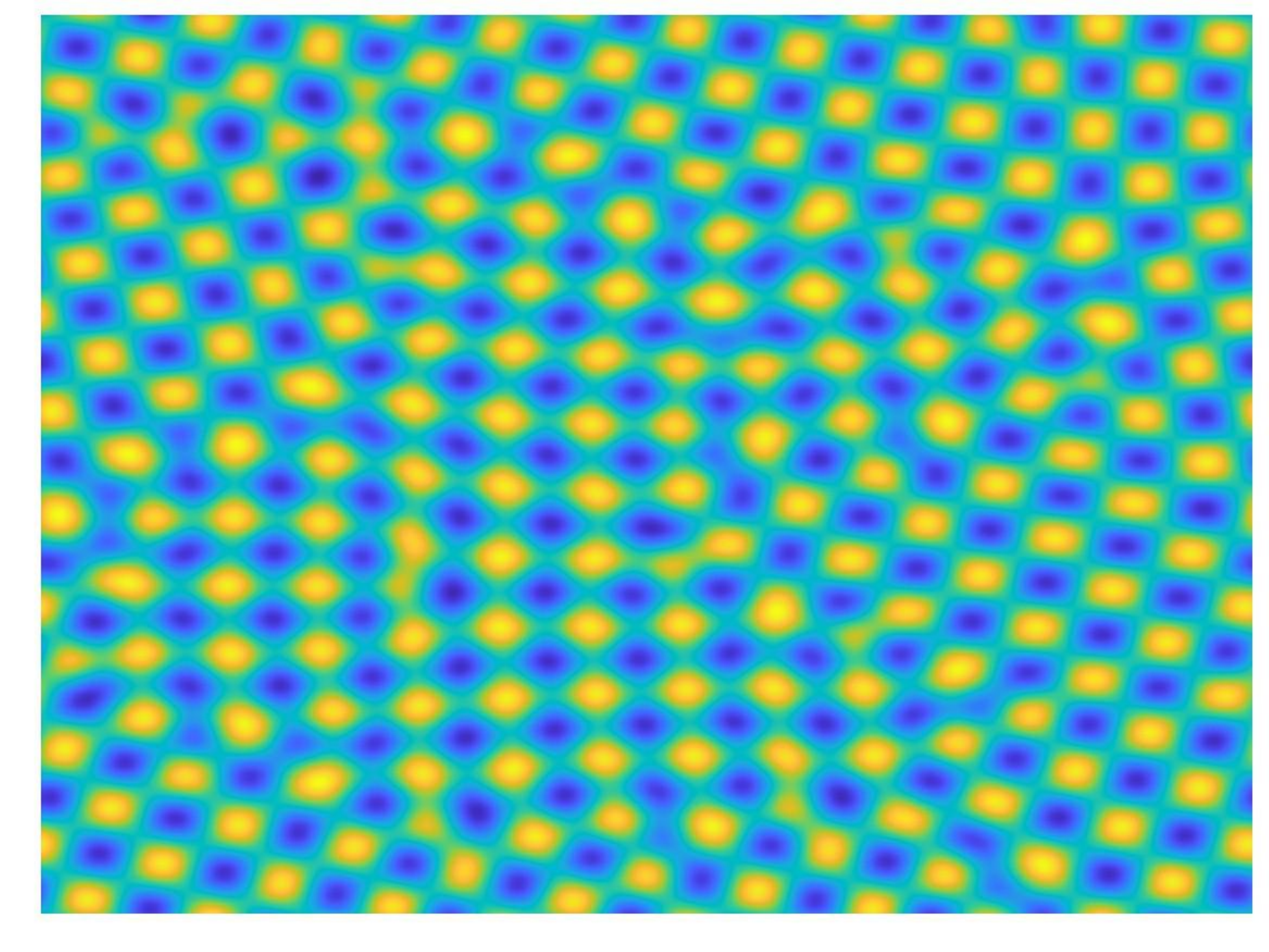}
			\includegraphics[height=0.48\textwidth,width=0.48\textwidth]{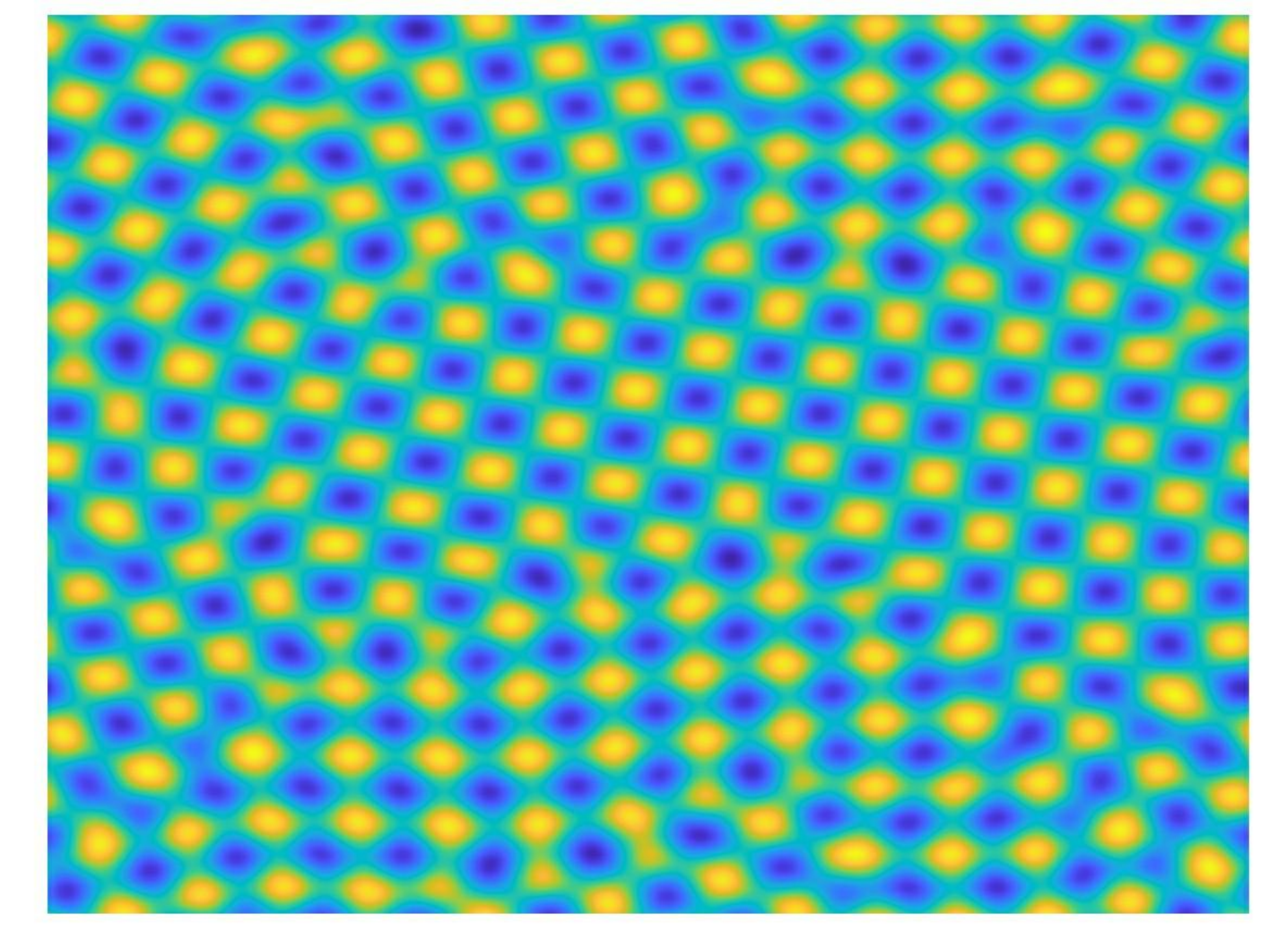}
			
\caption*{$t=500,1000$}
		
	\end{subfigure}
	\begin{subfigure}{0.48\textwidth}
			\includegraphics[height=0.48\textwidth,width=0.48\textwidth]{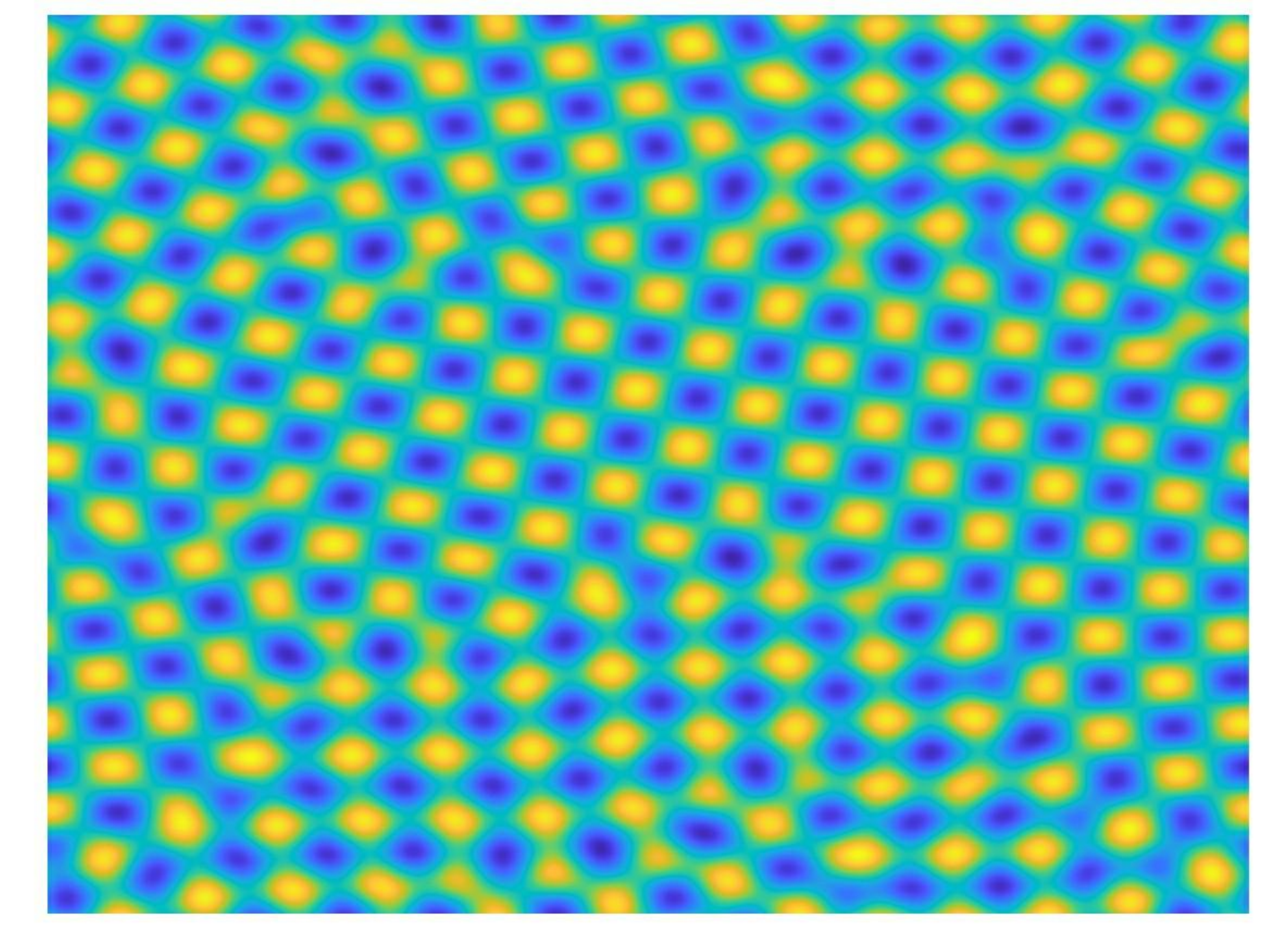}
			\includegraphics[height=0.48\textwidth,width=0.48\textwidth]{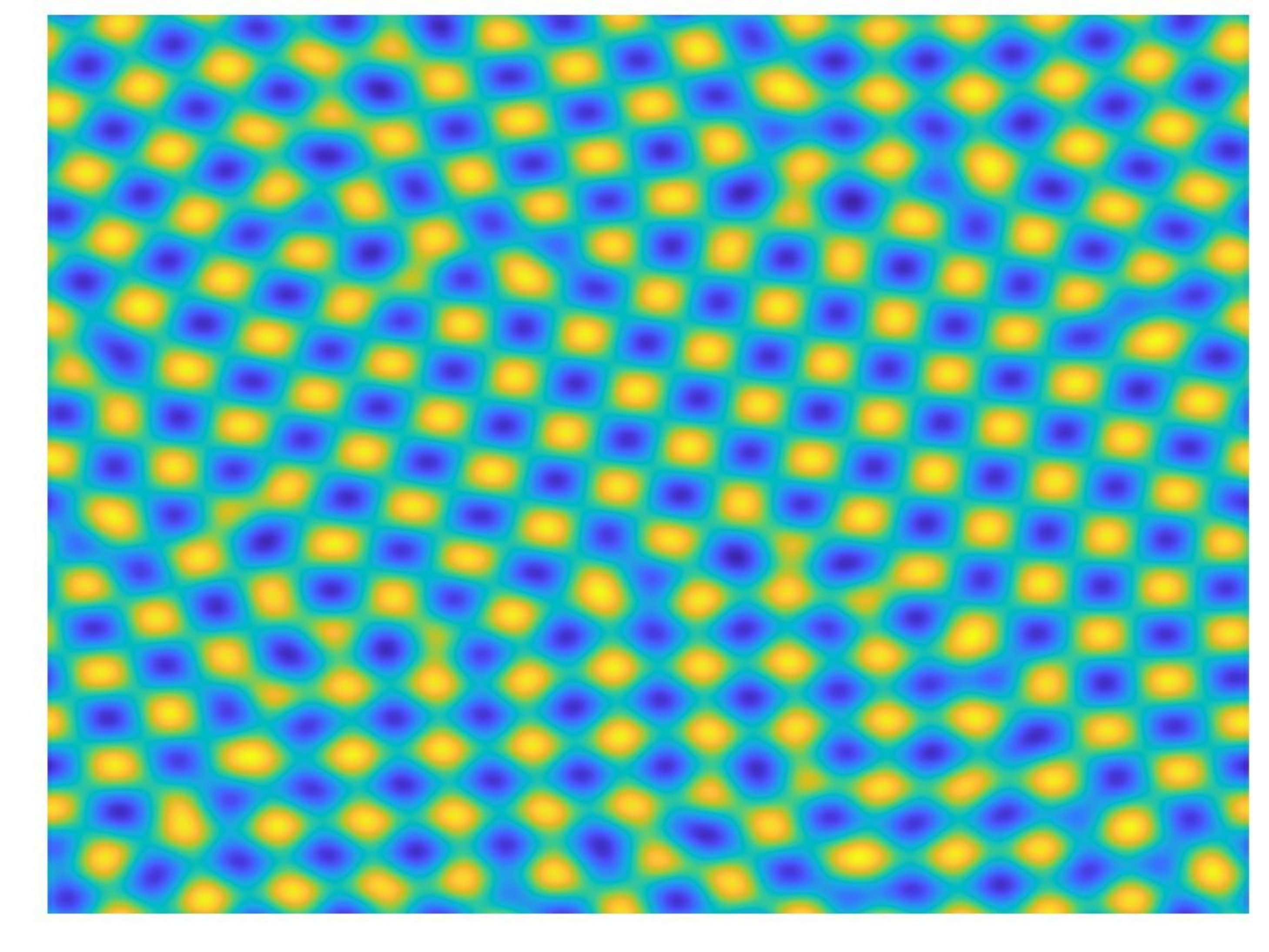}
			
\caption*{$t=3000, 9000$}
		
	\end{subfigure}
	\begin{subfigure}{0.48\textwidth}
			\includegraphics[height=0.48\textwidth,width=0.48\textwidth]{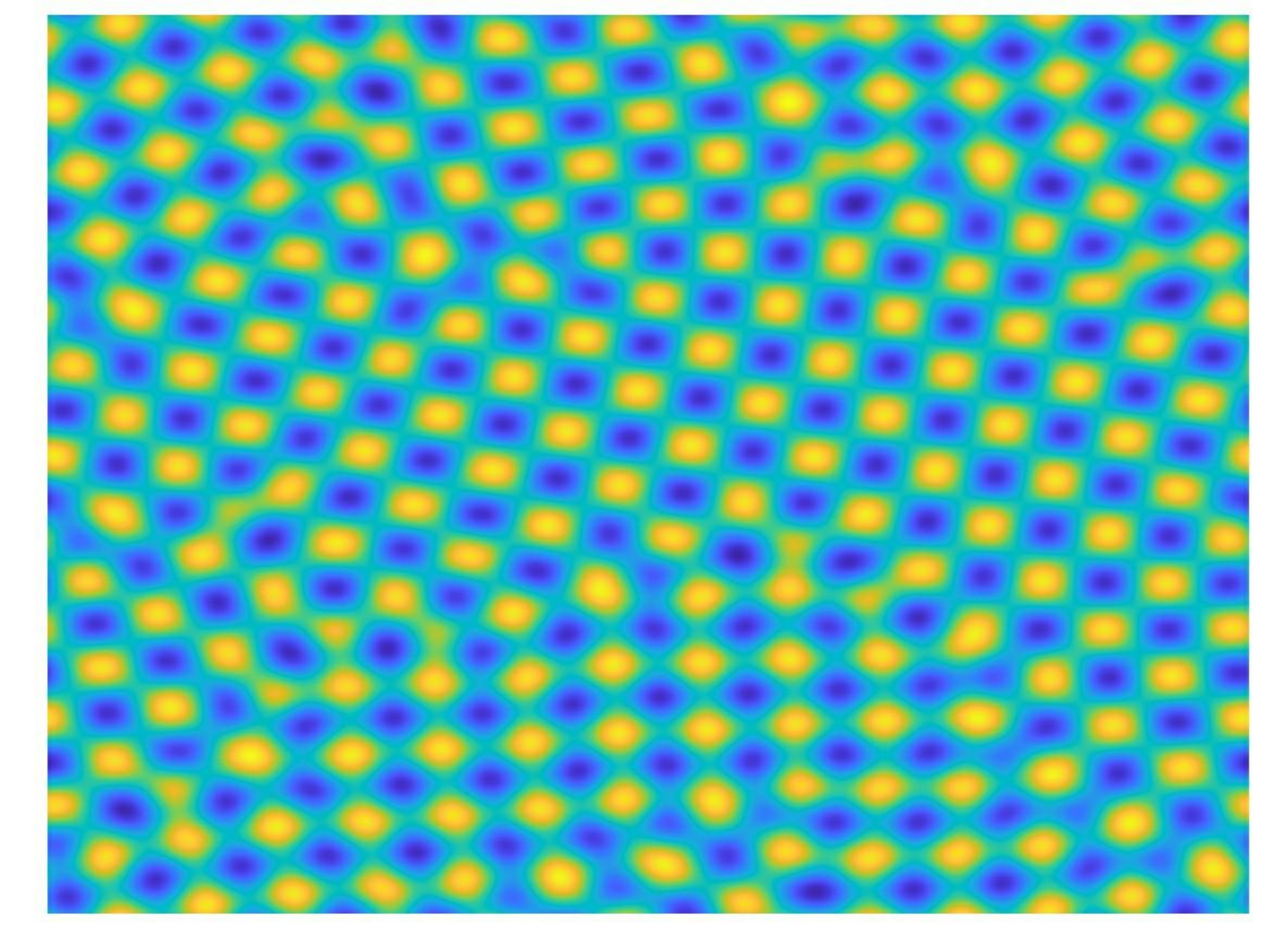}
			\includegraphics[height=0.48\textwidth,width=0.48\textwidth]{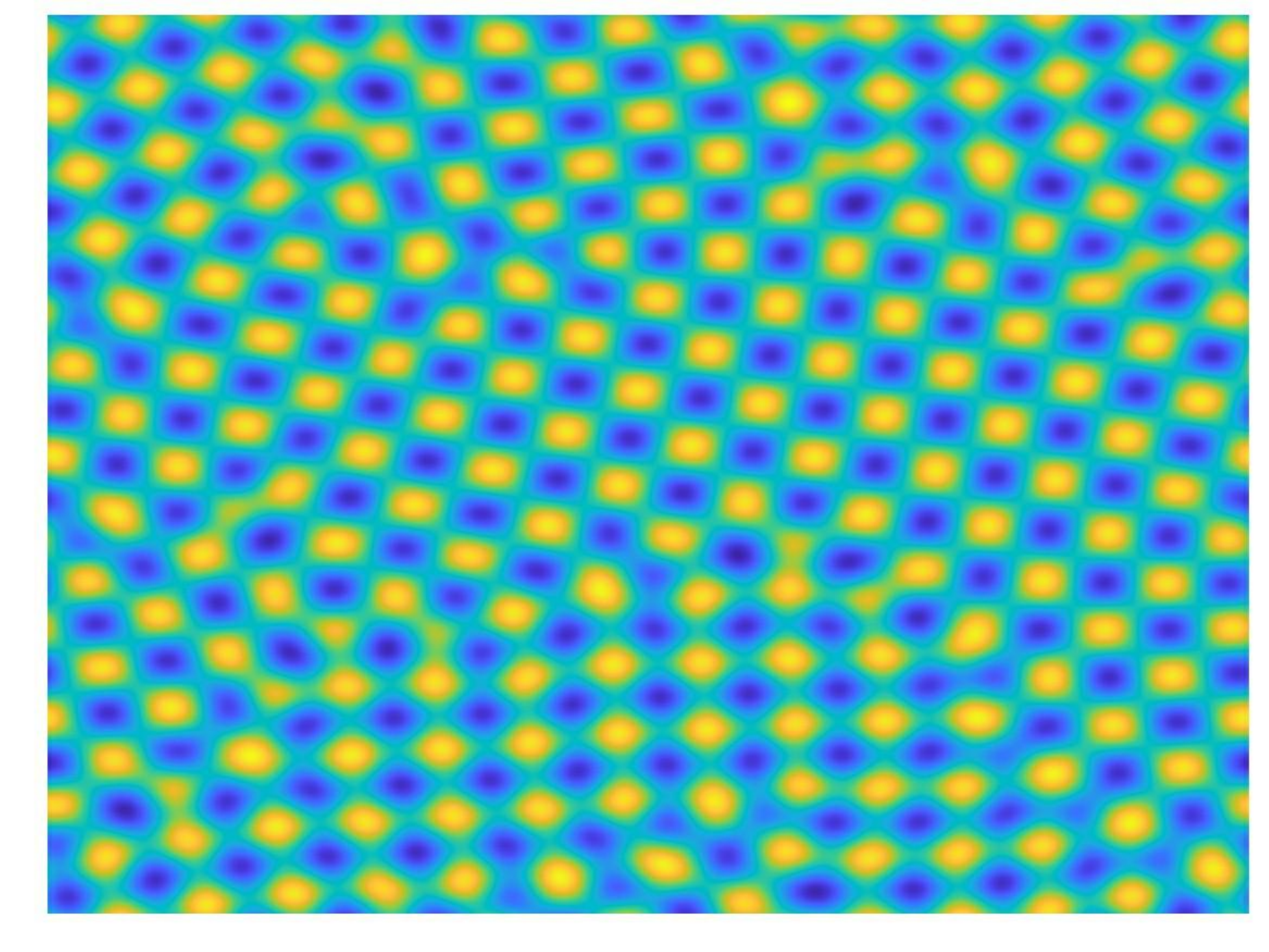}
			
\caption*{$t=15000, 21000$}
		
	\end{subfigure}
\caption{Time snapshots of the evolution for squared phase field crystal model, with random initial perturbation. The time sequence for the snapshots is set as $t=10, 20, 40, 80, 100, 200, 500, 1000, 3000, 9000, 15000~ \text{and}~ 21000$. The parameters are $a =0.5, \Omega=[0, 100]^2$.}
	\label{fig:long-time-spfc}
	\end{center}
	\end{figure}

To illustrate the energy stability property of the proposed numerical scheme, we display the energy evolution of the one nucleation site example, up to $t=1000$, in %Figure~\ref{fig:energy evolution}. %Three different time step sizes, $\dt_1 = 0.1$, $\dt_2 =0.05$ and $\dt_3 = 0.025$, have been taken in the comparison.
The solid and dotted plots stand for the time evolution of the original energy functional and the SAV-introduced energy functional, given by formula~\eqref{energy-discrete-spectral} and $\breve{E} (\phi, r) = \frac{a}{2} \| \phi \|_2^2 + \frac12 \|  \Delta_N \phi \|_2^2 + | r |^2$, respectively. The plots overlap so that differences are indistinguishable, and the energy dissipation property is clearly observed in the numerical simulation. This shows that the SAV approach is indeed an accurate numerical approximation to the original physical model.

	\begin{figure}
	\centering
	\includegraphics[width=4.0in]{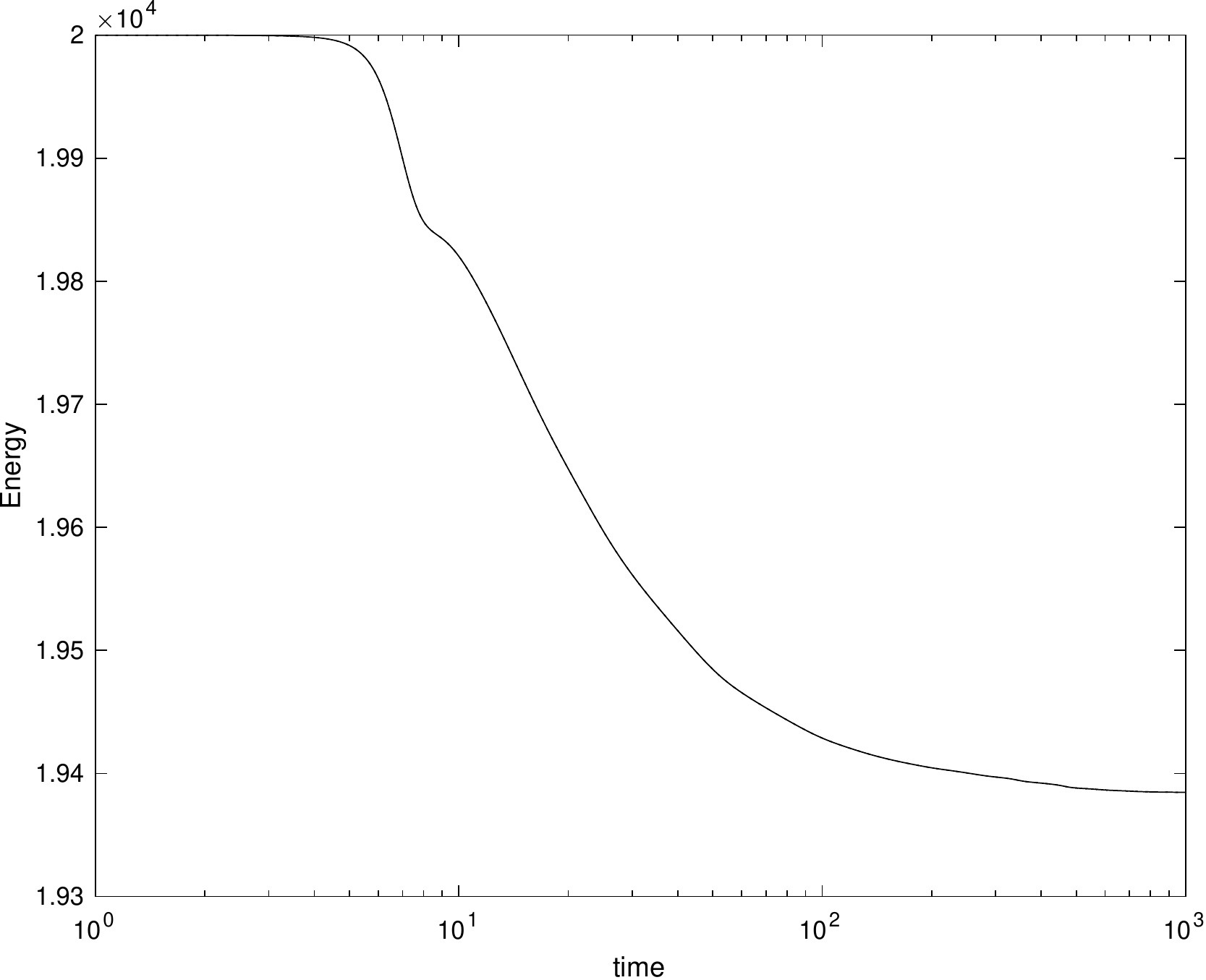}
\caption{Semi-log plot of the temporal evolution the energy up to $t=1000$. The solid and dotted plots stand for the time evolution of the original energy functional and the SAV-introduced energy functional, respectively. The plots overlap so that differences are indistinguishable.}
	\label{fig:energy evolution}
       \end{figure}

Our numerical experiments have also demonstrated that, the SAV numerical scheme works well for the smooth gradient flows, such as the numerical example presented above, with a mild amplitude of random initial perturbation. On the other hand, if a more singular perturbation is included at the initial data, such a nucleation at the center $(50, 50)$, with magnitude of 10, a direct application of the SAV numerical scheme is not able to create a reasonable numerical solution. %In contrast, an implicit numerical scheme proposed in~\cite{cheng2019d} is able to provide very nice numerical simulation results, since such a singular delta function could be quickly smoothed out due to the parabolic nature of the SPFC equation.
Meanwhile, extensive numerical experiments have demonstrated that, a stabilized SAV scheme, with an inclusion of artificial regularization in the form of $- A \dt \Delta_N (\phi^{n+1} - \phi^n)$ (such as the one in the existing work~\cite{cheng2019d}), could overcome such a rough initial data difficulty and produce much nicer numerical results. In general, we conclude that, for smooth gradient flows in which there is no sharp gradient, the SAV scheme has greatly improved the computational efficiency. For a challenging numerical example in which an initial singularity is included, the stabilized SAV approach will overcome the subtle numerical difficulties and be able to enhance the scientific computing performances.
%he SAV face may face serious difficulty in the computational effort.

\section{Concluding remarks} \label{sec:conclusion}

In this article, we have proposed and analyzed an scalar auxiliary variable (SAV)-based numerical scheme  for the square phase field crystal (SPFC) equation, a gradient flow to model the crystal growth. An appropriate decomposition for the physical energy functional is formulated, so that the nonlinear energy part has a well-established global lower bound, and the rest terms lead to constant-coefficient diffusion terms with positive eigenvalues. This overcomes a key difficulty in the application of SAV idea to the SPFC model. In turn, the resulting numerical scheme could be very efficiently implemented by constant-coefficient Poisson-like type solvers (via FFT), and energy stability is established by introducing an auxiliary variable. As a result of this modified energy stability, a uniform in time $H^2$ bound is available for the numerical solution. In addition, we are able to derive a uniform in time $H^3$ bound for the numerical solution, with the help of discrete Sobolev embedding techniques. Such an $H^3$ bound for the numerical solution plays an essential role in the optimal rate convergence analysis in the energy norm, i.e., the error estimate in the $\ell^\infty (0,T; H^2) \cap \ell^2 (0, T; H^5)$ space. A few numerical experiments are presented to demonstrate the efficiency and accuracy of the proposed scheme, including the numerical accuracy test and numerical simulations of square symmetry patterns.

	\section*{Acknowledgements}
This work is supported in part by NSFC 11971047 (Q. Huang) and NSF DMS-2012669 (C.~Wang). %DMS-2012669, NSFC 11271281 (C.~Wang), 	

	\appendix

\section{Proof of Proposition~\ref{prop:elliptic regularity}}
	\label{proof:Prop 2.4}

Due to the periodic boundary condition for $f$ and its cell-centered representation, it has a corresponding discrete Fourier transformation, as the form given by~\eqref{spectral-coll-1}:
	\begin{eqnarray}
f_{i,j,k} = \sum_{\ell,m,n=-K}^{K} \hat{f}_{\ell,m,n}^N \exp \left( 2 \pi {\rm i} ( \ell x_i + m y_j + n z_k ) \right) .
   \label{def:Fourier-1}
	\end{eqnarray}
Then we make its extension to a continuous function:
	\begin{equation}
	\label{def:extension-1}
f_N (x,y,z) = \sum^{K}_{\ell,m,n=-K} \hat{f}^N_{\ell,m,n} \exp \left( 2 \pi {\rm i} ( \ell x + m y + n z ) \right)  .
	\end{equation}
We denote a discrete grid function, $g := {\cal D}_x f$, at a point-wise level. Since $f$ corresponds to $f_N \in {\cal B}^K$ (the space of trigonometric polynomials of degree at most $K$), an application of Parseval identity implies that
\begin{equation}
\begin{aligned}
  &
  \| \nabla_N \Delta_N f \|_2^2  = \| \nabla \Delta f_N \|^2
  =   \sum^{K}_{\ell,m,n=-K}   \lambda_{\ell, m, n}^6 | \hat{f}^N_{\ell,m,n}  |^2  ,
\\
  &
  \| \Delta_N^3 f \|_2^2  = \| \Delta^3 f_N \|^2
  =   \sum^{K}_{\ell,m,n=-K}   \lambda_{\ell, m, n}^{12} | \hat{f}^N_{\ell,m,n}  |^2  ,
\end{aligned}
\label{prop elliptic regularity-1}
\end{equation}
with $ \lambda_{\ell, m, n}$ introduced in~\eqref{spectral-coll-4-b}. Meanwhile, the elliptic regularity for the continuous function $f_N$ indicates that
\begin{eqnarray}
    \| \nabla \Delta f_N \|  \le \hat{C}_0 \| \Delta^3 f_N \| ,  \quad
   \mbox{for some $\hat{C}_0$ only dependent on $\Omega$} .
   \label{prop elliptic regularity-2} 	
\end{eqnarray}
Finally, the discrete elliptic regularity inequality~\eqref{elliptic regularity-0} is a direct combination of~\eqref{prop elliptic regularity-1} and \eqref{prop elliptic regularity-2}. This completes the proof of Proposition~\ref{prop:elliptic regularity}.

	\bibliographystyle{plain}
	\bibliography{revision}

\end{document}